\definecolor{red}{rgb}{1,0,0}
\definecolor{gre}{rgb}{0,0.5,0}
\definecolor{blu}{rgb}{0,0,1}
\definecolor{pur}{rgb}{0.6,0.16,0.9}
\newcommand{\transpose}[1]{\ensuremath{\prescript{t}{}{#1}}}
\newcommand{\e}[0]{\ensuremath{\mathrm{e}}}
\renewcommand{\i}[0]{\ensuremath{\mathrm{i}}}
\newtheorem{theorem}{Theorem}[section]
\newtheorem{corollary}[theorem]{Corollary}
\newtheorem{lemma}[theorem]{Lemma}
\newtheorem{proposition}[theorem]{Proposition}
\theoremstyle{definition}
\numberwithin{equation}{section}
\begin{document}
\title{Bounds for theta sums in higher rank I\footnote{Research supported by EPSRC grant EP/S024948/1}}
\date{School of Mathematics, University of Bristol, Bristol BS8 1UG, U.K.\\[10pt] 22 March 2022}
\author{Jens Marklof and Matthew Welsh}
\maketitle

\begin{abstract}
Theta sums are finite exponential sums with a quadratic form  in the oscillatory phase. This paper establishes new upper bounds for theta sums in the case of smooth and box truncations. This generalises a classic 1977 result of Fiedler, Jurkat and K\"orner  for one-variable theta sums and, in the multi-variable case, improves previous estimates obtained by Cosentino and Flaminio in 2015. Key steps in our approach are the automorphic representation of theta functions and their growth in the cusps of the underlying homogeneous space.   
\end{abstract}

\tableofcontents

\newpage

\section{Introduction}
\label{sec:intro}

Consider the exponential sum
\begin{equation}
  \label{eq:SMXxydef0}
  \theta_f(M, X, \bm{x}, \bm{y}) = \sum_{\bm{m} \in \mathbb{Z}^n} f(M^{-1} (\bm{m}+\bm{x})) \; \e\left( \tfrac{1}{2}\, \bm{m} X \transpose{\!\bm{m}} + \bm{m} \transpose{\!\bm{y}}\right) ,
\end{equation}
where $f:\mathbb{R}^n\to \mathbb{R}$ is a rapidly decaying cut-off function, $M\in\mathbb{R}_{>0}$, $X$ a real symmetric $n\times n$  matrix, and $\bm{x}, \bm{y} \in \mathbb{R}^n$ (represented as row vectors). We also use the shorthand $\e(z)=\e^{2\pi\i z}$. 

We refer to $\theta_f$ as a {\em theta sum}. If, for example $f(\bm{x})=\exp(-\pi \bm{x} P \transpose{\!\bm{x}} )$ for some positive definite matrix $P$, we obtain the classical Siegel theta series
\begin{equation}
  \label{eq:SMXxydefSIEGEL}
  \theta_f(M, X, \bm{0}, \bm{y}) = \sum_{\bm{m} \in \mathbb{Z}^n}\e\left( \tfrac{1}{2}\, \bm{m} Z \transpose{\!\bm{m}} + \bm{m} \transpose{\!\bm{y}}\right) ,
\end{equation}
with $Z= X+\i Y$ and $Y= M^{-2} P$.
If, on the other hand, $f=\chi_\mathcal{B}$ is the characteristic function of a bounded set $\mathcal{B}\subset \mathbb{R}^n$ we have the finite sum
\begin{equation}
  \label{eq:SMXxydef}
  \theta_f(M, X, \bm{x}, \bm{y}) = \sum_{\bm{m} \in \mathbb{Z}^n \cap (M\mathcal{B}-\bm{x})} \e\left( \tfrac{1}{2}\, \bm{m} X \transpose{\!\bm{m}} + \bm{m} \transpose{\!\bm{y}}\right) .
\end{equation}
In this case we will also use the notation $\theta_f=\theta_{\mathcal{B}}$.  

The following theorem, which is our first main result, gives an upper bound on the values of theta sums in the limit of large $M$, when the truncation function is in the class of complex-valued Schwartz functions $\mathcal{S}(\mathbb{R}^n)$ . 

\begin{theorem}
  \label{theorem:thetasumboundssmooth}
  Fix $f\in \mathcal{S}(\mathbb{R}^n)$ and
  let $\psi : [0,\infty ) \to [1, \infty)$ be an increasing function such that series
  \begin{equation}
    \label{eq:psiconvergence}
    \sum_{k \geq 0} \psi(k)^{-(2n+2)} 
  \end{equation}
  converges.
  Then there exists a subset $\mathcal{X}(\psi) \subset \mathbb{R}^{n\times n}_{\mathrm{sym}}$ of full Lebesgue measure such that
  for $M\geq 1$, $X \in \mathcal{X}(\psi)$, $\bm{x},\bm{y}\in \mathbb{R}^n$ we have
   \begin{equation}
    \label{eq:thetasumboundsmooth1}
    \theta_f(M, X, \bm{x}, \bm{y})  = O_{f,X}\big(M^{\frac{n}{2}} \psi(\log M) \big)  
  \end{equation}
  The implied constants in \eqref{eq:thetasumboundsmooth1} are independent of $M$, $\bm{x}$ and $\bm{y}$.
\end{theorem}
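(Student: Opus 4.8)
The plan is to pass to the automorphic picture, where $\theta_f$ becomes, up to an elementary factor, the value of a single function on a homogeneous space, and then to reduce the bound to a metric Diophantine statement about $X$ by means of reduction theory. Let $G$ be the semidirect product of the metaplectic group $\widetilde{\mathrm{Sp}}(2n,\mathbb{R})$ with the Heisenberg group $\mathrm{H}$, let $\Gamma<G$ be the standard arithmetic lattice, and let $R$ be the Schr\"odinger--Weil representation on $L^2(\mathbb{R}^n)$, which preserves $\mathcal{S}(\mathbb{R}^n)$; set $\Theta_f(g)=\sum_{\bm{m}\in\mathbb{Z}^n}(R(g)f)(\bm{m})$. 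Then $\Theta_f$ is left $\Gamma$-invariant, so $|\Theta_f|$ descends to $\Gamma\backslash G$, and there is an explicit element $g=g(M,X,\bm{x},\bm{y})\in G$ --- whose $\widetilde{\mathrm{Sp}}$-component acts on the Siegel upper half space $\mathcal{H}_n$ by sending $\i\mathbbm{1}$ to $Z:=X+\i M^{-2}\mathbbm{1}$, and whose Heisenberg component records $(\bm{x},\bm{y})$ --- such that $\theta_f(M,X,\bm{x},\bm{y})=M^{n/2}\,\Theta_f(g(M,X,\bm{x},\bm{y}))$. Since $(\bm{x},\bm{y})$ enters only the Heisenberg component, and the Heisenberg directions of $\Gamma\backslash G$ form a compact nilmanifold fibre, every estimate for $|\Theta_f|$ below will automatically be uniform in $\bm{x},\bm{y}$; so it suffices to bound $|\Theta_f(g(M,X,\bm{0},\bm{0}))|$ by $O_{f,X}(\psi(\log M))$ for $X$ in a set of full Lebesgue measure.

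Next I would use the growth of $\Theta_f$ in the cusps of $\Gamma\backslash G$. The weight-$\tfrac12$ modular transformation of the Siegel theta function, together with the rapid decay of $f$ --- which controls the difference between $\theta_f$ and the Gaussian-truncated theta at $Z$, uniformly in the theta characteristics $\bm{x},\bm{y}$ --- yields
\[
|\Theta_f(g)|\;\ll_f\;\mathcal{H}(g)^{1/4},\qquad
\mathcal{H}(g):=\max_{\gamma\in\mathrm{Sp}(2n,\mathbb{Z})}\det\!\bigl(\operatorname{Im}(\gamma Z)\bigr),
\]
a well-defined $\Gamma$-invariant ``height'' on $\Gamma\backslash G$ independent of the Heisenberg part. (For $n=1$ this is the classical fact that $y^{1/4}|\vartheta_3(x+\i y)|$ is bounded on the fundamental domain of the theta group.) Hence it remains to prove $\mathcal{H}(g(M,X,\cdot,\cdot))=O_X(\psi(\log M)^4)$ for $M\geq1$ and $X$ outside a null set.

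The heart of the matter is then a metric Diophantine estimate. Since $\det(\operatorname{Im}\gamma Z)=\det(\operatorname{Im}Z)/|\!\det(CZ+D)|^2=M^{-2n}/|\!\det(CX+D+\i M^{-2}C)|^2$ for $\gamma$ with bottom row $(C,D)$, the height $\mathcal{H}(g(M,X,\cdot,\cdot))$ exceeds a threshold $S$ precisely when $X$ lies close --- in a sense rendered anisotropic by the quadratic form $H\mapsto\det(H+\i M^{-2}\mathbbm{1})$ --- to a rational symmetric matrix $-C^{-1}D$ of controlled denominator. For $M$ in a dyadic block $[\mathrm{e}^{k},\mathrm{e}^{k+1}]$ and $S=c\,\psi(k)^4$ one then has to bound the Lebesgue measure of the union of these anisotropic neighbourhoods. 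Doing this carefully --- uniformly over \emph{all} rational boundary components of $\Gamma\backslash G$, not just the deepest zero-dimensional cusps, and over all denominator/elementary-divisor types, exploiting the weighting by the discriminant $\prod_{i<j}|h_i-h_j|$ in the eigenvalue coordinates of $X$ --- is the step I expect to be the main obstacle; it is here that the multidimensional geometry of numbers replaces the one-dimensional continued-fraction input of Fiedler--Jurkat--K\"orner. One obtains a bound of order $S^{-(n+1)/2}=\psi(k)^{-(2n+2)}$ for this measure, the exponent arising as $4\times\tfrac{n+1}{2}$, with the $\tfrac14$ from the cusp-growth exponent of $\Theta_f$ and the $\tfrac{n+1}{2}$ from the volume of the cusp neighbourhood $\{\mathcal{H}>S\}$ --- equivalently $2n+2=\binom{n+1}{2}\cdot\tfrac{4}{n}$ with $\binom{n+1}{2}=\dim\mathbb{R}^{n\times n}_{\mathrm{sym}}$.

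Finally, since $\sum_k\psi(k)^{-(2n+2)}<\infty$ by \eqref{eq:psiconvergence}, the Borel--Cantelli lemma produces a full-measure set $\mathcal{X}(\psi)\subset\mathbb{R}^{n\times n}_{\mathrm{sym}}$ such that for $X\in\mathcal{X}(\psi)$ one has $\mathcal{H}(g(M,X,\cdot,\cdot))\ll_X\psi(\log M)^4$ for all $M\geq1$; using the monotonicity of $\psi$ to pass from the dyadic blocks back to all $M$, and combining with the reduction and the growth bound above, this gives \eqref{eq:thetasumboundsmooth1} with implied constant independent of $M$, $\bm{x}$ and $\bm{y}$.
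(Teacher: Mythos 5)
Your overall architecture matches the paper's up to the decisive step: the identity $\theta_f(M,X,\bm{x},\bm{y})=M^{\frac{n}{2}}\Theta_f(h,g_{M,X})$, the cusp bound $|\Theta_f|\ll_f D(\Gamma g)^{\frac14}$ with $D(\Gamma g)=\max_{\gamma}\det\big(\mathrm{Im}(\gamma Z)\big)$ (this is corollary \ref{corollary:upperbound} combined with the automorphy theorem \ref{theorem:thetaautomorphy}, which is also what makes everything uniform in $\bm{x},\bm{y}$), and a Borel--Cantelli argument with the exponent $4\cdot\frac{n+1}{2}=2n+2$. Note, though, that for a general Schwartz $f$ the cusp bound is not just ``weight $\tfrac12$ modularity plus rapid decay'': one needs the uniform control of $f_Q=R(k(Q))f$ over $Q\in\mathrm{U}(n)$ (lemma \ref{lemma:uniformSchwartz}) and the geometry of the fundamental domain (proposition \ref{proposition:Dnproperties}); citing it as a black box is acceptable, but it is not a one-liner.

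The genuine gap is at the measure estimate, which in your route is the heart of the proof and is asserted rather than proved. You propose to bound, for $M$ in a dyadic block and uniformly over the block, the Lebesgue measure in $X$ of the set where $\max_{(C,D)}\det(\mathrm{Im}\,\gamma Z)>S$, by counting anisotropic neighbourhoods of the rational symmetric matrices $-C^{-1}D$ over all denominator and elementary-divisor types, and you explicitly flag this counting as ``the main obstacle'' while simply claiming the answer $\ll S^{-\frac{n+1}{2}}$. For $n=1$ this is the classical Farey-interval computation, but for $n>1$ the union over all coprime symmetric pairs $(C,D)$, their overlaps, and the anisotropy coming from $\det(CX+D+\i M^{-2}C)$ require precisely the reduction-theoretic work whose avoidance is the point of the method used in the paper. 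The paper never performs this counting in $X$-space: it proves the Haar-measure cusp-volume bound $\mu(\{D\geq R\})\ll R^{-\frac{n+1}{2}}$ on $\Gamma\backslash G$ (lemma \ref{lemma:volumecalculation}), uses invariance of Haar measure under right translation by the diagonal flow together with the height-continuity lemma \ref{lemma:heightcontinuity} to run the (easy half of the) Kleinbock--Margulis Borel--Cantelli argument on $\Gamma\backslash G$, and only then transfers the resulting full-measure statement to the unstable foliation parametrized by $X$, via the $(X,A,T)$ coordinates and the Haar-measure factorization \eqref{eq:XATHaar}, Fubini, and lemma \ref{lemma:heightcontinuity2}, exploiting that the $T$-direction is contracted by the flow (the $M^{-2}T$ factor in \eqref{eq:grearrange} specialised to $\bm{j}=0$, $B=I$). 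To complete your proposal you must either carry out the multidimensional Diophantine counting uniformly over all $(C,D)$ and over $M$ in each block (also handling the union over the continuum of $M$ in a block, which the paper handles with lemma \ref{lemma:heightcontinuity}), or replace it by this thickening-and-Fubini transfer; as written, the key quantitative input is missing.
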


This theorem follows from a geometric representation of $\theta_f$ as an automorphic function and an application of a dynamical Borel-Cantelli lemma for flows on homogeneous spaces, theorem 1.7 in \cite{KleinbockMargulis1999}. The special case of theorem \ref{theorem:thetasumboundssmooth} for general smooth theta sums in one variable was considered in \cite{Marklof2007}. 

Upper bounds for smooth multi-variable theta sums, with an additional linear average in $X$, have played an important role in understanding the value distribution of quadratic forms, see for example the work of G\"otze \cite{Gotze2004}, Buterus, G\"otze, Hille and Margulis \cite{Buterus2019} and the first named author \cite{Marklof2002,Marklof2003}.

The second main result of this paper deals with the subtler case when $f$ is the characteristic function of a rectangular box $\mathcal{B}$. In this setting Cosentino and Flaminio \cite{ConsentinoFlaminio2015} established the bound 
\begin{equation}\label{CF}
\theta_\mathcal{B}(M, X, \bm{0}, \bm{y})= O_{X,\epsilon}\big(M^{\frac{n}{2}} (\log M)^{n+\frac{1}{2n+2}+\epsilon}\big)
\end{equation}
for the unit cube $\mathcal{B}=[0,1]^n$, any $\epsilon>0$ and almost every $X$. The following theorem improves on this by a factor of $(\log M)^n$ and produces a uniform bound for rectangular boxes of the form $\mathcal{B}=(0,b_1)\times\cdots\times(0,b_n)$, with $b_i\in\mathbb{R}_{>0}$ ranging over compacta.

\begin{theorem}
  \label{theorem:thetasumbounds}
  Fix a compact subset $\mathcal{K}\subset\mathbb{R}_{>0}^n$, and choose
  $\psi$ as in theorem \ref{theorem:thetasumboundssmooth}.
  Then there exists a subset $\mathcal{X}(\psi) \subset \mathbb{R}^{n\times n}_{\mathrm{sym}}$ of full Lebesgue measure such that
  \begin{equation}
    \label{eq:thetasumbound}
    \theta_\mathcal{B}(M, X, \bm{x}, \bm{y})  = O_X \big( M^{\frac{n}{2}} \psi(\log M) \big)
  \end{equation}
  for all $M\geq 1$, $\bm{b}=(b_1,\ldots,b_n)\in\mathcal{K}$, $X \in \mathcal{X}(\psi)$, $\bm{x},\bm{y}\in \mathbb{R}^n$. 
   The implied constants are independent of $M$, $\bm{b}$, $\bm{x}$ and $\bm{y}$.
\end{theorem}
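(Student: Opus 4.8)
The plan is to prove Theorem~\ref{theorem:thetasumbounds} by running, for the box truncation, the very machinery behind Theorem~\ref{theorem:thetasumboundssmooth} — the automorphic representation of theta functions together with the dynamical Borel--Cantelli lemma of \cite{KleinbockMargulis1999} — the new point being that this machinery survives the replacement of a Schwartz cut-off by $\chi_\mathcal{B}$. The first step is to record that for $\mathcal{B}=(0,b_1)\times\cdots\times(0,b_n)$ there is an identity of the form
\[
 \theta_\mathcal{B}(M,X,\bm x,\bm y)=M^{\frac n2}\,\Theta_{\chi_\mathcal{B}}\!\left(g_M\,n(X,\bm x,\bm y)\,\Gamma\right),
\]
where $\Gamma\backslash G$ is the homogeneous space used in the proof of Theorem~\ref{theorem:thetasumboundssmooth} (built from the metaplectic group and a Heisenberg group), $g_M$ the associated diagonal flow, $n(X,\bm x,\bm y)$ a unipotent, and $\Theta_h$ the theta function attached to a vector $h$ in the Schr\"odinger model of the Weil representation; for $h\in\mathcal{S}(\mathbb R^n)$ this is exactly the representation underlying Theorem~\ref{theorem:thetasumboundssmooth}. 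The content of the first step is that the right-hand side still makes sense for $h=\chi_\mathcal{B}\in L^2(\mathbb R^n)\setminus\mathcal{S}(\mathbb R^n)$: $\Theta_{\chi_\mathcal{B}}$ is a well-defined $\Gamma$-automorphic function, only with faster growth towards the cusps than in the Schwartz case.

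The second and crucial step is a \emph{quantitative} bound for that growth, of the same shape as in the smooth case: I would show that $\Theta_{\chi_\mathcal{B}}$ obeys a polynomial height bound $|\Theta_{\chi_\mathcal{B}}(g\Gamma)|\le C\,(1+\operatorname{ht}(g\Gamma))^{\kappa}$ with the \emph{same} exponent $\kappa=\kappa(n)$ as Schwartz vectors, the constant $C$ being uniform over $\bm b\in\mathcal{K}$ and over the Heisenberg variables (hence over $\bm x,\bm y$). The route I would take is a comparison of $\chi_\mathcal{B}$ with smooth vectors at geometrically decreasing scales: writing, coordinate by coordinate, $\chi_{(0,b_i)}$ as a smooth function agreeing with it outside a boundary layer of relative width $\epsilon$ (for a small fixed $\epsilon=\epsilon(n)$) plus a remainder supported in that layer, and iterating while re-expressing each boundary remainder through the one-variable telescoping identity implicit in Fiedler--Jurkat--K\"orner (a boundary sum over an interval of length $L$ is again a theta sum over an interval of length $\epsilon L$, with affinely modified $\bm x,\bm y$, the same $X$, and at worst an extra unimodular factor, after reflecting at the far face). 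This exhibits $\Theta_{\chi_\mathcal{B}}$, near any cusp, as a \emph{geometrically} convergent series of contributions of smooth theta functions on boxes with side lengths shrinking by $\epsilon$ at each step; because the series converges geometrically rather than merely termwise, the accumulated cusp growth is no worse than for a single Schwartz vector, and in particular log-free — this is precisely what will yield the gain of $(\log M)^n$ over \eqref{CF}, which loses a $\log M$ per coordinate by handling them one at a time. The corner contributions, in which one coordinate remains macroscopic while another is reduced to $O(1)$ summands, are split off and estimated by induction on $n$ (trivially bounding the short coordinate reduces them to $(n-1)$-variable box sums); since a coordinate projection carries Lebesgue measure to Lebesgue measure, the relevant full-measure sets intersect to one of full measure.

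With this height bound in place, the proof closes as for Theorem~\ref{theorem:thetasumboundssmooth}: by Theorem~1.7 of \cite{KleinbockMargulis1999}, applied to the orbit $t\mapsto g_{\e^{t}}\,n(X,\bm x,\bm y)\,\Gamma$, there is a full-measure set $\mathcal{X}(\psi)$ of matrices $X$ for which the excursions of this orbit into the cusps grow no faster than is allowed by the convergence of $\sum_k\psi(k)^{-(2n+2)}$; substituting this excursion rate into the height estimate for $\Theta_{\chi_\mathcal{B}}$ gives $\theta_\mathcal{B}(M,X,\bm x,\bm y)=O_X(M^{\frac n2}\psi(\log M))$ for $X\in\mathcal{X}(\psi)$. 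Uniformity in $\bm x,\bm y$ is built in — these variables translate the base point only along the compact Heisenberg fibre — and uniformity in $\bm b\in\mathcal{K}$ follows because the constants in the telescoping depend on $\bm b$ only through a compact range of side lengths and because $\psi$ is increasing. Intersecting $\mathcal{X}(\psi)$ with the finitely many sets coming from the corner/induction step leaves a set of full measure.

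The step I expect to be the main obstacle is the height bound for $\Theta_{\chi_\mathcal{B}}$, i.e.\ the control of the non-Schwartz theta vector. The delicate point is quantitative: the smooth pieces that appear $\ell$ iterations deep in the telescoping are bumps at scale $\epsilon^{\ell}$, so in the Weil representation they arise from a fixed bump by a diagonal element $d_\ell$ of size $\epsilon^{-O(\ell)}$; the bound $|\Theta_{d_\ell\cdot\mathrm{bump}}(g\Gamma)|\ll\epsilon^{\ell/2}(1+\operatorname{ht}(g\Gamma))^{\kappa}\,\epsilon^{-O(\ell)\kappa}$ that this produces must still be summable against the $\sim 2^{\ell}$ terms at depth $\ell$, which forces a genuine estimate — in the reduction theory of the metaplectic group — showing that the cusp excursions relevant to a box are of a mild, low-rank type, so that the implicit exponent is small enough for the geometric series to converge; a crude bound breaks down here, and getting the right $\epsilon$-dependence is the technical core. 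A secondary difficulty is keeping the constant in the telescoping uniform as the number of iterations grows. Once these are secured, the remainder — the Poisson-summation/automorphic bookkeeping behind the displayed identity, the affine changes of $(\bm x,\bm y)$ from the far-face reflections, and assembling the full-measure set — is routine.
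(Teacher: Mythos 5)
Your overall scaffolding (the identity $\theta_\mathcal{B}=M^{n/2}\Theta(\cdot)$, the use of a cusp-height bound plus a Kleinbock--Margulis type measure estimate, the induction on $n$ for the ``corner'' terms, and the passage from almost every $\Gamma g$ to almost every $X$ along the unstable leaf) matches the paper. But your central step --- a \emph{pointwise} bound $|\Theta_{\chi_\mathcal{B}}(g\Gamma)|\ll(1+\mathrm{ht})^{\kappa}$ with the \emph{same} exponent as for Schwartz vectors, uniform in $\bm{x},\bm{y},\bm{b}$, followed by a single application of theorem 1.7 of \cite{KleinbockMargulis1999} to the one-parameter orbit --- is precisely what the paper does not prove and deliberately avoids, and your sketch of how to obtain it does not close. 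Even the well-definedness you assert is delicate: for the sharp cutoff, $R(g)\chi_\mathcal{B}$ decays only like a product of first powers, so the defining series for $\Theta_{\chi_\mathcal{B}}$ is not absolutely convergent at general points, and the paper only works with the dyadic surrogate $\tilde{\Theta}_\chi$ in (\ref{eq:thetachidef}), whose convergence is established for almost every $g$ only. More importantly, your telescoping bookkeeping is exactly borderline: a bump at dyadic depth $\bm{j}$ enters with coefficient $2^{-\frac12(j_1+\cdots+j_n)}$, while right translation by $g_{\bm{j},S}$ can increase the height by as much as $(\det A_{\bm{j}})^{2}=2^{2(j_1+\cdots+j_n)}$, so corollary \ref{corollary:upperbound} gives for the depth-$\bm{j}$ piece at worst $2^{-\frac12|\bm{j}|}\bigl(2^{2|\bm{j}|}D(\Gamma g)\bigr)^{1/4}=D(\Gamma g)^{1/4}$ with no gain per scale; summed over infinitely many scales this diverges pointwise. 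So no purely pointwise argument of the shape you propose can deliver the log-free bound with the Schwartz exponent; this is the ``genuine estimate'' you flag as the obstacle, and it is not supplied.

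The paper's way around this is measure-theoretic rather than pointwise, and this is the idea missing from your proposal. After the dyadic decomposition (lemma \ref{lemma:cutoff} and (\ref{eq:chirepdecompbbb})), one \emph{allows} the height bound to degrade by $2^{\frac14(j_1+\cdots+j_n)}$ at scale $\bm{j}$: the sets $\mathcal{G}_{\bm{j}}(\psi,C2^{\frac14|\bm{j}|})$ and $\mathcal{X}_{\bm{j}}(\psi,C2^{\frac14|\bm{j}|})$ are defined scale by scale, and lemma \ref{lemma:thetajbound} (the easy Borel--Cantelli direction, run separately for each of the $n$-parameter family of translates $g_{\bm{j},S}$, together with lemmas \ref{lemma:heightcontinuity} and \ref{lemma:heightcontinuity2}) shows the exceptional set at scale $\bm{j}$ has measure $\ll C^{-(2n+2)}2^{-\frac{n+1}{2}|\bm{j}|}$, which is summable in $\bm{j}$; hence almost every $X$ lies in all these sets simultaneously. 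For such $X$ the depth-$\bm{j}$ contribution is $\ll 2^{-\frac12|\bm{j}|}\cdot C2^{\frac14|\bm{j}|}\psi(\log M)$, and the geometric convergence you hoped to get pointwise is recovered only after this almost-everywhere, per-scale input; the terms with $2^{j_i}>b_iM$ are split off and handled by induction on $n$ as you suggest. This is exactly the ``complication arising from the $n$-parameter flow'' that the introduction says prevents a straightforward application of Kleinbock--Margulis, so the single-orbit application you envisage cannot be the last step without first proving the uniform pointwise bound --- which remains the unfilled gap in your proposal.
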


To compare this with the bound obtained in \cite{ConsentinoFlaminio2015}, note that $\psi(t) = t^{\frac{1}{2n + 2} + \epsilon}$ satisfies (\ref{eq:psiconvergence}) and thus resulting bound (\ref{eq:thetasumbound}) indeed improves \eqref{CF} by a factor of $( \log M)^n$.
The paper \cite{ConsentinoFlaminio2015} also established the stronger bound 
\begin{equation}
\theta_\mathcal{B}(M, X, \bm{x}, \bm{y})  = O_X \big( M^{\frac{n}{2}}\big)
\end{equation}
for ``bounded-type'' $X$ that are badly approximable by rationals (these form a set of measure zero), and weaker bounds for $X$ that satisfy more relaxed Diophantine conditions. These same bounds can also be obtained from our techniques, but with no further improvements.

In the case $n = 1$ our estimate \eqref{eq:thetasumbound} matches the optimal results found by Fiedler, Jurkat and K\"orner \cite{FiedlerJurkatKorner1977}. For $n>1$, obtaining the lower bounds in these papers (which follow from the harder part of the Borel-Cantelli lemma) is more subtle, and we hope to develop an approach to this elsewhere.

The bounds for the theta sum in \eqref{theorem:thetasumbounds} and \eqref{eq:thetasumboundsmooth1} are uniform in the shift $\bm{x}$ and the linear phase $\bm{y}$.
In forthcoming work \cite{MarklofWelsh2021b} we will consider improved bounds valid for almost all $( X, \bm{x}, \bm{y}) \subset \mathbb{R}^{n\times n}_{\mathrm{sym}} \times \mathbb{R}^n \times \mathbb{R}^n$, generalising the results for $n = 1$ found in \cite{FedotovKlopp2012}.

This paper is organised as follows. We begin in section \ref{sec:groups} by recalling some basic facts about the Heisenberg group and symplectic group $\mathrm{Sp}(n, \mathbb{R})$ as well as their semi-direct product, the Jacobi group, including the Iwasawa decomposition, Haar measure, and parabolic subgroups.
We then review the Schr\"odinger and Segal-Shale-Weil representations of the Heisenberg and symplectic group, respectively.
Following the method of \cite{LionVergne1980}, these representations are used to define theta functions in section \ref{sec:theta}.

The theta functions satisfy an automorphy condition on a certain, morally-speaking discrete subgroup of the Jacobi group.
This subgroup is discussed in section \ref{sec:Gamma}.
Its projection to the symplectic group is just the integral symplectic group $\mathrm{Sp}(n, \mathbb{Z})$.
The bulk of section \ref{sec:Gamma} concerns a fundamental domain (a slight modification of Siegel's classic fundamental domain \cite{Siegel1943} based on the work of \cite{Grenier1988}) and its properties.

In section \ref{sec:theta} we define the theta functions and state their automorphy properties before analysing their asymptotic behaviour.
While for the proof of theorems \ref{theorem:thetasumboundssmooth}   and \ref{theorem:thetasumbounds}  we only need the upper bound contained in corollary \ref{corollary:upperbound}, the full asymptotics contained in theorem \ref{theorem:cuspasymptotics} may be of independent interest.
The proof of theorem \ref{theorem:cuspasymptotics} combines the properties of the fundamental domain constructed in section \ref{sec:Gamma} and basic estimates for sums over integers together with the Langlands decompositions of the maximal parabolic subgroups of the symplectic group.

We prove  theorems  \ref{theorem:thetasumboundssmooth} and \ref{theorem:thetasumbounds} in section \ref{sec:mainproof}.
Apart from the upper bound in corollary \ref{corollary:upperbound}, our method relies on upper bounds for the measure of rapidly diverging orbits under a particular one-parameter diagonal action in the symplectic group as well as (for theorem \ref{theorem:thetasumbounds}) a resolution of the singular cutoff function in (\ref{eq:SMXxydef}) using an $n$-parameter diagonal action.
The estimates for the first part are largely based on the easy part of the proof of theorem 1.7 in \cite{KleinbockMargulis1999}, which is also a main input into the method in \cite{ConsentinoFlaminio2015}.
The complications arising from the $n$-parameter flow however prevent a straightforward application of this theorem, so we instead proceed more directly with a self-contained proof.

\section{Heisenberg, symplectic, and Jacobi groups}
\label{sec:groups}

We define the  ($2n+1$)-dimensional Heisenberg group $H$ to be the set $\mathbb{R}^n \times \mathbb{R}^n \times \mathbb{R}$ with multiplication given by
\begin{equation}
  \label{eq:multiplication}
  (\bm{x}_1, \bm{y}_1, t_1)(\bm{x}_2, \bm{y}_2, t_2) = \big(\bm{x}_1 + \bm{x}_2, \bm{y}_1 + \bm{y}_2, t_1 + t_2 + \tfrac{1}{2}( \bm{y}_1 \transpose{\!\bm{x}_2} - \bm{x}_1\transpose{\!\bm{y}_2})\big). 
\end{equation}
The rank $n$ symplectic group $G = \mathrm{Sp}(n, \mathbb{R})$ is defined by
\begin{equation}
  \label{eq:Gdef}
  G = \{ g \in \mathrm{GL}(2n, \mathbb{R}) : g J_0 \transpose{\!g} = J_0 \}
\end{equation}
where
\begin{equation}
  \label{eq:J0def}
  J_0 =
  \begin{pmatrix}
    0 & -I \\
    I & 0
  \end{pmatrix}
\end{equation}
with $I$ the $n\times n$ identity matrix.
We have the alternative characterization
\begin{equation}
  \label{eq:Gdef2}
  G = \left\{
    \begin{pmatrix}
      A & B \\
      C & D
    \end{pmatrix}
    : A\transpose{\!B} = B\transpose{\!A},\ C\transpose{\!D} = D\transpose{\!C},\ A\transpose{\!D} - B \transpose{\!C} = I \right\}. 
\end{equation}

The group $G$ acts by on $H$ via
\begin{equation}
  \label{eq:GHaction}
  ( \bm{x}, \bm{y}, t)^g = ( \bm{x} A + \bm{y}C, \bm{x} B + \bm{y}D, t)
\end{equation}
where
\begin{equation}
  \label{eq:ABCD}
  g =
  \begin{pmatrix}
    A & B \\
    C & D
  \end{pmatrix}
  .
\end{equation}
Since $g$ preserves the symplectic form $J_0$ used to define the multiplication (\ref{eq:multiplication}), this action is by automorphisms, i.e. $(h_1h_2)^g = h_1^g h_2^g$.
We define the semi-direct product group $H \rtimes G$, called the Jacobi group, to be the set of all $(h, g)$, $h \in G$ and $g \in G$, with multiplication given by
\begin{equation}
  \label{eq:HGmultiplication}
  (h_1, g_1)(h_2, g_2) = (h_1 h_2^{g_1^{-1}}, g_1 g_2). 
\end{equation}

\subsection{Iwasawa decomposition and Haar measure}
\label{sec:iwasawa}

The intersection $K = G \cap \mathrm{O}(2n)$ is a maximal compact subgroup of $G$ and
\begin{equation}
  \label{eq:Kparametrization}
  Q \mapsto k(Q) = 
  \begin{pmatrix}
    \mathrm{Re}(Q) & - \mathrm{Im}(Q) \\
    \mathrm{Im}(Q) & \mathrm{Re}(Q)
  \end{pmatrix}
\end{equation}
defines an isomorphism from the unitary group $\mathrm{U}(n)$ to $K$. 
The Iwasawa decomposition of $G$ with respect to $K$ implies that any $g \in G$ can be written uniquely as
\begin{equation}
  \label{eq:Iwasawadecomp}
  g =
  \begin{pmatrix}
    A & B \\
    C & D
  \end{pmatrix}
  =
  \begin{pmatrix}
    I & X \\
    0 & I
  \end{pmatrix}
  \begin{pmatrix}
    Y^{\frac{1}{2}} & 0 \\
    0 & \transpose{Y}^{-\frac{1}{2}}
  \end{pmatrix}
  k(Q)
\end{equation}
where $X$ and $Y$ are symmetric, $Y$ is positive definite, and $Q \in \mathrm{U}(n)$.
Here we have chosen $Y^{\frac{1}{2}}$ to by upper-triangular with positive diagonal entries, and we often further decompose $Y = U V \transpose{U}$ with $U$ upper-triangular unipotent and $V$ positive diagonal.
We also note that $Y^{-\frac{1}{2}}$ is always interpreted as $(Y^{\frac{1}{2}})^{-1}$, not $ (Y^{-1})^{\frac{1}{2}}$. 
We make frequent use of the following expressions for the $X$, $Y$, and $Q$ coordinates,
\begin{align}
  \label{eq:XYQ}
  Y & = ( C\transpose{\!C} + D \transpose{\!D})^{-1} \nonumber \\
  X & = (A \transpose{\!C} + B \transpose{\!D})( C\transpose{\!C} + D \transpose{\!D})^{-1} \nonumber \\
  Q & = ( C\transpose{\!C} + D \transpose{\!D})^{-\frac{1}{2}} ( D + i C),
\end{align}
where as before $( C\transpose{\!C} + D \transpose{\!D})^{\frac{1}{2}}$ is chosen to be upper-triangular with positive diagonal entries.

The Haar measure on $G$ can be easily expressed in terms of the Iwasawa decomposition.
For
\begin{equation}
  \label{eq:gXUVQ}
  g =
  \begin{pmatrix}
    I & X \\
    0 & I
  \end{pmatrix}
  \begin{pmatrix}
    U V^{\frac{1}{2}} & 0 \\
    0 & \transpose{U}^{-1} V^{-\frac{1}{2}}
  \end{pmatrix}
  k(Q),
\end{equation}
the Haar measure $\mu$ on $G$ is given by
\begin{equation}
  \label{eq:Haarmeasure}
  \dd \mu(g) = \left( \prod_{1\leq i \leq j\leq n} \dd x_{ij} \right) \left( \prod_{1 \leq i < j \leq n} \dd u_{ij} \right) \left( \prod_{1 \leq j \leq n} v_{jj}^{-n + j -2} \dd v_{jj} \right) \dd Q.
\end{equation}
Here $\dd Q$ denotes the Haar measure on $\mathrm{U}(n)$ and $\dd x_{ij}$, $\dd u_{ij}$, $\dd v_{jj}$ are respectively the Lebesgue measures on the entries of $X$, $U$, $V$.

We note that if $g =
\begin{pmatrix}
  A & B \\
  C & D
\end{pmatrix}
$ with $D$ invertible, then we can write
\begin{equation}
  \label{eq:LUdecomp}
  g =
  \begin{pmatrix}
    I & BD^{-1} \\
    0 & I 
  \end{pmatrix}
  \begin{pmatrix}
    \transpose{\!D}^{-1} & 0 \\
    0 & D
  \end{pmatrix}
  \begin{pmatrix}
    I & 0 \\
    D^{-1} C & I
  \end{pmatrix}
  .
\end{equation}
Therefore the set of $g \in G$ having the form
\begin{equation}
  \label{eq:XATvariables}
  g =
  \begin{pmatrix}
    I & X \\
    0 & I 
  \end{pmatrix}
  \begin{pmatrix}
    A & 0 \\
    0 & \transpose{\!A}^{-1} 
  \end{pmatrix}
  \begin{pmatrix}
    I & 0 \\
    T & I
  \end{pmatrix}
\end{equation}
for $X$, $T$ symmetric and $A \in \mathrm{GL}(n, \mathbb{R})$ is open and dense in $G$.
We claim that in these coordinates we have, up to multiplication by a positive constant,
\begin{equation}
  \label{eq:XATHaar}
  \dd \mu(g) = ( \det A )^{-2n -1} \left( \prod_{i \leq j} \dd x_{ij} \right) \left( \prod_{i, j} \dd a_{ij} \right) \left( \prod_{i\leq j} \dd t_{ij} \right).
\end{equation}
where $\dd x_{ij}$, $\dd a_{ij}$, $\dd t_{ij}$ are the Lebesgue measure on the entries of $X$, $A$, $T$.

To verify (\ref{eq:XATHaar}) up to a positive constant it suffices to check that the right side is invariant under left multiplication by generators of $G$.
The invariance under matrices $
\begin{pmatrix}
  I & X_1 \\
  0 & I
\end{pmatrix}
$ with $X_1$ symmetric is obvious, and the invariance under matrices $
\begin{pmatrix}
  A_1 & 0 \\
  0 & \transpose{\!A}_1^{-1}
\end{pmatrix}
$ follows from
\begin{equation}
  \label{eq:A1invariance}
  \begin{pmatrix}
    A_1 & 0 \\
    0 & \transpose{\!A}_1^{-1}
  \end{pmatrix}
  \begin{pmatrix}
    I & X \\
    0 & I
  \end{pmatrix}
  \begin{pmatrix}
    A & 0 \\
    0 & \transpose{\!A}^{-1}
  \end{pmatrix}
  \\ =
  \begin{pmatrix}
    I & A_1 X \transpose{\!A}_1 \\
    0 & I
  \end{pmatrix}
  \begin{pmatrix}
    A_1 A & 0 \\
    0 & \transpose{\!A}_1^{-1} \transpose{\!A}^{-1}
  \end{pmatrix}
\end{equation}
and that the replacements $X \gets A_1^{-1} X \transpose{\!A}_1^{-1}$, $A \gets A_1^{-1}A$ change
\begin{equation}
  \label{eq:dXdAchange}
  \prod_{i\leq j} \dd x_{ij} \gets (\det A_1)^{-n -1} \prod_{i\leq j} \dd x_{ij}, \quad \prod_{i, j} \dd a_{ij} \gets ( \det A_1)^{-n} \prod_{i, j} \dd a_{ij}. 
\end{equation}
To verify the invariance under $
\begin{pmatrix}
  0 & -I \\
  I & 0
\end{pmatrix}
$ we may restrict further to the set of $g$ of the form (\ref{eq:XATvariables}) with $X$ invertible, as this is still an open, dense set.
We then have
\begin{multline}
  \label{eq:Jinvariance}
  \begin{pmatrix}
    0 & -I \\
    I & 0
  \end{pmatrix}
  \begin{pmatrix}
    I & X \\
    0 & I
  \end{pmatrix}
  \begin{pmatrix}
    A & 0 \\
    0 & \transpose{\!A}^{-1}
  \end{pmatrix}
  \begin{pmatrix}
    I & 0 \\
    T & I
  \end{pmatrix}
  \\ =
  \begin{pmatrix}
    I & -X^{-1} \\
    0 & I
  \end{pmatrix}
  \begin{pmatrix}
    X^{-1} A & 0 \\
    0 & \transpose{\!X} \transpose{\!A}^{-1} 
  \end{pmatrix}
  \begin{pmatrix}
    I & 0 \\
    T + \transpose{\!A} X^{-1} A & I
  \end{pmatrix}
  .
\end{multline}
The invariance then follows from the fact that the replacement $X \gets X^{-1}$ changes
\begin{equation}
  \label{eq:Xinversechange}
  \prod_{i \leq j} \dd x_{ij} \gets ( \det X)^{-n -1} \prod_{i\leq j} \dd x_{ij}. 
\end{equation}

\subsection{Parabolic subgroups}
\label{sec:parabolic}

We recall that conjugacy classes of parabolic subgroups of $G$ are in bijection with subsets of the $n$ positive simple roots, see for example section 4.5.3 of \cite{Terras1988}. 
Here we make the choice of positive simple roots $\alpha_1, \dots, \alpha_n$ where, for $1 \leq l < n$,
\begin{equation}
  \label{eq:alphaj}
  \alpha_l
  \begin{pmatrix}
    A & 0 \\
    0 & A^{-1}
  \end{pmatrix}
  = a_l a_{l+1}^{-1}
\end{equation}
and
\begin{equation}
  \label{eq:alphan}
  \alpha_n
  \begin{pmatrix}
    A & 0 \\
    0 & A^{-1}
  \end{pmatrix}
  = a_n^2.
\end{equation}
Here
\begin{equation}
  \label{eq:Vvj}
  A =
  \begin{pmatrix}
    a_1 & \cdots & 0 \\
    \vdots & \ddots & \vdots \\
    0 & \cdots & a_n
  \end{pmatrix}
\end{equation}
is positive diagonal.
See for example section 5.1 of \cite{Terras1988}.

The parabolic corresponding to a subset $L \subset \{ \alpha_1, \dots, \alpha_n\}$ is given by
\begin{equation}
  \label{eq:PJdef}
  P_L = N \bigcap_{\alpha \in L}  Z(\mathrm{ker}( \alpha ))
\end{equation}
where $Z(\mathrm{ker}(\alpha))$ is the centraliser in $G$ of the kernel of the root $\alpha$ and
\begin{equation}
  \label{eq:Ndef}
  N = \bigg\{
  \begin{pmatrix}
    U & X \transpose{U}^{-1} \\
    0 & \transpose{U}^{-1}
  \end{pmatrix}
  : \\
  U\ \mathrm{ upper\ triangular\ unipotent},\ X\ \mathrm{symmetric} \bigg\}.
\end{equation}
The maximal parabolic subgroups correspond to subsets $L$ of size $n-1$ and we denote them by $P_l$, $1\leq l \leq n$ corresponding to root $\alpha_l$ not in the set $L$.
For $1 \leq l < n$, we write an arbitrary element of $P_l$ as
\begin{equation}
  \label{eq:Pjdecomp}
  \begin{pmatrix}
    I & R_l & T_l - S_l \transpose{\!R}_l & S_l \\
    0 & I & \transpose{\!S}_l & 0 \\
    0 & 0 & I & 0 \\
    0 & 0 & -\transpose{\!R}_l & I
  \end{pmatrix}
  \begin{pmatrix}
    a_lI & 0 & 0 & 0 \\
    0 & I & 0 & 0 \\
    0 & 0 & a_l^{-1}I & 0 \\
    0 & 0 & 0 & I
  \end{pmatrix}
  \begin{pmatrix}
    U_l & 0 & 0 & 0 \\
    0 & A_l & 0 & B_l \\
    0 & 0 & \transpose{U}_l^{-1} & 0 \\
    0 & C_l & 0 & D_l
  \end{pmatrix}
\end{equation}
where $R_l$ and $S_l$ are $l \times (n-l)$ matrices, $T_l$ is $l \times l$ symmetric, $a_l > 0$, $U_l \in \mathrm{GL}(l, \mathbb{R})$ with $\det U_l = \pm 1$, and $ g_l = 
\begin{pmatrix}
  A_l & B_l \\
  C_l & D_l
\end{pmatrix}
\in \mathrm{Sp}(n - l, \mathbb{R})$.
For $ l =n$, we write an arbitrary element of $P_n$ as
\begin{equation}
  \label{eq:Pndecomp}
  \begin{pmatrix}
    I & T_n \\
    0 & I 
  \end{pmatrix}
  \begin{pmatrix}
    a_n I & 0 \\
    0 & a_n^{-1} I 
  \end{pmatrix}
  \begin{pmatrix}
    U_n & 0 \\
    0 & \transpose{\!U_n}^{-1}
  \end{pmatrix}
\end{equation}
where $T_n$ is $n\times n$ symmetric, $a_n > 0$, and $U_n \in \mathrm{GL}(n, \mathbb{R})$ with $\det U_n = \pm 1$.
The factorizations (\ref{eq:Pjdecomp}), (\ref{eq:Pndecomp}) are in fact the Langlands decompositions of $P_l$, $P_n$, which write an arbitrary element of the parabolic subgroup as a product of elements of a nilpotent subgroup, a diagonal subgroup, and a semi-simple subgroup.
For general considersations regarding the Langlands decomposition, see section 7.7 of \cite{Knapp2002}.
The author's lecture notes \cite{MarklofWelsh2021c} contain explicit calculations for the symplectic group $G$ along these lines.

\subsection{Schr\"odinger and Segal-Shale-Weil representations}
\label{sec:representations}

The Schr\"odinger representation $W$ of $H$ acts on $L^2(\mathbb{R}^n)$ by the unitary transformations
\begin{equation}
  \label{eq:Schrodingerrep}
  W(\bm{x}, \bm{y}, t) f(\bm{x}_0) = \e( -t + \tfrac{1}{2}\bm{x}\transpose{\!\bm{y}} + \bm{x}_0 \transpose{\bm{y}}) f(\bm{x}_0 + \bm{x}). 
\end{equation}
We remark that this definition of the Schr\"odinger representation differs slightly from the conventional one; they are of course unitarily equivalent.

Given $g \in G$, we obtain another representation $W^g$ of $H$ by $W^g(h) = W(h^g)$.
By the Stone-von Neumann theorem, there exists unitary operators $R(g)$ on $L^2(\mathbb{R}^n)$ such that
\begin{equation}
  \label{eq:Rgequation}
  W^g = R(g)^{-1} W R(g).
\end{equation}
The relation (\ref{eq:Rgequation}) actually defines $R(g)$ up to a scalar multiple.
Regardless of the choice of this scalar (which we make below), we have
\begin{equation}
  \label{eq:rhodef}
  R(g_1g_2) = \rho(g_1, g_2) R(g_1)R(g_2)
\end{equation}
for a nontrivial, unitary cocycle $\rho: G \times G \to \mathbb{C}$.
Thus $R$ defines a projective representation of $G$, which is called the Segal-Shale-Weil representation.
The projective representation $R$ can be extended to a true representation of the metaplectic group -- the simply connected double cover of $G$, but we do not make use of this construction.

The following proposition gives expressions for $R(g)$ for certain $g$ and on a dense subset of $L^2(\mathbb{R}^n)$.
In particular the proposition makes precise the choice of scalar multiple in our definition of $R$.

\begin{proposition}
  \label{proposition:Rproperties}
  Let $f\in\mathcal{S}(\mathbb{R}^n)$.
  Then for
  \begin{equation}
    \label{eq:gexample1}
    g =
    \begin{pmatrix}
      A & B \\
      0 & \transpose{\!A}^{-1}
    \end{pmatrix}
    ,
  \end{equation}
  we have
  \begin{equation}
    \label{eq:Rexample1}
    R(g)f(\bm{x}) = |\det A|^{\frac{1}{2}}\e\left( \frac{1}{2} \bm{x} A \transpose{\!B} \transpose{\!\bm{x}} \right) f( \bm{x} A),
  \end{equation}
  and for
  \begin{equation}
    \label{eq:gexample2}
    g = J_l =
    \begin{pmatrix}
      I & 0 & 0 & 0 \\
      0 & 0 & 0 & -I \\
      0 & 0 & I & 0 \\
      0 & I & 0 & 0 
    \end{pmatrix}
    ,
  \end{equation}
  with square blocks of size $l$, $n-l$, $l$, and $n-l$ along the diagonal, $0\leq l \leq n$, we have
  \begin{equation}
    \label{eq:Rexample2}
    R(g)f(\bm{x}) = \int_{\mathbb{R}^{n-l}} f(\bm{x}^{(1)}, \bm{y}^{(2)}) \e( - \bm{x}^{(2)} \transpose{\!\bm{y}}^{(2)}) \dd \bm{y}^{(2)}
  \end{equation}
  with $\bm{x} =
  \begin{pmatrix}
    \bm{x}^{(1)} & \bm{x}^{(2)} 
  \end{pmatrix}
  $.
  Moreover, for $g$ of the form (\ref{eq:gexample1}) and any $g' \in G$, we have $\rho(g, g') = \rho(g', g) = 1$. 
\end{proposition}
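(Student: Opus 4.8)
The plan is to \emph{define} candidate operators $\widetilde R(g)$ on $\mathcal S(\mathbb R^n)$ by the right-hand sides of \eqref{eq:Rexample1} and \eqref{eq:Rexample2}, to show that each extends to a unitary operator on $L^2(\mathbb R^n)$ satisfying the intertwining relation \eqref{eq:Rgequation}, and then to invoke the Stone--von Neumann theorem --- noting that $W^g$ has the same central character as $W$, since \eqref{eq:GHaction} fixes the central coordinate $t$ --- to conclude $\widetilde R(g)=c(g)R(g)$ for some $c(g)$ of modulus one. Declaring $R(g):=\widetilde R(g)$ for these $g$ is exactly the choice of scalar referred to in the statement, and it then only remains to verify that this choice is globally consistent, which is the content of the cocycle claim. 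Throughout, it suffices to check the operator identities on the dense subspace $\mathcal S(\mathbb R^n)$, which all operators involved preserve.

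For $g$ as in \eqref{eq:gexample1}, the operator $\widetilde R(g)f(\bm x)=|\det A|^{\frac12}\e(\tfrac12\bm x A\transpose{\!B}\transpose{\!\bm x})f(\bm x A)$ is unitary, being multiplication by a unimodular function composed with the $L^2$-isometry $f\mapsto|\det A|^{\frac12}f(\,\cdot\,A)$. For \eqref{eq:Rgequation} I would verify the equivalent identity $W(\bm x,\bm y,t)\,\widetilde R(g)=\widetilde R(g)\,W((\bm x,\bm y,t)^g)$ on $\mathcal S(\mathbb R^n)$: by \eqref{eq:GHaction} with $C=0$, $D=\transpose{\!A}^{-1}$ one has $(\bm x,\bm y,t)^g=(\bm x A,\ \bm x B+\bm y\transpose{\!A}^{-1},\ t)$, and substituting into \eqref{eq:Schrodingerrep} and expanding, the two sides agree once the cross term $\bm x A\transpose{\!B}\transpose{\!\bm x_0}$ is symmetrised using $A\transpose{\!B}=B\transpose{\!A}$ (from \eqref{eq:Gdef2}) --- this is the one place the calculation uses $g\in G$. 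For $g=J_l$ as in \eqref{eq:gexample2}, the operator in \eqref{eq:Rexample2} is the partial Fourier transform in the last $n-l$ variables; it is an $L^2$-isometry by Plancherel, and with the convention $\e(z)=\e^{2\pi\i z}$ the kernel $\e(-\bm x^{(2)}\transpose{\!\bm y}^{(2)})$ is precisely the one for which no normalising constant appears. Since \eqref{eq:GHaction} gives $(\bm x,\bm y,t)^{J_l}=((\bm x^{(1)},\bm y^{(2)}),(\bm y^{(1)},-\bm x^{(2)}),t)$, the intertwining relation is here the familiar fact that the Fourier transform exchanges translation with modulation.

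For the cocycle claim I would first check by direct computation that the product of two matrices of the form \eqref{eq:gexample1} is again of that form, with $A=A_1A_2$ and $B=A_1B_2+B_1\transpose{\!A_2}^{-1}$, and that $\widetilde R(g_1)\widetilde R(g_2)=\widetilde R(g_1g_2)$: the determinant factors multiply correctly, and using $\transpose{(\bm x A_1)}=\transpose{\!A_1}\transpose{\!\bm x}$ the two quadratic phases combine into the single phase attached to $g_1g_2$. Hence $\widetilde R$ is a genuine unitary representation of the group $P$ of matrices of the form \eqref{eq:gexample1} (a maximal parabolic, cf.\ \eqref{eq:Pndecomp}), so with the normalisation $R|_P=\widetilde R$ the cocycle $\rho$ is trivial on $P\times P$. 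To obtain $\rho(g,g')=\rho(g',g)=1$ for all $g\in P$, $g'\in G$, I would use the Bruhat decomposition of $G$ into the $n+1$ double cosets $P\,J_l\,P$ to define $R$ on all of $G$ by $R(p_1J_lp_2):=\widetilde R(p_1)\widetilde R(J_l)\widetilde R(p_2)$; its well-definedness reduces to the claim that $\widetilde R$ intertwines conjugation by $J_l$ on the subgroup $P\cap J_lPJ_l^{-1}$, which follows from the explicit formulas above. With this choice $R(pg)=\widetilde R(p)R(g)$ and $R(gp)=R(g)\widetilde R(p)$ for all $p\in P$, $g\in G$, which is precisely the assertion $\rho(p,g)=\rho(g,p)=1$.

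The calculation behind \eqref{eq:Rexample1} is routine bookkeeping. The points that need care are the $J_l$ case, where one must track exactly which variables are transformed and the signs coming from the kernel $\e(-\bm x^{(2)}\transpose{\!\bm y}^{(2)})$, and --- the real subtlety --- the global consistency of the scalar normalisation, i.e.\ the vanishing of $\rho$ on $P$; there the whole force lies in the homomorphism property of $\widetilde R$ on $P$ together with the compatibility of that normalisation with the choice $R(J_l)=\widetilde R(J_l)$, and this level of consistency is all the paper requires, since $R$ is only ever applied to products of elements of the form \eqref{eq:gexample1} and the $J_l$.
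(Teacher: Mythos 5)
The paper offers no proof of this proposition at all: it is stated as a summary of calculations in \cite{LionVergne1980}, with self-contained proofs deferred to the authors' lecture notes \cite{MarklofWelsh2021c}. So your proposal is supplying what the paper delegates to the literature, and the route you take is the standard Lion--Vergne-style one and is essentially sound. The intertwining checks are correct as you describe them: for \eqref{eq:gexample1} the identity $W(h)\widetilde R(g)=\widetilde R(g)W(h^g)$ does reduce, after expanding \eqref{eq:Schrodingerrep} with $(\bm x,\bm y,t)^g=(\bm xA,\ \bm xB+\bm y\transpose{\!A}^{-1},\ t)$, to symmetrising the cross term via $A\transpose{\!B}=B\transpose{\!A}$; for $J_l$ the computed action $((\bm x^{(1)},\bm y^{(2)}),(\bm y^{(1)},-\bm x^{(2)}),t)$ is right and, with $\e(z)=\e^{2\pi\i z}$, the unnormalised partial Fourier kernel in \eqref{eq:Rexample2} is exactly the intertwiner; and your closure computation on the Siegel parabolic $P$ (product has $A=A_1A_2$, $B=A_1B_2+B_1\transpose{\!A_2}^{-1}$, and the phases and determinant factors combine exactly) is what makes $\widetilde R$ a genuine representation of $P$, hence $\rho\equiv 1$ on $P\times P$. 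The Stone--von Neumann step and the passage from $R(pg)=\widetilde R(p)R(g)$, $R(gp)=R(g)\widetilde R(p)$ to the cocycle statement are also fine.

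The one step you assert rather than prove is the well-definedness of $R(p_1J_lp_2):=\widetilde R(p_1)\widetilde R(J_l)\widetilde R(p_2)$, equivalently the exact, scalar-free operator identity $\widetilde R(q)\,\widetilde R(J_l)=\widetilde R(J_l)\,\widetilde R(J_l^{-1}qJ_l)$ for $q\in P\cap J_lPJ_l^{-1}$. Saying this ``follows from the explicit formulas above'' is too quick: this is precisely where the content of the ``moreover'' clause lives, because the nontrivial values of the metaplectic cocycle are exactly the unimodular (eighth-root-of-unity type) constants produced by Gaussian integrals, so one must check that none appear here. The identity is true, and the reason is worth making explicit in your write-up: the constraint $J_l^{-1}qJ_l\in P$ forces the $B$-block of $q$ to vanish in the directions being Fourier-transformed, so conjugating the partial Fourier transform by $\widetilde R(q)$ involves only linear substitutions and multiplication by phases in the untransformed variables, and no stationary-phase constants can arise; writing out the block form of $P\cap J_lPJ_l^{-1}$ and the change of variables in the integral \eqref{eq:Rexample2} completes the argument. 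One further small caveat: the paper's $\rho$ refers to a normalisation of $R$ on all of $G$ that the proposition only pins down on $P$ and the $J_l$; your Bruhat-cell definition supplies a concrete global normalisation compatible with these formulas, which is all that the paper's subsequent use of $R$ (e.g.\ \eqref{eq:Rexample3}) requires, but you should state that this is the choice being made.
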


This proposition is a summary of various calculations found in \cite{LionVergne1980}.
The forthcoming lecture notes by the authors \cite{MarklofWelsh2021c} will give self-contained proofs. 

We remark that together with the Bruhat decomposition
\begin{equation}
  \label{eq:Bruhatdecomp}
  G =  \bigcup_{0\leq l \leq n} P_n J_l P_n
\end{equation}
where $P_n$ is the maximal parabolic subgroup (\ref{eq:Pndecomp}), proposition \ref{proposition:Rproperties} allows one to compute $R(g)$ for any $g \in G$.
For example, for
\begin{equation}
  \label{eq:gexample3}
  g =
  \begin{pmatrix}
    A & B \\
    C & D
  \end{pmatrix}
  =
  \begin{pmatrix}
    I & AC^{-1} \\
    0 & I
  \end{pmatrix}
  \begin{pmatrix}
    0 & -I \\
    I & 0
  \end{pmatrix}
  \begin{pmatrix}
    C & D \\
    0 & \transpose{C}^{-1}
  \end{pmatrix}
\end{equation}
with $C$ invertible, we have
\begin{equation}
  \label{eq:Rexample3}
  R(g)f(\bm{x}) = | \det C|^{- \frac{1}{2}} \e\left( \frac{1}{2} \bm{x} A C^{-1} \transpose{\!\bm{x}}\right)  \\
  \int_{\mathbb{R}^n} f(\bm{y}) \e\left( \frac{1}{2} \bm{y} C^{-1}D \transpose{\!\bm{y}} - \bm{x} \transpose{\!C}^{-1} \transpose{\!\bm{y}}\right) \dd \bm{y}.
\end{equation}

\section{The subgroups $\Gamma$ and $\tilde{\Gamma}$}
\label{sec:Gamma}

We denote by $\Gamma$ the discrete subgroup $\Gamma = \mathrm{Sp}(n ,\mathbb{Z}) \subset G$. 
For
\begin{equation}
  \label{eq:ABCDdef2}
  \gamma = 
  \begin{pmatrix}
    A & B \\
    C & D
  \end{pmatrix}
  \in \Gamma,
\end{equation}
we set $h_\gamma = (\bm{r}, \bm{s}, 0) \in H$ where the entries or $\bm{r}$ are $0$ or $\frac{1}{2}$ depending on whether the corresponding diagonal entry of $C\transpose{D}$ is even or odd, and the entries of $\bm{s}$ are $0$ or $\frac{1}{2}$ depending on whether the corresponding diagonal entry of $A\transpose{B}$ is even or odd.
We now define the group $\tilde{\Gamma} \subset H \rtimes G$ by
\begin{equation}
  \label{eq:Gammatildedef}
  \tilde{\Gamma} = \{ ((\bm{m}, \bm{n}, t) h_\gamma ,  \gamma) \in H \rtimes G : \gamma \in \Gamma, \bm{m} \in \mathbb{Z}^n, \bm{n} \in \mathbb{Z}^n, t \in \mathbb{R}\}.
\end{equation}
We note that this is a subgroup of $G$ because, modulo left multiplication by elements $(\bm{m}, \bm{n}, t)\in H$ with $\bm{m}, \bm{n} \in \mathbb{Z}^n$ and $t \in \mathbb{R}$, we have $h_{\gamma_1 \gamma_2} = h_{\gamma_1} h_{\gamma_2}^{\gamma_1^{-1}}$ for any $\gamma_1, \gamma_2 \in \Gamma$.
Indeed, we have that $2\bm{r}$ and $2\bm{s}$, where $h_{\gamma_1 \gamma_2} = (\bm{r}, \bm{s},0)$, has the same parity as the diagonal entries of
\begin{equation}
  \label{eq:hgamma1gamma2}
  (C_1 A_2 + D_1 C_2) \transpose{(C_1 B_2 + D_1 D_2)}
\end{equation}
and
\begin{equation}
  \label{eq:hgamma1gamma2s}
  (A_1 A_2 + B_1 D_2) \transpose{(A_1 B_2 + B_1 D_2)},
\end{equation}
which, in view of $A_2 \transpose{D}_2 - B_2 \transpose{C_2} = I$, have the same parity as the diagonal entries of
\begin{equation}
  \label{eq:hgamma1gamma2r1}
  C_1 A_2 \transpose{B}_2 \transpose{C}_1 + D_1 C_2 \transpose{D}_2 \transpose{D}_1 + C_1 \transpose{D}_1
\end{equation}
and
\begin{equation}
  \label{eq:hgamma1gamma2s1}
  A_1 A_2 \transpose{B}_2 \transpose{A}_1 + B_1 C_2 \transpose{D}_2 \transpose{B}_1 + A_1 \transpose{B}_1.
\end{equation}
On the other hand, we have
\begin{equation}
  \label{eq:hgamma1hgamma2}
  h_{\gamma_1} h_{\gamma_2}^{\gamma_1^{-1}} = \left( \bm{r}_1 + \bm{r}_2 \transpose{D}_1 - \bm{s}_2 \transpose{C}_1, \bm{s}_1 - \bm{r}_2 \transpose{B}_1 + \bm{s}_2 \transpose{A}_1, * \right),
\end{equation}
where $\gamma_{\gamma_j} = ( \bm{r}_j, \bm{s}_j, 0)$.
The entries of two times the vectors on the right of (\ref{eq:hgamma1hgamma2}) have the same parity as the diagonal entries of (\ref{eq:hgamma1gamma2r1}) and (\ref{eq:hgamma1gamma2s1}), as claimed.

We say that a closed set $\mathcal{D} \subset G$ is a fundamental domain for $\Gamma \backslash G$ if
\begin{enumerate}
\item for all $g \in G$ there exists $\gamma \in \Gamma$ such that $\gamma g \in \mathcal{D}$ and 
\item if for $g \in \mathcal{D}$ there is a non-identity $\gamma \in \Gamma$ such that $\gamma g \in \mathcal{D}$, then $g$ is contained in the boundary of $\mathcal{D}$. 
\end{enumerate}
Following Siegel \cite{Siegel1943}, we define $\mathcal{D}$ to be the set of all
\begin{equation}
  \label{eq:gXYQ}
  g =
  \begin{pmatrix}
    I & X \\
    0 & I
  \end{pmatrix}
  \begin{pmatrix}
    Y^{\frac{1}{2}} & 0 \\
    0 & \transpose{Y}^{-\frac{1}{2}}
  \end{pmatrix}
  k(Q) \in G
\end{equation}
such that
\begin{enumerate}
\item $| \det ( C ( X + \mathrm{i} Y) + D) | \geq 1$ for all $
  \begin{pmatrix}
    A & B \\
    C & D
  \end{pmatrix}
  \in \Gamma $, 
\item $Y \in \mathcal{D}'$, a fundamental domain for the action of $\mathrm{GL}(n, \mathbb{Z})$ on $n\times n$ positive symmetric matrices, and 
\item $|x_{ij}| \leq \frac{1}{2}$, where $x_{ij}$ are the entries of $X$. 
\end{enumerate}
We note that since (\ref{eq:XYQ}) implies that
\begin{equation}
  \label{eq:Ygammag}
  Y(\gamma g) = \transpose{\! ( C ( X - \mathrm{i}Y) + D)}^{-1} Y ( C(X + \mathrm{i}Y) + D)^{-1},
\end{equation}
the first condition implies that for $g \in \mathcal{D}$, $| \det (Y(g))| \geq |\det (Y(\gamma g))|$ for all $\gamma \in \Gamma$. 
We also note that Siegel chooses $\mathcal{D}'$ to be the set of positive definite symmetric $Y$ such that $Y^{-1}$ is in Minkowski's classical fundamental domain.
However, here we choose $\mathcal{D}'$ to be the set of $Y$ such that $Y^{-1}$ is in Grenier's fundamental domain, see \cite{Grenier1988} and \cite{Terras1988}.

Following \cite{Grenier1988} and \cite{Terras1988}, we define $\mathcal{D}' = \mathcal{D}'_n$ recursively as follows.
We set $\mathcal{D}_1' = \{ y > 0 \}$ and 
\begin{equation}
  \label{eq:n1funddom}
  \mathcal{D}'_2 =  \bigg\{
    \begin{pmatrix}
      1 & r_1 \\
      0 & 1
    \end{pmatrix}
    \begin{pmatrix}
      v_1 & 0 \\
      0 & v_2 
    \end{pmatrix}
    \begin{pmatrix}
      1 & 0 \\
      r_1 & 1
    \end{pmatrix}
  : \\
  r_1^2 + \frac{v_1}{v_2} \geq 1,\ 0\leq u \leq \frac{1}{2},\ v_1, v_2 > 0 \bigg\},
\end{equation}
the standard fundamental domain for $\mathrm{GL}(2, \mathbb{Z})$.
For $n > 2$ we define $\mathcal{D}_n'$ to be the set of 
\begin{equation}
  \label{eq:Yuv1}
  Y =
  \begin{pmatrix}
    1 & \bm{r}_1 \\
    0 & I 
  \end{pmatrix}
  \begin{pmatrix}
    v_1 & 0 \\
    0 & Y_1
  \end{pmatrix}
  \begin{pmatrix}
    1 & 0 \\
    \transpose{\!\bm{r}_1} & I
  \end{pmatrix}
\end{equation}
such that
\begin{enumerate}
\item $v_1 = v_1(Y) \geq v_1(A Y \transpose{\!A})$ for all $A \in \mathrm{GL}(n, \mathbb{Z})$, 
\item $Y_1(Y) \in \mathcal{D}_{n-1}'$, and 
\item $|r_j| \leq \frac{1}{2}$ and $ 0 \leq r_1 \leq \frac{1}{2}$, where $r_j$ are the entries of $\bm{r}_1$. 
\end{enumerate}

This is proven to be a fundamental domain in \cite{Grenier1988} and \cite{Terras1988}.
In general, the motivation for using this fundamental domain is the box-shaped cusp, but here the primary advantage is its recursive definition, which we make frequent use of below.
We remark that one can construct a fundamental domain for $\Gamma \backslash G$ with a box-shaped cusp by maximising $v_1$ over all of $\Gamma$, not just $\mathrm{GL}(n, \mathbb{Z})$.
This approach is utilised in the second paper in this series \cite{MarklofWelsh2021b}. 
However we do not need this feature here, and in fact maximising the determinant in the fundamental domain as we have done is useful in what follows, see the proofs of lemmas \ref{lemma:heightcontinuity} and \ref{lemma:heightcontinuity}. 
  
The following proposition records some useful properties of $\mathcal{D}$.
\begin{proposition}
  \label{proposition:Dnproperties}
  Let $g \in \mathcal{D}$ and write
  \begin{equation}
    \label{eq:gVrewrite}
    g =
    \begin{pmatrix}
      I & X \\
      0 & I
    \end{pmatrix}
    \begin{pmatrix}
      Y^{\frac{1}{2}} & 0 \\
      0 & Y^{-\frac{1}{2}}
    \end{pmatrix}
    k(Q), \quad Y =
    \begin{pmatrix}
      1 & \bm{r}_1 \\
      0 & I
    \end{pmatrix}
    \begin{pmatrix}
      v_1 & 0 \\
      0 & Y_1
    \end{pmatrix}
    \begin{pmatrix}
      1 & 0 \\
      \transpose{\!\bm{r}_1} & I
    \end{pmatrix}
    ,
  \end{equation}
  and also 
  \begin{equation}
    \label{eq:Yrewrite}
    Y = U V \transpose{U}, \quad  V =
    \begin{pmatrix}
      v_1 & \cdots & 0 \\
      \vdots & \ddots & \vdots \\
      0 & \cdots & v_n
    \end{pmatrix}
    .
  \end{equation} 
  Then we have
  \begin{enumerate}
  \item $v_n \geq \frac{\sqrt{3}}{2}$ and $v_j \geq \frac{3}{4} v_{j+1}$ for $1\leq j \leq n-1$ and
  \item for all $\bm{x} =
    \begin{pmatrix}
      x^{(1)} & \bm{x}^{(2)}
    \end{pmatrix}
    \in \mathbb{R}^n$,
    \begin{equation}
      \label{eq:approxortho}
      \bm{x} Y \transpose{\!\bm{x}} \asymp_n v_1 (x^{(1)})^2 + \bm{x}^{(2)} Y_1 \transpose{\!\bm{x}}^{(2)}.
    \end{equation}
  \end{enumerate}
\end{proposition}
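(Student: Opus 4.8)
The two assertions concern the entries $v_j$ of the diagonal part $V$ in the decomposition $Y = UV\transpose{U}$ of the height matrix $Y$ for $g\in\mathcal D$, and the near-orthogonality of the quadratic form $\bm x Y\transpose{\!\bm x}$ with respect to the splitting $\bm x = (x^{(1)},\bm x^{(2)})$. Both should be proved by induction on $n$, exploiting the recursive definition of $\mathcal D_n'$.

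The plan for (1) is as follows. First I would relate the two decompositions appearing in \eqref{eq:gVrewrite} and \eqref{eq:Yrewrite}: writing $Y_1 = U_1 V_1 \transpose{\!U_1}$ for the $(n-1)\times(n-1)$ block in Grenier's recursive form, one checks that $v_1$ is the same as the first diagonal entry in $Y = UV\transpose{U}$ (both equal the $(1,1)$ entry of $Y$ after the unipotent conjugation is stripped off), and that $v_2,\dots,v_n$ are exactly the diagonal entries of $V_1$. Hence the inductive hypothesis applied to $Y_1^{-1}\in\mathcal D'_{n-1}$ — recall $\mathcal D'$ is defined via $Y^{-1}$ lying in Grenier's domain — gives $v_n\geq\frac{\sqrt3}{2}$ and $v_j\geq\frac34 v_{j+1}$ for $2\leq j\leq n-1$, so only the single inequality $v_1\geq\frac34 v_2$ remains. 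This is where the reduction condition $v_1(Y)\geq v_1(AY\transpose{\!A})$ (equivalently the $r_1^2 + v_1/v_2\geq 1$ type inequality in the $n=2$ base case, reached by testing against the $\mathrm{GL}(n,\mathbb Z)$ element swapping the first two basis vectors) together with $|r_j|\leq\frac12$ is used: testing the defining inequality against a suitable permutation/shear forces $v_1/v_2$ to be bounded below, and combining with $r_1^2\leq\frac14$ yields the constant $\frac34$. The base case $n=1$ is trivial ($v_1>0$, and the only constraint that survives from $\mathcal D$ is $|\det Y|\geq 1$; actually the constant $\sqrt3/2$ enters at $n=2$ from $r_1^2+v_1/v_2\geq1$ with $r_1^2\leq\frac14$).

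For (2), again induct on $n$. Expanding $\bm x Y\transpose{\!\bm x}$ using the form \eqref{eq:Yuv1}, with $\bm x = (x^{(1)},\bm x^{(2)})$ we get $\bm x Y\transpose{\!\bm x} = v_1\big(x^{(1)} + \bm x^{(2)}\transpose{\!\bm r_1}/1\big)^2$ — more precisely $v_1(x^{(1)} + \bm r_1\cdot\text{(something)})^2$ after sorting out the shear — plus $\bm x^{(2)} Y_1\transpose{\!\bm x}^{(2)}$. The cross term is controlled because $|r_j|\leq\frac12$ are bounded and, crucially, $v_1$ is comparable to the first diagonal entry $v$ of the next level down, so one can absorb the cross term using $v_1 \asymp_n v_2$ (from part (1) iterated, $v_1$ and the largest entry of $V_1$ differ by a bounded factor, but one actually only needs boundedness of the $r_j$ and the monotonicity $v_1\gtrsim v_2$) together with a Cauchy–Schwarz / completing-the-square argument. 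Then apply the inductive hypothesis to $\bm x^{(2)} Y_1\transpose{\!\bm x}^{(2)}$. The implied constants depend only on $n$ since the $r_j$ live in $[-\frac12,\frac12]$ and the ratios $v_j/v_{j+1}$ are bounded below by part (1).

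The main obstacle is the single step $v_1\geq\frac34 v_2$ in part (1): one must correctly translate the condition ``$Y^{-1}$ in Grenier's domain'' into an inequality on $v_1,v_2,r_1$, and verify that the specific $\mathrm{GL}(n,\mathbb Z)$ matrix one tests against (essentially the $2\times2$ modular element $\left(\begin{smallmatrix}0&-1\\1&0\end{smallmatrix}\right)$ embedded in the first two coordinates) indeed yields $r_1^2 + v_1/v_2\geq1$, hence $v_1/v_2\geq 1-\frac14=\frac34$. Once the $n=2$ computation is pinned down, the induction and the quadratic-form estimate in (2) are routine bookkeeping.
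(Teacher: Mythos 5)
Your handling of the ratio bound and of part (2) is essentially the paper's argument: reduce $v_j\geq\frac{3}{4}v_{j+1}$ to $j=1$ via the recursive structure of $\mathcal{D}'$, test the extremality of $v_1$ against a $\mathrm{GL}(n,\mathbb{Z})$ element with first row $(0\ 1\ 0\ \cdots\ 0)$ to get $r^2+v_1/v_2\geq 1$ and hence $v_1\geq\frac34 v_2$; and for the near-orthogonality, control the cross terms produced by the shear $\bm{r}_1$ using $|r_j|\leq\frac12$ at every level together with $v_j\ll_n v_1$ from part (1). (The paper phrases the latter as a lower bound on the angle between the first row of $Y^{\frac12}$ and the span of the remaining rows, via $\|\bm{y}_1\|^2\ll v_1+\cdots+v_n\ll v_1$, but the content is the same as your completing-the-square/Cauchy--Schwarz bookkeeping.)

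There is, however, a genuine gap in your account of $v_n\geq\frac{\sqrt{3}}{2}$. You attribute this constant to the $n=2$ reduction inequality $r_1^2+v_1/v_2\geq1$, propagated by induction through $Y_1\in\mathcal{D}'_{n-1}$. That cannot work: the $\mathrm{GL}(n,\mathbb{Z})$-reduction conditions defining $\mathcal{D}'$ (extremality of $v_1$, bounds on the $r_j$) are invariant under the scaling $Y\mapsto\lambda Y$, so they only constrain the ratios $v_j/v_{j+1}$ and can never yield an absolute lower bound on $v_n$; moreover $Y_1$ is only a point of the $\mathrm{GL}$-reduction domain $\mathcal{D}'_{n-1}$, not the $Y$-component of a point of a symplectic fundamental domain, so there is no inductive hypothesis of the required form to invoke (your parenthetical for $n=1$, that the surviving constraint is $|\det Y|\geq1$, is also not correct). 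The absolute bound comes from condition (1) in the definition of $\mathcal{D}$, namely $|\det(C(X+\mathrm{i}Y)+D)|\geq1$ for all elements of $\Gamma$: taking $C$ with a single nonzero entry $1$ in position $(n,n)$ and $D=I-C$, completed to an element of $\Gamma$, one gets
\begin{equation*}
|\det(C(X+\mathrm{i}Y)+D)|^2 = x_{nn}^2+v_n^2\geq 1,
\end{equation*}
and combining this with $|x_{nn}|\leq\frac12$ from condition (3) gives $v_n^2\geq\frac34$, i.e.\ $v_n\geq\frac{\sqrt{3}}{2}$. This also explains why the constant here is $\frac{\sqrt{3}}{2}$ rather than the $\frac34$ produced by the ratio inequality; your proposal, which never uses the Siegel condition or the bound on $X$, cannot recover it.
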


\begin{proof}
  We apply $|\det( C (X + \mathrm{i}Y) + D)|^2 \geq 1$ for
  \begin{equation}
    \label{eq:CDexample}
    C =
    \begin{pmatrix}
      0 & \cdots & 0 & 0 \\
      \vdots & \ddots & \vdots & \vdots \\
      0 & \cdots & 0 & 0 \\
      0 & \cdots & 0 & 1       
    \end{pmatrix}
    ,\quad D = 
    \begin{pmatrix}
      1 & \cdots & 0 & 0\\
      \vdots & \ddots & \vdots & \vdots \\
      0 & \cdots & 1 & 0 \\
      0 & \cdots & 0 & 0 
    \end{pmatrix}
    ,
  \end{equation}
  which may be completed to $
  \begin{pmatrix}
    A & B \\
    C & D
  \end{pmatrix}
  \in \Gamma$ with $A = D$ and $B = C$.
  We have
  \begin{equation}
    \label{eq:CDexampledet}
    | \det( C(X + \mathrm{i}Y) + D) |^2 = x_{nn}^2 + v_n^2,
  \end{equation}
  where $x_{nn}$ is the $(n, n)$ entry of $X$.
  Since the entries of $X$ are at most a half in absolute value, $v_n^2 \geq 1 - x_{nn}^2 \geq \frac{3}{4}$ as required.

  We have
  \begin{equation}
    \label{eq:Yinverse}
    Y =
    \begin{pmatrix}
      1 & \bm{r}_1 \\
      0 & I 
    \end{pmatrix}
    \begin{pmatrix}
      v_1 & 0 \\
      0 & Y_1
    \end{pmatrix}
    \begin{pmatrix}
      1 & 0 \\
      -\bm{r}_1 & I
    \end{pmatrix}
    \in \mathcal{D}'
  \end{equation}
  and we note that to demonstrate $v_j \geq \frac{3}{4} v_{j+1}$, it suffices to consider $j =1$ by the inductive construction of $\mathcal{D}'$.
  We apply the minimality of $v_1^{-1}$ for an element $\gamma \in \mathrm{GL}(n, \mathbb{Z})$ having first row $
  \begin{pmatrix}
    0 & 1 & 0 & \cdots & 0
  \end{pmatrix}
  $.
  We find that
  \begin{equation}
    \label{eq:v1min}
    v_1^{-1} \leq v_1^{-1}r^2+ v_2^{-1},
  \end{equation}
  where $r$ is the first entry of $\bm{r}_1$.
  Since $|r| \leq \frac{1}{2}$, it follows that $v_1 \geq \frac{3}{4} v_2$.
  
  To demonstrate the second part of the proposition, we let $\bm{y}_1, \dots, \bm{y}_n$ denote the rows of
  \begin{equation}
    \label{eq:Ysquareroot}
    Y^{\frac{1}{2}} =
    \begin{pmatrix}
      1 & \bm{r}_1 \\
      0 & I 
    \end{pmatrix}
    \begin{pmatrix}
      v_1^{\frac{1}{2}} & 0 \\
      0 & Y_1^{\frac{1}{2}}
    \end{pmatrix}
    .
  \end{equation}
  Setting $\bm{y} = x_2 \bm{y}_2 + \cdots + x_n \bm{y}_n$, where the $x_j$ are the entries of $\bm{x}$, our aim is to prove that for some constants $0 < c_1 < 1 < c_2$ depending only on $n$,
  \begin{equation}
    \label{eq:rbound}
    c_1 \left( ||\bm{y}_1||^2 x_1^2 + || \bm{y} ||^2 \right) \leq || x_1 \bm{y}_1 + \bm{y} ||^2 \leq c_2 \left( ||\bm{y}_1||^2 x_1^2 + || \bm{y} ||^2 \right) ,
  \end{equation}
  from which the lower bound in (\ref{eq:approxortho}) follows as $|| \bm{y}_1 ||^2 \geq v_1$.
  The upper bound in (\ref{eq:approxortho}) follows from $v_1 \gg || \bm{y}_1 ||^2$, which is verified below, see (\ref{eq:r1normbound}). 
  Expanding the expression in the middle of (\ref{eq:rbound}), we find that it is enough to show that
  \begin{equation}
    \label{eq:leftsideexpand}
    2 | x_1 \bm{y}_1 \transpose{\!\bm{y}} | \leq (1 - c_1) \left( || \bm{y}_1||^2  x_1^2 + || \bm{y} ||^2 \right),
  \end{equation}
  and
  \begin{equation}
    \label{eq:middleexpand2}
    2 | x_1 \bm{y}_1 \transpose{\!\bm{y}} | \leq (c_2 -1) \left( || \bm{y}_1||^2  x_1^2 + || \bm{y} ||^2 \right).  
  \end{equation}
  The upper bound (\ref{eq:middleexpand2}) is trivial if $c_2 = 2$, and the upper bound (\ref{eq:leftsideexpand}) would follow from
  \begin{equation}
    \label{eq:weakerinequality}
    | \bm{y}_1 \transpose{\!\bm{y}} | \leq (1 - c_1) || \bm{y}_1||\; || \bm{y} ||.
  \end{equation}

  We let $0 < \phi_1 < \pi$ denote the angle between $\bm{y}_1$ and $\bm{y}$ and $ 0 < \phi_2 < \frac{\pi}{2}$ denote the angle between $\bm{y}_1$ and the hyperplane $\mathrm{span} (\bm{y}_2, \dots, \bm{y}_n)$.
  We have $\phi_2 \leq \mathrm{min}(\phi_1, \pi - \phi_1)$, and so $| \cos \phi_1 | \leq | \cos \phi_2 |$.
  We bound $ \cos \phi_2$ away from $1$ by bounding $\sin \phi_2$ away from $0$.

  We have
  \begin{equation}
    \label{eq:sinphi2bound}
    |\sin \phi_2 | = \frac{|| \bm{y}_1 \wedge \cdots \wedge \bm{y}_n || }{|| \bm{y}_1 ||\; || \bm{y}_2 \wedge \cdots \wedge \bm{y}_n ||} = \frac{v_1^{\frac{1}{2}}}{|| \bm{y}_1 ||},
  \end{equation}
  so it suffices to show that $v_1^{\frac{1}{2}} \gg || \bm{y}_1 ||$.
  Here $\wedge$ denotes the usual wedge product on $\mathbb{R}^n$ and the norm on $\bigwedge^k \mathbb{R}^n$ is given by
  \begin{equation}
    \label{eq:wedgenorm}
    || \bm{a}_1 \wedge \cdots \wedge \bm{a}_k ||^2 = \det
    \begin{pmatrix}
      \bm{a}_1 \\
      \vdots \\
      \bm{a}_k
    \end{pmatrix}
    \begin{pmatrix}
      \transpose{\bm{a}}_1 & \cdots & \transpose{\bm{a}}_k
    \end{pmatrix}
    .
  \end{equation}
  Using the inductive construction of $\mathcal{D}'$ and the fact that the entries of $\bm{r}_1(Y), \bm{r}_1(Y_1), \dots$ are at most $\frac{1}{2}$ in absolute value, we observe that $U$ has entries bounded by a constant depending only on $n$.
  We find that
  \begin{equation}
    \label{eq:r1normbound}
    || \bm{y}_1 ||^2 \ll v_1 + \cdots + v_n \ll v_1
  \end{equation}
  with the implied constant depending on $n$. 
\end{proof}

\section{Theta functions and asymptotics}
\label{sec:theta}

Following \cite{LionVergne1980}, for $f\in\mathcal{S}(\mathbb{R}^n)$ we define the theta function $\Theta_f : H \rtimes G \mapsto \mathbb{C}$ by
\begin{equation}
  \label{eq:thetadef}
  \Theta_f( h, g) = \sum_{\bm{m} \in \mathbb{Z}^n} (W(h)R(g)f)( \bm{m}).
\end{equation}
Setting $h = (\bm{x}, \bm{y}, t)$,
\begin{equation}
  \label{eq:gXY}
  g =
  \begin{pmatrix}
    I & X \\
    0 & I
  \end{pmatrix}
  \begin{pmatrix}
    Y^{\frac{1}{2}} & 0 \\
    0 & \transpose{Y}^{-\frac{1}{2}}
  \end{pmatrix}
  k(Q),
\end{equation}
and $f_Q = R(k(Q))f$, we have from (\ref{eq:Schrodingerrep}) and (\ref{eq:Rexample1}) that
\begin{multline}
  \label{eq:explicttheta}
  \Theta_f(h, g) = (\det Y)^{\frac{1}{4}} \e\left(-t + \frac{1}{2} \bm{x} \transpose{\!\bm{y}}\right) \\
  \sum_{\bm{m} \in \mathbb{Z}^n} f_Q \left( (\bm{m} + \bm{x})Y^{\frac{1}{2}}\right) \e\left( \frac{1}{2} (\bm{m} + \bm{x})X \transpose{\!(\bm{m} + \bm{x})} + \bm{m} \transpose{\!\bm{y}}\right).
\end{multline}
Thus for $f(\bm{x}) = \exp(-\pi \bm{x} \transpose{\!\bm{x}})$, $Q = I$, and $h = (0,0,0)$, we recover $(\det Y)^{\frac{1}{4}}$ times the classical Siegel theta series that is holomorphic in $Z = X + iY$.\footnote{
  In fact, one can show that $f(\bm{x}) = \exp(- \pi \bm{x}\transpose{\!\bm{x}})$ is a simultaneous eigenfunction of all the $R(k(Q))$, $Q \in \mathrm{U}(n)$, see \cite{LionVergne1980}.
This together with theorem \ref{theorem:thetaautomorphy} establishes the automorphy of the holomorphic theta function.}

The following theorem establishes the automorphy of $\Theta_f$ under $\tilde{\Gamma}$, which we recall is defined at the beginning of section \ref{sec:Gamma}. 

\begin{theorem}
  \label{theorem:thetaautomorphy}
  For all $(uh_\gamma, \gamma) \in \tilde{\Gamma}$ and $(h, g) \in H \rtimes G$, there is a complex number $\varepsilon(\gamma)$ of modulus $1$ such that
  \begin{equation}
    \label{eq:thetaautomorphy}
    \Theta_f\left((uh_\gamma, \gamma)(h, g)\right) = \varepsilon(\gamma) \rho(\gamma, g) \e\left( -t + \frac{1}{2} \bm{m}\transpose{\!\bm{n}}\right) \Theta_f(h, g),
  \end{equation}
  where $u = (\bm{m}, \bm{n}, t)$.
\end{theorem}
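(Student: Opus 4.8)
The plan is to reduce the automorphy statement to the defining relation \eqref{eq:Rgequation} for the Segal–Shale–Weil operators together with the cocycle relation \eqref{eq:rhodef}, and to handle the Heisenberg part using the explicit formula for $W$ in \eqref{eq:Schrodingerrep}. Writing $(uh_\gamma,\gamma)(h,g) = (uh_\gamma\, h^{\gamma^{-1}},\, \gamma g)$ via \eqref{eq:HGmultiplication}, the left-hand side of \eqref{eq:thetaautomorphy} is by definition $\sum_{\bm m} \big(W(uh_\gamma\, h^{\gamma^{-1}})\, R(\gamma g) f\big)(\bm m)$. First I would peel off the factor $u=(\bm m_0,\bm n_0,t)$ with integer $\bm m_0,\bm n_0$: by \eqref{eq:Schrodingerrep}, $W(u)$ contributes a phase $\e(-t + \tfrac12\bm m_0\transpose{\bm n_0} + \cdots)$ and shifts the lattice argument $\bm m \mapsto \bm m + \bm m_0$; since $\bm m_0\in\mathbb Z^n$ this shift permutes $\mathbb Z^n$, and the extra phase factor $\e(\bm m\transpose{\bm n_0})=1$ for $\bm m\in\mathbb Z^n$. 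This accounts for the $\e(-t+\tfrac12\bm m\transpose{\bm n})$ prefactor (with the notational clash that $u=(\bm m,\bm n,t)$ in the statement). The factor $h^{\gamma^{-1}}$ from the Heisenberg part can be absorbed using the intertwining identity $W(h^{\gamma^{-1}}) = W^{\gamma^{-1}}(h) \cdot(\text{central twist})$ and \eqref{eq:Rgequation}; more cleanly, I would use $R(\gamma)W(h^{\gamma^{-1}}) = W(h) R(\gamma)$, which is exactly \eqref{eq:Rgequation} rearranged, so that after commuting $W(h^{\gamma^{-1}})$ past the projective structure one is left with $W(h)$ acting and an operator $R(\gamma)R(g)$, which by \eqref{eq:rhodef} equals $\rho(\gamma,g)^{-1} R(\gamma g)$.

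The remaining and genuinely substantive point is the following: after these manipulations the sum becomes $\rho(\gamma,g)\,\e(-t+\tfrac12\bm m\transpose{\bm n}) \sum_{\bm m}\big(W(h) R(\gamma) R(g) f\big)(\bm m)$ up to the central contribution of $h_\gamma$, and one must show that
\begin{equation*}
  \sum_{\bm m\in\mathbb Z^n}\big(W(h_\gamma)\,R(\gamma)\,\phi\big)(\bm m) = \varepsilon(\gamma)\sum_{\bm m\in\mathbb Z^n}\phi(\bm m)
\end{equation*}
for every $\phi\in\mathcal S(\mathbb R^n)$ and some unimodular $\varepsilon(\gamma)$ independent of $\phi$. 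This is the classical theta-transformation / Poisson-summation input. I would establish it by checking it on generators of $\Gamma=\mathrm{Sp}(n,\mathbb Z)$: for $\gamma$ of the upper-triangular form \eqref{eq:gexample1} with $A,B$ integral, \eqref{eq:Rexample1} turns the summand into $|\det A|^{1/2}\e(\tfrac12\bm m A\transpose B\transpose{\bm m})\phi(\bm m A)$, and since $A\in\mathrm{GL}(n,\mathbb Z)$ the map $\bm m\mapsto \bm m A$ permutes $\mathbb Z^n$ while the quadratic phase $\e(\tfrac12\bm m A\transpose B\transpose{\bm m})$ is controlled modulo $1$ by the parity of the diagonal of $A\transpose B$ — which is precisely what the shift $h_\gamma$ in \eqref{eq:Gammatildedef} is designed to cancel; for $\gamma=J_l$ as in \eqref{eq:gexample2}, formula \eqref{eq:Rexample2} exhibits $R(J_l)$ as a partial Fourier transform in the last $n-l$ variables, and summing over $\mathbb Z^n$ and applying Poisson summation in those variables returns $\sum_{\bm m}\phi(\bm m)$ exactly, so $\varepsilon(J_l)=1$ there. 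One then checks consistency of the $\varepsilon$ so defined with the cocycle $\rho$ and the multiplication $h_{\gamma_1\gamma_2}=h_{\gamma_1}h_{\gamma_2}^{\gamma_1^{-1}}$ (modulo integer Heisenberg elements) already verified in section \ref{sec:Gamma}, so that $\varepsilon$ is well-defined on all of $\Gamma$ and \eqref{eq:thetaautomorphy} propagates from generators to arbitrary $\gamma$.

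The main obstacle is bookkeeping of the various unimodular scalars: the cocycle $\rho(\gamma,g)$, the metaplectic ambiguity in $R$, and the half-integer shift $h_\gamma$ all produce phase factors, and one must verify they combine consistently — i.e. that the putative multiplier $\varepsilon(\gamma)$ is independent of $(h,g)$ and that the functional equation is stable under composition $\gamma_1\gamma_2$. Here the key leverage is proposition \ref{proposition:Rproperties}: the statement $\rho(g,g')=\rho(g',g)=1$ for $g$ of the form \eqref{eq:gexample1} means the parabolic generators introduce no cocycle obstruction, and the Bruhat decomposition $G=\bigcup_l P_n J_l P_n$ \eqref{eq:Bruhatdecomp} reduces the general cocycle to $\rho$-values involving only the $J_l$, which can be pinned down from \eqref{eq:Rexample2}. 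I would therefore organise the proof as: (i) the Heisenberg reduction, (ii) the generator-by-generator theta transformation defining $\varepsilon$, (iii) a cocycle-consistency lemma showing $\varepsilon(\gamma_1\gamma_2)=\varepsilon(\gamma_1)\varepsilon(\gamma_2)$ up to the known phases, and (iv) extension to all of $H\rtimes G$ by the automorphy relation \eqref{eq:Rgequation} once more. Steps (i) and (iv) are routine; step (iii) is where the care lies.
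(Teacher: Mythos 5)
Your outline is essentially correct, but note that the paper does not actually prove theorem \ref{theorem:thetaautomorphy}: it cites \cite{LionVergne1980} (which proves automorphy for a finite-index subgroup of $\tilde\Gamma$), \cite{Mumford1983} (full group, but only for the Gaussian), and defers a self-contained argument to the forthcoming notes \cite{MarklofWelsh2021c}. What you propose — peel off $u$ via \eqref{eq:Schrodingerrep}, use the intertwining relation \eqref{eq:Rgequation} and the cocycle \eqref{eq:rhodef} to reduce everything to the single distributional identity $\sum_{\bm k}(W(h_\gamma)R(\gamma)\phi)(\bm k)=\varepsilon(\gamma)\sum_{\bm k}\phi(\bm k)$, and then verify that identity on the parabolic generators \eqref{eq:gexample1} (where the parity shift $h_\gamma$ cancels the half-integer quadratic phase) and on $J_l$ via \eqref{eq:Rexample2} and Poisson summation — is exactly the classical Lion--Vergne-style proof the paper points to, so your route supplies the argument the paper outsources. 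Your worry in step (iii) is also lighter than you think: since the eigenvalue $\varepsilon(\gamma)$ is uniquely determined by testing against any $\phi$ with $\sum_{\bm k}\phi(\bm k)\neq 0$, once the identity holds on generators it propagates to products automatically through $R(\gamma_1\gamma_2)=\rho(\gamma_1,\gamma_2)R(\gamma_1)R(\gamma_2)$, the relation $h_{\gamma_1\gamma_2}=h_{\gamma_1}h_{\gamma_2}^{\gamma_1^{-1}}$ modulo integral Heisenberg elements established in section \ref{sec:Gamma}, and unitarity of $\rho$; no separate well-definedness check is needed.

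Two points need repair or citation. First, your intertwining identity is written with the factors on the wrong sides: from \eqref{eq:Rgequation} and the right action $(h^{g_1})^{g_2}=h^{g_1g_2}$ one gets $W(h^{\gamma^{-1}})R(\gamma)=R(\gamma)W(h)$, not $R(\gamma)W(h^{\gamma^{-1}})=W(h)R(\gamma)$; consequently the reduced sum is $\sum_{\bm k}\bigl(W(h_\gamma)R(\gamma)\,W(h)R(g)f\bigr)(\bm k)$, i.e.\ your key claim should be applied with $\phi=W(h)R(g)f$ sitting to the right of $R(\gamma)$ — your displayed claim is the correct one, but the sentence preceding it (with $W(h)$ to the left of $R(\gamma)R(g)$) contradicts it and would not return $\Theta_f(h,g)$. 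Second, the argument rests on $\mathrm{Sp}(n,\mathbb{Z})$ being generated by the integral elements of the form \eqref{eq:gexample1} together with $J_0$; this is a true but nontrivial classical fact and must be invoked explicitly. With these adjustments the proposal is a sound substitute for the citations the paper relies on.
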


This theorem is proved in \cite{LionVergne1980} but with $\tilde{\Gamma}$ replaced by a finite index subgroup.
The automorphy under the full group $\tilde{\Gamma}$ is proved in \cite{Mumford1983}, however only for the special function $f(\bm{x}) = \exp( -\pi \bm{x} \transpose{\!\bm{x}})$.
In \cite{LionVergne1980} it is shown that this function is an eigenfunction for all of the operators $R(k(Q))$, $Q \in \mathrm{U}(n)$.
Moreover, it can be seen from the theory built there that the automorphy for any Schwartz function follows from that for $\exp( -\pi \bm{x} \transpose{\!\bm{x}})$.
A self-contained proof along the lines of \cite{LionVergne1980} is presented in forthcoming notes by the authors \cite{MarklofWelsh2021c}. 
We also remark that $\varepsilon(\gamma)$ can be expressed as a kind of Gauss sum as shown in \cite{LionVergne1980} and the author's notes, but we do not make use of this here. 

We recall that for $Q \in \mathrm{U}(n)$ and $f \in L^2(\mathbb{R}^n)$, we let $f_Q = R(k(Q)) f$.
The following lemma states that if $f$ is a Schwartz function, then the $f_Q$ are ``uniformly Schwartz.''

\begin{lemma}
  \label{lemma:uniformSchwartz}
  Let $f \in\mathcal{S}(\mathbb{R}^n)$.
  Then for all $A > 0$ and multi-indices $\alpha \geq 0$, there exist constants $c_f(\alpha, A)$ such that for all $Q \in \mathrm{U}(n)$,
  \begin{equation}
    \label{eq:cfalphadef}
    \left | \left( \frac{\partial}{\partial \bm{x}}\right)^\alpha f_Q(\bm{x}) \right| \leq c_f(\alpha, A)(1 + ||\bm{x}||)^{-A}. 
  \end{equation}
\end{lemma}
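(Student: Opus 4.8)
The plan is to reduce the statement to the compactness of $\mathrm{U}(n)$ together with the explicit unitary formula for $R(k(Q))$ acting on Schwartz functions. The key point is that $Q\mapsto R(k(Q))f$ should be a continuous map from $\mathrm{U}(n)$ into the Fr\'echet space $\mathcal{S}(\mathbb{R}^n)$; once this is known, the family $\{f_Q : Q\in\mathrm{U}(n)\}$ is a continuous image of a compact set, hence a compact subset of $\mathcal{S}(\mathbb{R}^n)$, and in particular for each Schwartz seminorm $\|\cdot\|_{\alpha,A}$ defined by $\|h\|_{\alpha,A}=\sup_{\bm{x}}(1+\|\bm{x}\|)^A|(\partial/\partial\bm{x})^\alpha h(\bm{x})|$ the quantity $\sup_{Q}\|f_Q\|_{\alpha,A}$ is finite. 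That supremum is exactly the constant $c_f(\alpha,A)$ claimed in \eqref{eq:cfalphadef}.

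First I would make the continuity of $Q\mapsto R(k(Q))f$ explicit rather than abstract, since this is what actually needs checking. Writing a general $Q\in\mathrm{U}(n)$ and expressing $k(Q)$ via the Bruhat decomposition $G=\bigcup_{0\le l\le n}P_nJ_lP_n$ of \eqref{eq:Bruhatdecomp}, proposition \ref{proposition:Rproperties} gives $R(k(Q))f$ as a composition of a Gaussian-times-linear-substitution operator of the form \eqref{eq:Rexample1} and a partial Fourier transform of the form \eqref{eq:Rexample2}, both applied to $f$. On the open dense piece with $C$ invertible this is the single formula \eqref{eq:Rexample3}: $R(k(Q))f(\bm{x})=|\det C|^{-1/2}\e\big(\tfrac12\bm{x}AC^{-1}\transpose{\!\bm{x}}\big)\int f(\bm{y})\,\e\big(\tfrac12\bm{y}C^{-1}D\transpose{\!\bm{y}}-\bm{x}\transpose{\!C}^{-1}\transpose{\!\bm{y}}\big)\dd\bm{y}$, with $A,C,D$ continuous functions of $Q$. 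Since $f$ is Schwartz, the Fourier integral and all its $\bm{x}$-derivatives are Schwartz and depend continuously on the parameters $C^{-1}D$ and $\transpose{\!C}^{-1}$; multiplication by the polynomial-phase Gaussian and by $|\det C|^{-1/2}$ preserves this. Thus $Q\mapsto f_Q$ is continuous on the open dense subset of $\mathrm{U}(n)$ where $C(Q)$ is invertible, and one extends to all of $\mathrm{U}(n)$ by a limiting argument using the cocycle relation \eqref{eq:rhodef} and the fact that $R$ is unitary (so no blow-up can occur), or equivalently by covering $\mathrm{U}(n)$ with the finitely many Bruhat cells and checking continuity on each cell's closure using \eqref{eq:Rexample2}, where the partial Fourier transform in the $\bm{y}^{(2)}$ variables manifestly sends Schwartz functions to Schwartz functions continuously.

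With continuity in hand, the argument concludes quickly: the image of the compact group $\mathrm{U}(n)$ under the continuous map $Q\mapsto f_Q$ is compact in $\mathcal{S}(\mathbb{R}^n)$, hence bounded in every seminorm, which yields \eqref{eq:cfalphadef} with $c_f(\alpha,A)=\sup_{Q\in\mathrm{U}(n)}\|f_Q\|_{\alpha,A}<\infty$. The main obstacle is the continuity/uniformity across the boundaries between Bruhat cells: the formula \eqref{eq:Rexample3} degenerates as $\det C\to 0$, so one cannot use it globally, and one must instead argue either via unitarity (which forbids any actual singularity and lets one pass to the limit) or by re-deriving the needed estimates directly from \eqref{eq:Rexample1} and \eqref{eq:Rexample2} on each cell. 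A clean way to avoid case analysis entirely is to note that $Q\mapsto k(Q)$ is a smooth map into the compact group $K$, that $R$ restricted to $K$ is (a lift of) a genuine unitary representation on $L^2$ which preserves $\mathcal{S}(\mathbb{R}^n)$, and that for a Lie group acting smoothly the orbit map of a Schwartz vector is smooth into $\mathcal{S}(\mathbb{R}^n)$; compactness of $K$ then gives the uniform bound. Either route reduces the whole lemma to standard facts about the Segal--Shale--Weil representation already summarised in proposition \ref{proposition:Rproperties}.
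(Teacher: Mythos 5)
Your overall strategy (show $Q\mapsto f_Q$ is continuous into $\mathcal{S}(\mathbb{R}^n)$, then invoke compactness of $\mathrm{U}(n)$) is legitimate in principle, but the step you yourself flag as the main obstacle is exactly the content of the lemma, and neither of your proposed remedies closes it. The formula \eqref{eq:Rexample3} degenerates as $\det C\to 0$, i.e.\ near the lower-dimensional Bruhat cells, and ``unitarity forbids blow-up'' is not a valid substitute: $R(k(Q))$ is unitary on $L^2(\mathbb{R}^n)$, and $L^2$-continuity gives no control whatsoever on the Schwartz seminorms $\sup_{\bm{x}}(1+\|\bm{x}\|)^A|\partial^\alpha f_Q(\bm{x})|$ --- an $L^2$-limit of Schwartz functions need not even be bounded pointwise. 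Likewise, ``checking continuity on each cell's closure using \eqref{eq:Rexample2}'' does not work as stated, because the Bruhat factorisation does not extend continuously to the cell boundary and the formula on a smaller cell is not an evident limit of the formula on the larger one; making that limit work uniformly is precisely the difficulty. Your fallback via smooth vectors (the orbit map of a Schwartz vector under the metaplectic representation restricted to $K$ is smooth into $\mathcal{S}$) is a true theorem, but it relies on identifying the smooth-vector Fr\'echet space of the oscillator representation with $\mathcal{S}(\mathbb{R}^n)$ and on G\aa rding-type machinery that is nowhere contained in proposition \ref{proposition:Rproperties}; it is of at least the same depth as the lemma, so quoting it defeats the purpose of the proof. (A smaller point: $R$ is only projective, so the scalar in $f_Q$ need not vary continuously in $Q$; this is harmless for \eqref{eq:cfalphadef} since the bound only involves absolute values, but it means the map $Q\mapsto f_Q$ you want to call continuous must be handled modulo phases.)

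For comparison, the paper resolves the degeneration quantitatively rather than by a compactness/continuity argument. First, every $Q_0\in\mathrm{U}(n)$ is written as $Q_1QQ_2$ with $Q_1,Q_2$ real orthogonal and $Q$ diagonal (by diagonalising $Q_0\transpose{\!Q}_0=X+\mathrm{i}Y$); since $R(k(Q_1))f(\bm{x})=f(\bm{x}Q_1)$, this reduces everything to diagonal $Q$ with entries $\e^{\mathrm{i}\phi_j}$. Then, instead of using \eqref{eq:Rexample3} directly (which blows up when some $\cos\phi_j\to 0$ is replaced by the relevant small entry), one factors $Q=Q'Q_S$ where $Q_S$ applies a quarter rotation exactly in the coordinates whose angle lies in $(0,\tfrac{\pi}{4})\cup(\tfrac{3\pi}{4},\pi)$; then $R(k(Q_S))f=f^S$ is again Schwartz (a partial Fourier transform), and after this switch the ``$C$'' entries appearing in the explicit integral for $R(k(Q'))$ are all at least $\tfrac{1}{\sqrt{2}}$, so integration by parts gives \eqref{eq:cfalphadef} with constants uniform over the finitely many choices of $S$ and over all admissible angles. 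That uniform lower bound on the oscillatory-integral parameters is the key idea missing from your argument; without it, or without importing the smooth-vector theorem wholesale, the proposal does not yield the uniform constants $c_f(\alpha,A)$.
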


\begin{proof}
  Since $f$ is Schwartz, so are the Fourier transforms of $f$ with respect to any subset of the variables.
  For a subset $S \subset \{1, \dots, n\}$, multi-index $\alpha \geq 0$, and $A > 0$, we let $c_f^S(\alpha, A)$ be constants such that
  \begin{equation}
    \label{eq:cfSdef}
    \left | \left( \frac{\partial}{\partial \bm{x}}\right)^\alpha f^S(\bm{x}) \right| \leq c_f^S(\alpha, A)(1 + ||\bm{x}||)^{-A}
  \end{equation}
  where $f^S$ is the Fourier transform of $f$ in the variables having indices in $S$.

  We now consider $f_Q$ for $Q \in \mathrm{U}(n)$ diagonal with the first $n-l$ entries $1$ and the last $l$ entries $\e^{\mathrm{i} \phi_j}$ with $0 < \phi_j < \pi$.
  We let $S \subset \{1, \dots, n\}$ be the set of indices $j$, $n-l < j \leq n$, such that $\phi_j \in \left(0, \frac{\pi}{4}\right) \cup \left( \frac{3\pi}{4}, \pi \right)$ and we write $Q = Q'Q_S$ where $Q_S$ is diagonal with $(j,j)$ entry $\mathrm{i}$ if $j \in S$ and $1$ if $j \not\in S$.
  We have
  \begin{equation}
    \label{eq:fQQprime}
    f_Q = \rho(k(Q'), k(Q_S)) R(k(Q')) R(k(Q_S)) f,
  \end{equation}
  and we recall that $|\rho(k(Q'), k(Q_S))| = 1$.
  
  We write
  \begin{equation}
    \label{eq:RkUform}
    k(Q') =
    \begin{pmatrix}
      I & 0 & 0 & 0 \\
      0 & D & 0 & -C \\
      0 & 0 & I & 0 \\
      0 & C & 0 & D
    \end{pmatrix}
    ,
  \end{equation}
  with $C$, $D$ diagonal, the entries of $C$ being $\cos \phi_j$ or $\sin \phi_j$ depending on whether $j \in S$ or not, the entries of $D$ being $- \sin \phi_j$ or $\cos \phi_j$ depending on whether $j \in S$ or not.
  We note that the entries of $C$ are at least $\frac{1}{\sqrt{2}}$. 
    
  Writing
  \begin{equation}
    \label{eq:QBruhat}
    k(Q') =
    \begin{pmatrix}
      I & 0 & 0 & 0 \\
      0 & I & 0 & C^{-1}D \\
      0 & 0 & I & 0 \\
      0 & 0 & 0 & I
    \end{pmatrix}
    \begin{pmatrix}
      I & 0 & 0 & 0 \\
      0 & 0 & 0 & -I \\
      0 & 0 & I & 0 \\
      0 & I & 0 & 0 
    \end{pmatrix}
    \begin{pmatrix}
      I & 0 & 0 & 0 \\
      0 & C & 0 & D \\
      0 & 0 & I & 0 \\
      0 & 0 & 0 & C^{-1}
    \end{pmatrix}
    ,
  \end{equation}
  using proposition \ref{proposition:Rproperties}, and noting that $R(k(Q_S)) = f^S$, we compute
  \begin{multline}
    f_Q( \bm{x}^{(1)}, \bm{x}^{(2)}) = \rho(k(Q'), k(Q_S)) |\det C |^{-\frac{1}{2}} \int_{\mathbb{R}^l} f^S(\bm{x}^{(1)}, \bm{y}^{(2)}) \\
    \e \left( \frac{1}{2} \bm{x}^{(2)} C^{-1}D \transpose{\!\bm{x}}^{(2)} - \bm{x}^{(2)} C^{-1} \transpose{\!\bm{y}}^{(2)} + \frac{1}{2} \bm{y}^{(2)} C^{-1}D \transpose{\!\bm{y}}^{(2)} \right) \dd \bm{y}^{(2)}. 
  \end{multline}
  Now as the entries of $C$ are between $\frac{1}{\sqrt{2}}$ and $1$, and the entries of $D$ are at most $\frac{1}{\sqrt{2}}$ in absolute value, integration by parts and (\ref{eq:cfSdef}) shows that
  \begin{equation}
    \label{eq:Schwartznorm}
    \left| \left( \frac{\partial}{\partial \bm{x}} \right)^\alpha f_Q(\bm{x}) \right| \ll (1 + || \bm{x}||)^{-A},
  \end{equation}
  with implied constant depending on $f$, $S$, $\alpha$, and $A$.

  We observe that for real orthogonal $Q_1$, $R(k(Q_1))f(\bm{x}) = f(\bm{x}Q_1)$, so (\ref{eq:Schwartznorm}) implies
  \begin{equation}
    \label{eq:Schwartznorm1}
    \left| \left( \frac{\partial}{\partial \bm{x}} \right)^\alpha f_{Q_1 Q Q_2}\bm{x}) \right| \ll (1 + || \bm{x}||)^{-A},
  \end{equation}
  for any orthogonal $Q$.
  It now suffices to show that any unitary matrix $Q_0$ can be written as $Q_1 Q Q_2$ with $Q$ having the special form above and $Q_1$, $Q_2$ real orthogonal.
  Writing $Q_0 \transpose{\!Q}_0 = X + i Y$ with $X$, $Y$ real and symmetric, we note that since $Q_0\transpose{\!Q}_0$ is unitary, $X^2 + Y^2 + i(XY - YX)$ is the identity.
  It follows that $X$, $Y$ commute, and thus can be simultaneously diagonalized by an orthogonal matrix $Q_1$.
  We have $Q_0 \transpose{\!Q}_0 = Q_1 Q^2 \transpose{\!Q}_1$ with $Q$ diagonal, and so $Q_2 = Q^{-1} Q_1^{-1} Q_0$ is orthogonal.
  Finally, we may permute the diagonal entries of $Q$ and change their signs so that the special form above holds. 
\end{proof}

We now turn to analysing the behaviour of the theta function $\Theta_f$, $f$ a Schwartz function, in the cusp of $\tilde{\Gamma} \backslash H \rtimes G$.
We repeatedly use the easy bounds recorded in the following lemma.

\begin{lemma}
  \label{lemma:integersums}
  For real numbers $A > \frac{1}{2}$, $|x|\leq \frac{1}{2}$ and $v, y>0$, we have
  \begin{equation}
    \label{eq:integersum1}
    \sum_{\substack{ m \in \mathbb{Z} \\ m \neq 0}} (v(m + x)^2 + y)^{-A} \ll_A v^{-\frac{1}{2}}(v + y)^{-A + \frac{1}{2}}.
  \end{equation}
  and, if in addition $v \leq a y$ with $a > 0$,
  \begin{equation}
    \label{eq:integersum2}
    \sum_{m \in \mathbb{Z}} (v(m + x)^2 + y)^{-A} \ll_{a,A} v^{-\frac{1}{2}} y^{-A}
  \end{equation}
  for $A > \frac{1}{2}$. 
\end{lemma}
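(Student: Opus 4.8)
The plan is to prove the two bounds by the same elementary device: compare each summand with an integral, using monotonicity of $t\mapsto (t+y)^{-A}$ on $[0,\infty)$. Since $|x|\le\tfrac12$, the terms with $m\neq 0$ split into $m\ge 1$ and $m\le -1$, and in either range the quantity $(m+x)^2$ is increasing in $|m|$; more precisely, for $m\ge 1$ one has $(m+x)^2\ge (m-\tfrac12)^2$, so the summand is dominated by $\bigl(v(m-\tfrac12)^2+y\bigr)^{-A}$, which is decreasing in $m$. Hence
\begin{equation}
  \label{eq:plancompare}
  \sum_{m\ge 1}\bigl(v(m+x)^2+y\bigr)^{-A}\ \le\ \int_{0}^{\infty}\bigl(v t^2+y\bigr)^{-A}\,\dd t,
\end{equation}
and the same bound holds for the sum over $m\le -1$ after the substitution $m\mapsto -m$. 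So it suffices to estimate the single integral on the right of \eqref{eq:plancompare}.

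Next I would evaluate that integral by the substitution $t = (y/v)^{1/2} s$, which turns it into $v^{-1/2} y^{-A+1/2}\int_0^\infty (s^2+1)^{-A}\,\dd s$; the last integral converges precisely because $A>\tfrac12$ and equals a constant $C_A$ depending only on $A$. This gives $\sum_{m\neq 0}(v(m+x)^2+y)^{-A}\ll_A v^{-1/2} y^{-A+1/2}$. To obtain the stated form with $(v+y)^{-A+1/2}$ rather than $y^{-A+1/2}$, I would separate two regimes: if $y\ge v$ then $y^{-A+1/2}\asymp (v+y)^{-A+1/2}$ (for $-A+\tfrac12<0$) and we are done; if $y<v$, then I pull out the $m=\pm1$ terms, which contribute $\ll (v/4+y)^{-A}\ll v^{-A}\ll v^{-1/2}(v+y)^{-A+1/2}$, and for $|m|\ge 2$ we have $(m+x)^2\ge (m-\tfrac12)^2\ge \tfrac14 m^2$, so $v(m+x)^2+y\ge \tfrac14 v m^2$, and $\sum_{|m|\ge 2}(\tfrac14 vm^2)^{-A}\ll v^{-A}$ which is again $\ll v^{-1/2}(v+y)^{-A+1/2}$ in this regime. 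Combining the cases yields \eqref{eq:integersum1}.

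For \eqref{eq:integersum2} the only new feature is the inclusion of the $m=0$ term, $\;(vx^2+y)^{-A}\le y^{-A}$, which already has the right shape up to the missing $v^{-1/2}$; here the hypothesis $v\le ay$ is exactly what is needed, since then $v^{-1/2}\gg_a y^{-1/2}$ is false in general — rather, one writes $y^{-A} = v^{-1/2}\cdot v^{1/2} y^{-A}$ and uses $v^{1/2}\le (ay)^{1/2}$ to get $y^{-A}\ll_a v^{-1/2} y^{-A+1/2}\cdot y^{1/2}\cdot\ldots$ — wait, more cleanly: the $m\neq 0$ part is handled by \eqref{eq:integersum1} together with $v\le ay\Rightarrow v+y\asymp_a y$, so it is $\ll_{a,A} v^{-1/2} y^{-A+1/2}$; and since $v\le ay$ gives $v^{-1/2}\ge a^{-1/2} y^{-1/2}$, hence $y^{-A}\le a^{1/2} v^{-1/2} y^{-A+1/2}$, the $m=0$ term is absorbed as well. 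Thus the full sum is $\ll_{a,A} v^{-1/2} y^{-A+1/2}$ — but the claim states $v^{-1/2} y^{-A}$, so I should instead note that under $v\le ay$ we have $y^{-A+1/2} = y^{-A} y^{1/2}$, which does not simplify; the correct reading is that the right-hand side $v^{-1/2} y^{-A}$ is the intended (weaker for large $y$) bound, and indeed from $v+y\asymp_a y$ and \eqref{eq:integersum1} we get $v^{-1/2}(v+y)^{-A+1/2}\asymp_a v^{-1/2} y^{-A+1/2}\le v^{-1/2} y^{-A}\cdot y^{1/2}$; since we cannot assume $y\le 1$, I would instead bound the $m\neq 0$ sum directly as above but keep $y^{-A}$ by using $v(m+x)^2+y\ge \max(\tfrac14 v m^2, y)\ge y^{1/2}(\tfrac14 vm^2)^{1/2}\cdot\ldots$, i.e. $(v(m+x)^2+y)^{-A}\le (\tfrac14 vm^2)^{-1/2} y^{-A+1/2}$... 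The cleanest route, and the one I would actually write, is: compare with $\int_{-\infty}^\infty (vt^2+y)^{-A}\dd t = C_A v^{-1/2} y^{-A+1/2}$ to bound the whole sum including a correction for $m=0$ of size $\le y^{-A}\le a^{1/2}v^{-1/2}y^{-A+1/2}$, then invoke $y^{-A+1/2}\le y^{-A}\cdot\sup(1,y^{1/2})$ is not available — so the statement must mean $v^{-1/2}y^{-A}$ with an implicit bounded $y$, or there is a typo for $y^{-A+1/2}$; in the write-up I would prove the sharper $\ll_{a,A} v^{-1/2}y^{-A+1/2}$ and remark it is stronger. The main obstacle is therefore not mathematical depth but reconciling the exact exponent in \eqref{eq:integersum2}: establishing the integral comparison is routine, and the real care goes into the $v>y$ regime of \eqref{eq:integersum1} where the leading $m=\pm1$ terms must be peeled off before the integral bound applies cleanly.
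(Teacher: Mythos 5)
Your treatment of (\ref{eq:integersum1}) is correct and is essentially the paper's own argument in a different guise: comparing the sum with $\int_0^\infty (vt^2+y)^{-A}\,\dd t \asymp_A v^{-\frac12}y^{-A+\frac12}$, and handling the regime $y<v$ by peeling off the $m=\pm1$ terms and using $(m+x)^2\gg m^2$ for the tail, is the same elementary computation as the paper's split of the sum at $|m|\approx\sqrt{y/v}$ followed by the case distinction $y<v$ versus $y\geq v$. One small caveat: your displayed comparison of $\sum_{m\geq 1}$ with $\int_0^\infty$ is not literally true with constant $1$, since $t\mapsto (vt^2+y)^{-A}$ is concave for $vt^2<y/(2A+1)$ and midpoint sums can exceed the integral there; shifting the comparison intervals gives the same bound with an absolute constant $2$, which is all you need for a $\ll_A$ statement.

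On (\ref{eq:integersum2}), your diagnosis is right and you should commit to it rather than vacillate. As printed, the bound $v^{-\frac12}y^{-A}$ is false: take $x=0$, $a=1$, $v=y\to\infty$; the $m=0$ term alone is $y^{-A}$, which exceeds $v^{-\frac12}y^{-A}=y^{-A-\frac12}$ by an unbounded factor. What is true, and what the paper's own proof actually establishes (both of its partial sums are bounded by $v^{-\frac12}y^{-A+\frac12}$), is
\[
\sum_{m\in\mathbb{Z}} \big(v(m+x)^2+y\big)^{-A} \ll_{a,A} v^{-\frac12}\,y^{-A+\frac12} \qquad (v\leq ay),
\]
which is exactly the bound you derive: your absorption of the $m=0$ term via $y^{-A}\leq a^{\frac12}v^{-\frac12}y^{-A+\frac12}$ together with (\ref{eq:integersum1}) and $v+y\asymp_a y$ is a clean proof. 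The exponent in (\ref{eq:integersum2}) is a typo, and a harmless one: the lemma is applied in the proof of theorem \ref{theorem:cuspasymptotics} with $A$ arbitrary, so losing $\tfrac12$ in the exponent at each of the finitely many iterations does not affect the conclusion. The one genuine error in your write-up is the closing remark that $v^{-\frac12}y^{-A+\frac12}$ is ``sharper'' than $v^{-\frac12}y^{-A}$; for $y\geq 1$ it is weaker, and the two are incomparable in general, so present it as the corrected statement, not a strengthening, and delete the exploratory back-and-forth preceding it.
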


\begin{proof}
  We have
  \begin{equation}
    \label{eq:integersum1bound}
    \sum_{\substack{ m \in \mathbb{Z} \\ m \neq 0}} (v(m + x)^2 + y)^{-A} \leq \sum_{0 < |m| \leq \sqrt{\frac{y}{v}}} y^{-A} + \sum_{|m| > \sqrt{\frac{y}{v}}} v^{-A} |m + x|^{-2A}.
  \end{equation}
  The first sum here is $0$ if $y < v$, otherwise it is at most $v^{-\frac{1}{2}} y^{-A + \frac{1}{2}}$.
  The second sum is at most
  \begin{equation}
    \label{eq:integersum1bound2}
    2^{2A} v^{-A} \sum_{|m| > \sqrt{\frac{y}{v}}} |m|^{-2A},
  \end{equation}
  which is $\ll_A v^{-A}$ if $y < v$ and $\ll_A v^{-\frac{1}{2}} y^{-A + \frac{1}{2}}$ otherwise.
  The estimate (\ref{eq:integersum1}) now follows as $v^{-A}$, respectively $v^{-\frac{1}{2}} y^{-A + \frac{1}{2}}$, is $\ll v^{-\frac{1}{2}}( v + y)^{-A + \frac{1}{2}}$ if $y < v$, respectively $y\geq v$.

  Turning to (\ref{eq:integersum2}), we have
  \begin{equation}
    \label{eq:integersum2bound}
    \sum_{m \in \mathbb{Z}} ( v(m + x)^2 + y)^{-A} \leq \sum_{|m| \leq \sqrt{\frac{ay}{v}}} y^{-A} + \sum_{|m| >  \sqrt{\frac{ay}{v}}}v^{-A} |m + x|^{-2A}.
  \end{equation}
  The first sum here is $\ll_a v^{-\frac{1}{2}} y^{-A + \frac{1}{2}}$, while the second sum is at most
  \begin{equation}
    \label{eq:integersum2bound2}
    2^{2A} v^{-A}  \sum_{|m| > \sqrt{\frac{ay}{v}}} |m|^{-2A} \ll_{a,A} v^{-\frac{1}{2}} y^{-A + \frac{1}{2}},
  \end{equation}
  so (\ref{eq:integersum2}) follows immediately.
\end{proof}

The following theorem, while a little complicated, gives an asymptotic formula for $\Theta_f(h, g)$ as $g \to \infty$ inside the fundamental domain $\mathcal{D}$.
We describe the relevant neighbourhoods of $\infty$ using the Langlands decomposition (\ref{eq:Pjdecomp}) of the parabolic subgroups $P_l$ with $1\leq l < n$, see (\ref{eq:Langlands}). 
The semi-simple part of the Langlands decomposition of this parabolic is a copy of $\mathrm{Sp}(n - l, \mathbb{R})$, and our asymptotic formula for $\Theta_f$ has a theta function associated to $\mathrm{Sp}(n -l, \mathbb{R})$ for a main term, see (\ref{eq:cuspasymptotics}).

\begin{theorem}
  \label{theorem:cuspasymptotics}
  Let $f\in \mathcal{S}(\mathbb{R}^n)$, $g \in \mathcal{D}$, and $h = (\bm{x}, \bm{y}, t) \in H$ with the entries of $\bm{x}$ and $\bm{y}$ all at most $\frac{1}{2}$ in absolute value.
  For $1\leq l \leq n$ we write
  \begin{multline}
    \label{eq:Langlands}
    g = \begin{pmatrix}
      I & 0 & T_l & S_l \\
      0 & I & \transpose{\!S}_l & 0 \\
      0 & 0 & I & 0 \\
      0 & 0 & 0 & I
    \end{pmatrix}
    \begin{pmatrix}
      U_l & R_l & 0 & 0 \\
      0 & I & 0 & 0 \\
      0 & 0 & \transpose{\!U_l}^{-1} & 0 \\
      0 & 0 & - \transpose{\!R_l}\transpose{\!U_l}^{-1} & I
    \end{pmatrix}
    \\
    \begin{pmatrix}
      I & 0 & 0 & 0 \\
      0 & I & 0 & X_l \\
      0 & 0 & I & 0 \\
      0 & 0 & 0 & I
    \end{pmatrix}
    \begin{pmatrix}
      V_l^{\frac{1}{2}} & 0 & 0 & 0 \\
      0 & Y_l^{\frac{1}{2}} & 0 & 0 \\
      0 & 0 & V_l^{\frac{1}{2}} & 0 \\
      0 & 0 & 0 & \transpose{\!Y_l}^{-\frac{1}{2}}
    \end{pmatrix}
    k(Q)
  \end{multline}
  where $R_l$, $S_l$ are $l \times (n-l)$ matrices, $T_l$ is $l\times l$ symmetric, $U_l$ is $l\times l$ upper-triangular unipotent, $X_l$ is $(n-l) \times (n-l)$ symmetric, $V_l$ is $l\times l$ positive diagonal, $Y_l$ is $(n-l)\times (n-l)$ positive definite symmetric, and $Q \in \mathrm{U}(n)$.
  
  We have
  \begin{align}
    \label{eq:cuspasymptotics}
    \Theta_f(h, g) = & (\det V_l)^{\frac{1}{4}}  (\det Y_l)^{\frac{1}{4}} \e\left( - t + \frac{1}{2} \bm{x}_l\transpose{\!\bm{y}_l}\right)  \nonumber \\
                     & \sum_{\bm{m}^{(2)} \in \mathbb{Z}^{n-l}} f_Q \left( \bm{x}_l^{(1)} V_l^{\frac{1}{2}}, (\bm{m}^{(2)} + \bm{x}_l^{(2)} )Y_l^{\frac{1}{2}} \right)  \nonumber \\
                     & \qquad\qquad\qquad \e \left( \frac{1}{2}( \bm{m}^{(2)} + \bm{x}_l^{(2)} ) X_l \transpose{\!(\bm{m}^{(2)} + \bm{x}_l^{(2)} )} + \bm{m}^{(2)} \transpose{\!\bm{y}_l}^{(2)} \right)  \nonumber \\
                     & + O_{A, f}\left( (\det V_l)^{\frac{1}{4}} (v_l + \bm{x}V \transpose{\!\bm{x}} )^{-A}\right),  
  \end{align}
  where
  \begin{equation}
    \label{eq:vjdef}
    V =
    \begin{pmatrix}
      v_1 & & \\
      & \ddots & \\
      & & v_n
    \end{pmatrix}
    ,
  \end{equation}
  and, with $
  \begin{pmatrix}
    \bm{x} & \bm{y} 
  \end{pmatrix}
  =
  \begin{pmatrix}
    \bm{x}^{(1)} & \bm{x}^{(2)} & \bm{y}^{(1)} & \bm{y}^{(2)}
  \end{pmatrix}
  $,
  \begin{multline}
    \label{eq:xjyjdef}
    \begin{pmatrix}
      \bm{x}_l^{(1)} & \bm{x}_l^{(2)} & \bm{y}_l^{(1)} & \bm{y}_l^{(2)}
    \end{pmatrix}
    \\
    =
    \begin{pmatrix}
      \bm{x}^{(1)} & \bm{x}^{(2)} & \bm{y}^{(1)} & \bm{y}^{(2)}
    \end{pmatrix}
    \begin{pmatrix}
      I & 0 & T_l & S_l \\
      0 & I & \transpose{\!S}_l & 0 \\
      0 & 0 & I & 0 \\
      0 & 0 & 0 & I
    \end{pmatrix}
    \begin{pmatrix}
      U_l & R_l & 0 & 0 \\
      0 & I & 0 & 0 \\
      0 & 0 & \transpose{\!U_l}^{-1} & 0 \\
      0 & 0 & - \transpose{\!R_l}\transpose{\!U_l}^{-1} & I
    \end{pmatrix}
    .
  \end{multline}
\end{theorem}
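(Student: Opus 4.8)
The plan is to derive the asymptotic formula (\ref{eq:cuspasymptotics}) directly from the explicit expression (\ref{eq:explicttheta}) for $\Theta_f$, by substituting the Langlands decomposition (\ref{eq:Langlands}) of $g$ and separating the lattice sum over $\mathbb{Z}^n$ into a main contribution coming from the ``inner'' copy of $\mathrm{Sp}(n-l,\mathbb{R})$ and a tail that is controlled using proposition \ref{proposition:Dnproperties} and lemma \ref{lemma:integersums}. First I would absorb the leftmost two block-unipotent and block-Levi factors of (\ref{eq:Langlands}) into the $H$-part: since $W(h)R(g)f$ only depends on $g$ through its action, and the first two factors of (\ref{eq:Langlands}) lie in the parabolic $P_l$, I would use proposition \ref{proposition:Rproperties} (the formula (\ref{eq:Rexample1}) for block-triangular $g$ and the cocycle triviality) to rewrite $\Theta_f(h,g)$ in terms of the shifted Heisenberg variable $(\bm{x}_l,\bm{y}_l,t)$ defined in (\ref{eq:xjyjdef}); this is why those particular combinations appear. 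After this reduction the relevant symplectic element is the block-diagonal plus $X_l$ piece, for which (\ref{eq:explicttheta}) applies verbatim (with $Y$ replaced by $\mathrm{diag}(V_l, Y_l)$ conjugated appropriately, and $X$ replaced by $\mathrm{diag}(0,X_l)$ up to conjugation), giving a sum over $\bm{m}=(\bm{m}^{(1)},\bm{m}^{(2)})\in\mathbb{Z}^l\times\mathbb{Z}^{n-l}$.

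Next I would split this sum according to whether $\bm{m}^{(1)}=\bm{0}$ or not. The $\bm{m}^{(1)}=\bm{0}$ terms, after checking that the quadratic form in the phase decouples in the required way (the off-diagonal $T_l, S_l, R_l$ contributions having been moved into $\bm{x}_l, \bm{y}_l$), produce exactly the $\mathrm{Sp}(n-l,\mathbb{R})$-theta function written on the first three lines of (\ref{eq:cuspasymptotics}), with the prefactor $(\det V_l)^{1/4}(\det Y_l)^{1/4}$ coming from $(\det Y)^{1/4}=(\det V_l)^{1/4}(\det Y_l)^{1/4}$. The terms with $\bm{m}^{(1)}\neq\bm{0}$ form the error term: here I would use lemma \ref{lemma:uniformSchwartz} to bound $f_Q$ (uniformly in $Q$) by $(1+\|\cdot\|)^{-A'}$ for a large $A'$, then use part (2) of proposition \ref{proposition:Dnproperties} — the quasi-orthogonality estimate (\ref{eq:approxortho}), applied inductively / coordinatewise in the $V$-splitting — to see that the argument of $f_Q$ has squared norm $\gg v_1 (m_1+x)^2 + \cdots$, so that at least one factor $v_j(m_j+\cdots)^2$ with $j\le l$ is large. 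Summing over $\bm{m}^{(1)}\neq\bm{0}$ via (\ref{eq:integersum1}) and over $\bm{m}^{(2)}$ via (\ref{eq:integersum2}) (legitimate because part (1) of proposition \ref{proposition:Dnproperties} gives $v_j\ge \frac34 v_{j+1}$, hence $v_j \le a\,\bm{x}V\transpose{\bm{x}}$-type comparisons hold within the fundamental domain), and using $\det Y \asymp \det V$, yields the claimed bound $O_{A,f}\big((\det V_l)^{1/4}(v_l+\bm{x}V\transpose{\bm{x}})^{-A}\big)$ after relabelling $A'$ in terms of $A$.

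The main obstacle I anticipate is bookkeeping rather than conceptual: namely, tracking precisely how the quadratic phase $\tfrac12(\bm{m}+\bm{x})X\transpose{(\bm{m}+\bm{x})}$ and the Schwartz argument $(\bm{m}+\bm{x})Y^{1/2}$ transform under the conjugations coming from the $U_l$, $R_l$, $S_l$, $T_l$ blocks of (\ref{eq:Langlands}), and verifying that \emph{all} the cross terms between the $\bm{m}^{(1)}$ and $\bm{m}^{(2)}$ blocks are either absorbed into the redefinition (\ref{eq:xjyjdef}) of $\bm{x}_l,\bm{y}_l$ or else contribute only to terms with $\bm{m}^{(1)}\neq\bm{0}$ (hence to the error). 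A secondary technical point is ensuring the quasi-orthogonality (\ref{eq:approxortho}) can be iterated to give a genuine lower bound $\bm{x} Y\transpose{\bm{x}}\gg \sum_j v_j(\text{something}_j)^2$ with the shifts $x_j$ lying in $[-\tfrac12,\tfrac12]$ modulo integers, which is what makes lemma \ref{lemma:integersums} applicable; this requires using that the unipotent matrix $U$ in $Y=UV\transpose U$ has bounded entries on $\mathcal{D}$, exactly as established in the proof of proposition \ref{proposition:Dnproperties} (see (\ref{eq:r1normbound})). Once these are in place, the estimate for the tail is a routine application of Lemma \ref{lemma:integersums}, and the $\bm{m}^{(1)}=\bm{0}$ part is an identity.
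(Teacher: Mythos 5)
Your proposal is correct and follows essentially the same route as the paper: expand $\Theta_f$ via the explicit formula \eqref{eq:explicttheta} in the Langlands coordinates (the paper does this by directly matching $X$ and $Y^{\frac{1}{2}}$ with \eqref{eq:Xcoordinates}--\eqref{eq:Ycoordinates} rather than through proposition \ref{proposition:Rproperties}, but the resulting term-by-term identity \eqref{eq:termcoordinates} is the same bookkeeping), identify the main term with $\bm{m}^{(1)}=\bm{0}$, and bound the remaining terms using lemma \ref{lemma:uniformSchwartz}, the quasi-orthogonality and $v_j\geq\frac{3}{4}v_{j+1}$ from proposition \ref{proposition:Dnproperties} iterated through the recursive structure of $\mathcal{D}'$, and lemma \ref{lemma:integersums}. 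The only organizational difference is that the paper treats $l>1$ by induction on $l$, so that only $\bm{m}^{(1)}=(0\ m)$, $m\neq 0$, needs a fresh estimate, whereas you bound all $\bm{m}^{(1)}\neq\bm{0}$ at once; both work.
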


\begin{proof}
  Comparing the expressions
  \begin{equation}
    \label{eq:gXY1}
    g =
    \begin{pmatrix}
      I & X \\
      0 & I
    \end{pmatrix}
    \begin{pmatrix}
      Y^{\frac{1}{2}} & 0 \\
      0 & \transpose{Y}^{-\frac{1}{2}}
    \end{pmatrix}
    k(Q)
  \end{equation}
  and
  \begin{multline}
    \label{eq:gcoordinates}
    g = \begin{pmatrix}
      I & 0 & T_l & S_l \\
      0 & I & \transpose{\!S}_l & 0 \\
      0 & 0 & I & 0 \\
      0 & 0 & 0 & I
    \end{pmatrix}
    \begin{pmatrix}
      U_l & R_l & 0 & 0 \\
      0 & I & 0 & 0 \\
      0 & 0 & \transpose{\!U_l}^{-1} & 0 \\
      0 & 0 & - \transpose{\!R_l}\transpose{\!U_l}^{-1} & I
    \end{pmatrix}
    \\
    \begin{pmatrix}
      I & 0 & 0 & 0 \\
      0 & I & 0 & X_l \\
      0 & 0 & I & 0 \\
      0 & 0 & 0 & I
    \end{pmatrix}
    \begin{pmatrix}
      V_l^{\frac{1}{2}} & 0 & 0 & 0 \\
      0 & Y_l^{\frac{1}{2}} & 0 & 0 \\
      0 & 0 & V_l^{\frac{1}{2}} & 0 \\
      0 & 0 & 0 & \transpose{\!Y_l}^{-\frac{1}{2}}
    \end{pmatrix}
    k(Q),
  \end{multline}
  we find that
  \begin{equation}
    \label{eq:Xcoordinates}
    X =
    \begin{pmatrix}
      T_l + R_l X_l \transpose{\!R}_l & S_l + R_l X_l \\
      \transpose{\!S_l} + X_l \transpose{\!R}_l & X_l
    \end{pmatrix}
  \end{equation}
  and
  \begin{equation}
    \label{eq:Ycoordinates}
    Y^{\frac{1}{2}} =
    \begin{pmatrix}
      U_l V_l^{\frac{1}{2}} & R_l Y_l^{\frac{1}{2}} \\
      0 & Y_l^{\frac{1}{2}}
    \end{pmatrix}
    .
  \end{equation}
  Recalling from (\ref{eq:explicttheta}) that
  \begin{multline}
    \label{eq:explicttheta1}
    \Theta_f(h, g) = (\det Y)^{\frac{1}{4}} \e\left( - t + \frac{1}{2} \bm{x} \transpose{\!\bm{y}}\right) \\
    \sum_{\bm{m} \in \mathbb{Z}^n} f_Q\left( (\bm{m} + \bm{x})Y^{\frac{1}{2}}\right) \e\left( \frac{1}{2} (\bm{m} + \bm{x})X \transpose{\!(\bm{m} + \bm{x})} + \bm{m}\transpose{\!\bm{y}}\right),
  \end{multline}
  we express each term of the sum as
  \begin{align}
    \label{eq:termcoordinates}
    & f_Q\left((\bm{m} + \bm{x})Y^{\frac{1}{2}}\right) \e\left( \frac{1}{2} \bm{x}\transpose{\!\bm{y}} + \frac{1}{2} (\bm{m} + \bm{x})X \transpose{\!(\bm{m} + \bm{x})} + \bm{m}\transpose{\!\bm{y}}\right) \nonumber \\
    & = f_Q\left( (\bm{m}^{(1)} U_l + \bm{x}_l^{(1)}) V_l^{\frac{1}{2}}, (\bm{m}^{(1)} R_j + \bm{m}^{(2)} + \bm{x}_l^{(2)})Y_l^{\frac{1}{2}}\right) \nonumber \\
    & \qquad \e\bigg( \frac{1}{2} \bm{m}^{(1)} \left(T_l + R_l X_l \transpose{\!R_l}\right)\transpose{\!\bm{m}}^{(1)} \nonumber \\
    & \qquad\qquad + \bm{m}^{(1)} \transpose{\!\big( \bm{y}^{(1)} + \bm{x}^{(1)}( T_l + R_lX_l\transpose{\!R_l}) + (\bm{m}^{(2)} + \bm{x}^{(2)}) (\transpose{\!S_l} + X_l \transpose{\!R_l})\big)}\bigg) \nonumber \\
    & \qquad \e\bigg(\frac{1}{2}\bm{x}_l^{(1)} \transpose{\!\bm{y}_l}^{(1)} + \frac{1}{2} \bm{x}_l^{(2)} \transpose{\!\bm{y}_l}^{(2)} \nonumber \\
    & \qquad\qquad + \frac{1}{2} ( \bm{m}^{(2)} + \bm{x}^{(2)}) X_l \transpose{\!(\bm{m}^{(2)} + \bm{x}^{(2)})} + \bm{m}^{(2)} \transpose{\!\bm{y}_l}^{(2)} \bigg),
  \end{align}
  where $\bm{m} =
  \begin{pmatrix}
    \bm{m}^{(1)} & \bm{m}^{(2)}
  \end{pmatrix}
  $, and $
  \begin{pmatrix}
    \bm{x}_l & \bm{y}_l
  \end{pmatrix}
  =
  \begin{pmatrix}
    \bm{x}_l^{(1)} & \bm{x}_l^{(2)} & \bm{y}_l^{(1)} & \bm{y}_l^{(2)}
  \end{pmatrix}
  $ is given by (\ref{eq:xjyjdef}).
  We observe from (\ref{eq:termcoordinates}) that the main term in (\ref{eq:cuspasymptotics}) is the sum over those $\bm{m} =
  \begin{pmatrix}
    \bm{m}^{(1)} & \bm{m}^{(2)}
  \end{pmatrix}
  $ with $\bm{m}^{(1)} = 0$.

  To bound the contribution of the terms with $\bm{m}^{(1)} \neq 0$, we proceed by induction on $l$, making use of the recursive definition of the fundamental domain $\mathcal{D}'_n$ containing $Y$. 
  For $l =1$, the contribution of $m^{(1)} \neq 0$ is, by lemma \ref{lemma:uniformSchwartz},
  \begin{align}
    \label{eq:j1m1not0}
    & \ll (\det Y)^{\frac{1}{4}} \sum_{\substack{m^{(1)} \in \mathbb{Z} \\ m^{(1)} \neq 0}} \sum_{\bm{m}^{(2)} \in \mathbb{Z}^{n-1}} \left( 1 + (\bm{m} + \bm{x}) Y \transpose{\!(\bm{m} + \bm{x})}\right)^{-A} \nonumber \\
    & \quad \ll v_1^{\frac{1}{4}} (\det Y_1)^{\frac{1}{4}}  \sum_{\substack{m^{(1)} \in \mathbb{Z} \\ m^{(1)} \neq 0}} \sum_{\bm{m}^{(2)} \in \mathbb{Z}^{n-1}} \nonumber \\
    & \qquad \left(1 + v_1( m^{(1)} + x^{(1)})^2 + ( \bm{m}^{(2)} + \bm{x}^{(2)}) Y_1 \transpose{\!(\bm{m}^{(2)} + \bm{x}^{(2)})}\right)^{-A}
  \end{align}
  by proposition \ref{proposition:Dnproperties}.
  Applying (\ref{eq:integersum1}) with $v = v_1$, $x = x^{(1)}$, $y = 1 + ( \bm{m}^{(2)} + \bm{x}^{(2)}) Y_1 \transpose{\!(\bm{m}^{(2)} + \bm{x}^{(2)})}$, and renaming $\bm{m}^{(2)} = \bm{m}_1$, $\bm{x}^{(2)} = \bm{x}_1$ (not to be confused with (\ref{eq:xjyjdef})), this is
  \begin{align}
    \label{eq:j1m1not02}
    & \ll v_1^{-\frac{1}{4}} (\det Y_1)^{\frac{1}{4}} \sum_{\bm{m}_1 \in \mathbb{Z}^{n-1}} \left(v_1 + (\bm{m}_1 + \bm{x}_1) Y_1 \transpose{\!(\bm{m}_1 + \bm{x}_1)}\right)^{-A} \nonumber \\
    & \quad \ll v_1^{- \frac{1}{4}} v_2^{\frac{1}{4}} ( \det Y_2)^{\frac{1}{4}} \sum_{m_1^{(1)} \in \mathbb{Z}} \sum_{\bm{m}_1^{(2)} \in \mathbb{Z}^{n-2}} \nonumber \\
    & \qquad \left(v_1 + v_2(m_1^{(1)} + x_1^{(1)})^2 + (\bm{m}_1^{(2)} + \bm{x}_1^{(2)}) Y_1 \transpose{\!(\bm{m}_1^{(2)} + \bm{x}_1^{(2)})}\right)^{-A}
  \end{align}
  by proposition \ref{proposition:Dnproperties}, recalling that $Y_1 \in \mathcal{D}'_{n-1}$.
  Applying (\ref{eq:integersum2}) with $v = v_2$, $x = x_1^{(1)}$, $y = v_1 + (\bm{m}_1^{(2)} + \bm{x}_1^{(2)}) Y_1 \transpose{\!(\bm{m}_1^{(2)} + \bm{x}_1^{(2)})} \gg v_2$, this is
  \begin{equation}
    \label{eq:n1m1not03}
    \ll v_1^{-\frac{1}{4}} v_2^{-\frac{1}{4}} (\det Y_2)^{\frac{1}{4}} \sum_{\bm{m}_1^{(2)} \in \mathbb{Z}^{n-2}} \left( v_1 + (\bm{m}_1^{(2)} + \bm{x}_1^{(2)}) Y_2 \transpose{\!(\bm{m}_1^{(2)} + \bm{x}_1^{(2)})}\right)^{-A}. 
  \end{equation}
  Continuing in this way, we eventually obtain the bound
  \begin{equation}
    \label{eq:j1final}
    \ll v_1^{-\frac{1}{4}} \cdots v_n^{-\frac{1}{4}} (v_1)^{-A} \ll  v_1^{\frac{1}{4}}( v_1 + \bm{x} V \transpose{\!\bm{x}})^{-A},
  \end{equation}
  thus establishing (\ref{eq:cuspasymptotics}) for $l =1$.

  For $l > 1$, we see by induction, lemma \ref{lemma:uniformSchwartz}, and proposition \ref{proposition:Dnproperties} that we need to bound
  \begin{multline}
    \label{eq:j2m1not0}
    (\det Y)^{\frac{1}{4}} \sum_{\substack{\bm{m}^{(1)} = (0\ m) \in \mathbb{Z}^{l} \\ m \neq 0}} \sum_{\bm{m}^{(2)} \in \mathbb{Z}^{n-l}} \\
    \left( (\bm{m}^{(1)} + \bm{x}^{(1)}) V_l \transpose{\! (\bm{m}^{(1)} + \bm{x}^{(1)})} + (\bm{m}^{(2)} + \bm{x}^{(2)}) Y_l \transpose{\! (\bm{m}^{(2)} + \bm{x}^{(2)})} \right)^{-A}. 
  \end{multline}
  Applying (\ref{eq:integersum1}) with $v = v_l$, $x$ the last entry of $\bm{x}^{(1)}$, $y = \bm{x}^{(1)} V_l \transpose{\!\bm{x}^{(1)}} - v_lx^2$, and renaming $\bm{m}^{(2)} = \bm{m}_1$, $\bm{x}^{(2)} = \bm{x}_1$, this is
  \begin{align}
    \label{eq:j2m1not02}
    & \ll v_l^{-\frac{1}{2}}(\det V_l)^{\frac{1}{4}} ( \det Y_l)^{\frac{1}{4}} \nonumber \\
    & \quad \sum_{\bm{m}_1 \in \mathbb{Z}^{n-l}} \left( v_l + \bm{x}^{(1)} V_l \transpose{\!\bm{x}^{(1)}} + (\bm{x}_1 + \bm{m}_1) Y_l \transpose{\!(\bm{x}_1 + \bm{m}_1)}\right)^{-A} \nonumber \\ 
    & \ll v_l^{-\frac{1}{2}} (\det V_{l})^{\frac{1}{4}} ( \det Y_l)^{\frac{1}{4}} \nonumber \\
    & \quad \sum_{m_1^{(1)} \in \mathbb{Z}} \sum_{\bm{m}_1^{(2)} \in \mathbb{Z}^{n-l-1}} \Big(v_l + \bm{x}^{(1)} V_l \transpose{\!\bm{x}^{(1)}} + \nonumber \\
    & \qquad v_{l+1} (m_1^{(1)} + x_1^{(1)})^2 + (\bm{m}_1^{(2)} + \bm{x}_1^{(2)}) Y_{l+1} \transpose{\!(\bm{m}_1^{(2)} + \bm{x}_1^{(2)})} \Big)^{-A}
  \end{align}
  by proposition \ref{proposition:Dnproperties} and $Y_l \in \mathcal{D}_{n-l}'$. 
  Applying (\ref{eq:integersum2}) repeatedly as we did in the $l =1$ case, we obtain the bound
  \begin{equation}
    \label{eq:j2m1not03}
    \ll (\det V_l)^{\frac{1}{4}} v_l^{-\frac{1}{2}} v_{l+1}^{-\frac{1}{4}} \cdots v_n^{-\frac{1}{4}} ( v_l + \bm{x}^{(1)} V_l \transpose{\!\bm{x}^{(1)}})^{-A} \\
    \ll (\det V_l)^{\frac{1}{4}} ( v_l + \bm{x} V \transpose{\!\bm{x}})^{-A}
  \end{equation}
  as required. 
\end{proof}

Since $v_n \geq \frac{\sqrt{3}}{2}$ for $g \in \mathcal{D}$ by proposition \ref{proposition:Dnproperties}, we obtain the following corollary. 

\begin{corollary}
  \label{corollary:upperbound}
  For a Schwartz function $f\in\mathcal{S}(\mathbb{R}^n)$, $g \in \mathcal{D} $, and $h = (\bm{x}, \bm{y}, t) \in H$ with the entries of $\bm{x}$ and $\bm{y}$ at most $\frac{1}{2}$ in absolute value, we have
  \begin{equation}
    \label{eq:thetadetbound}
    \Theta_f(h, g) \ll_f (\det V)^{\frac{1}{4}}( 1 + \bm{x} V \transpose{\!\bm{x}})^{-A}
  \end{equation}
 where
  \begin{equation}
    \label{eq:gXYkQ1}
    g =
    \begin{pmatrix}
      I & X \\
      0 & I 
    \end{pmatrix}
    \begin{pmatrix}
      Y^{\frac{1}{2}} & 0 \\
      0 & \transpose{Y}^{-\frac{1}{2}}
    \end{pmatrix}
    k(Q)
  \end{equation}
  with $Y = U V \transpose{U}$ as usual. 
  \end{corollary}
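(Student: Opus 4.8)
The plan is to deduce the corollary from Theorem~\ref{theorem:cuspasymptotics} by specialising to $l = n$. In that case every block of size $n - l$ in the Langlands decomposition \eqref{eq:Langlands} is empty: the matrices $R_n$, $S_n$, $X_n$, $Y_n$ all disappear, $V_n$ coincides with $V$, and $Y = U_n V \transpose{\!U_n}$ by \eqref{eq:Ycoordinates}. The sum in \eqref{eq:cuspasymptotics} then runs over $\mathbb{Z}^{0}$, so it reduces to a single term with $\det Y_n = 1$ and no $X_n$-phase, while \eqref{eq:xjyjdef} gives $\bm{x}_n = \bm{x}U_n$ (the factor $T_n$ affects only $\bm{y}_n$). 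Thus Theorem~\ref{theorem:cuspasymptotics} becomes
\[
  \Theta_f(h,g) = (\det V)^{\frac14}\,\e\!\left(-t + \tfrac12\bm{x}_n\transpose{\!\bm{y}_n}\right) f_Q\!\left(\bm{x}_n V^{\frac12}\right) + O_{A,f}\!\left((\det V)^{\frac14}\big(v_n + \bm{x}V\transpose{\!\bm{x}}\big)^{-A}\right),
\]
and it remains to bound each of the two terms on the right by $(\det V)^{\frac14}(1 + \bm{x}V\transpose{\!\bm{x}})^{-A}$.

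For the error term this is immediate: by part~(i) of Proposition~\ref{proposition:Dnproperties} we have $v_n \geq \tfrac{\sqrt3}{2}$ for $g \in \mathcal{D}$, hence $v_n + \bm{x}V\transpose{\!\bm{x}} \geq \tfrac{\sqrt3}{2}\,(1 + \bm{x}V\transpose{\!\bm{x}})$ and the $O$-term is of the required shape. For the main term, the exponential has modulus $1$, so it suffices to bound $f_Q(\bm{x}_n V^{\frac12})$. Applying Lemma~\ref{lemma:uniformSchwartz} with multi-index $0$ and exponent $2A$, and using $(1+s)^2 \geq 1 + s^2$ for $s \geq 0$, gives $|f_Q(\bm{x}_n V^{\frac12})| \leq c_f(0,2A)(1 + \|\bm{x}_n V^{\frac12}\|)^{-2A} \leq c_f(0,2A)(1 + \bm{x}_n V\transpose{\!\bm{x}_n})^{-A}$. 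Since $\bm{x}_n = \bm{x}U_n$ and $Y = U_n V \transpose{\!U_n}$ we have $\bm{x}_n V\transpose{\!\bm{x}_n} = \bm{x}Y\transpose{\!\bm{x}}$, so the proof is complete once we know
\[
  \bm{x}Y\transpose{\!\bm{x}} \asymp_n \bm{x}V\transpose{\!\bm{x}} \qquad \text{for } g \in \mathcal{D},
\]
which yields $1 + \bm{x}_n V\transpose{\!\bm{x}_n} \asymp_n 1 + \bm{x}V\transpose{\!\bm{x}}$.

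This last equivalence — the only genuinely computational point — follows by iterating part~(ii) of Proposition~\ref{proposition:Dnproperties}: one application gives $\bm{x}Y\transpose{\!\bm{x}} \asymp_n v_1 (x^{(1)})^2 + \bm{x}^{(2)} Y_1 \transpose{\!\bm{x}}^{(2)}$, and since $Y_1 \in \mathcal{D}'_{n-1}$ has the same recursive structure one may repeat the argument, so after $n-1$ steps $\bm{x}Y\transpose{\!\bm{x}} \asymp_n \sum_{j=1}^n v_j x_j^2 = \bm{x}V\transpose{\!\bm{x}}$, with implied constants depending only on $n$. (Alternatively one can avoid the $l=n$ degeneration and bound \eqref{eq:explicttheta} directly: estimate $|f_Q|$ by Lemma~\ref{lemma:uniformSchwartz}, use $(\bm{m}+\bm{x})Y\transpose{\!(\bm{m}+\bm{x})} \asymp_n \sum_j v_j(m_j + x_j)^2$ from the same iteration, note that $|x_j|\le\frac12$ forces $\sum_j v_j(m_j+x_j)^2 \geq \bm{x}V\transpose{\!\bm{x}}$ for every $\bm{m}$, factor the decay, and sum over $\bm{m}$ using $v_j \gg_n 1$; but this route is longer.) The only real work is thus the bookkeeping in the induction and the change of variables $\bm{x}\mapsto\bm{x}_n$; everything else is assembled from results already in hand, so no serious obstacle is expected.
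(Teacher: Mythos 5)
Your proposal is correct and is essentially the paper's own (very terse) derivation: the corollary is theorem \ref{theorem:cuspasymptotics} specialised to $l=n$, with the error term handled by $v_n \geq \frac{\sqrt{3}}{2}$ and the single remaining main term absorbed via lemma \ref{lemma:uniformSchwartz} together with $\bm{x}Y\transpose{\!\bm{x}} \asymp_n \bm{x}V\transpose{\!\bm{x}}$. Your iteration of proposition \ref{proposition:Dnproperties}(ii) through the recursive structure of $\mathcal{D}'$ is legitimate (its proof only uses $Y \in \mathcal{D}'_n$) and is the same device the paper uses inside the proof of theorem \ref{theorem:cuspasymptotics}, so no new ingredient is needed.
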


\section{Proof of the main theorems}
\label{sec:mainproof}

Having the bounds from corollary \ref{corollary:upperbound}, we now proceed to the proof of theorems \ref{theorem:thetasumboundssmooth} and \ref{theorem:thetasumbounds}.
In the smooth setting of theorem \ref{theorem:thetasumboundssmooth}, we need to construct a distance-like (DL) function that captures the bounds in corollary \ref{corollary:upperbound}. This will enable us to directly apply theorem 1.7 in \cite{KleinbockMargulis1999} modulo a standard argument that allows us to pass from a full measure set in $\Gamma g \in \Gamma \backslash G$ to a full measure set on the unstable foliation parametrized by $X \in \mathbb{R}^{n\times n}_{\mathrm{sym}}$.
The proof of theorem \ref{theorem:thetasumbounds} is more involved, and requires modifications of the method in \cite{KleinbockMargulis1999} to enable a resolution of the singular cutoff function in (\ref{eq:SMXxydef}). To this end we need to uniformly manage many points in $\Gamma \backslash G$. 

We note that theorem 1.7 in \cite{KleinbockMargulis1999}  is also a main input in the method of \cite{ConsentinoFlaminio2015}.

\subsection{Heights and volumes}
\label{sec:heightsvolumes}

We define the height function $D: \Gamma \backslash G \to \mathbb{R}_{> 0}$ by
\begin{equation}
  \label{eq:Ddef}
  D\left( \Gamma g \right) = \max_{\gamma \in \Gamma} \det V(\gamma g) = \det V(\gamma_0 g)
\end{equation}
where $\gamma_0$ is such that $\gamma_0 g \in \mathcal{D}$ and we write
\begin{equation}
  \label{eq:gXYkQ2}
  g =
  \begin{pmatrix}
    U & X\transpose{U}^{-1} \\
    0 & \transpose{U}^{-1} 
  \end{pmatrix}
  \begin{pmatrix}
    V^{\frac{1}{2}} & 0 \\
    0 & V^{-\frac{1}{2}}
  \end{pmatrix}
  k(Q)
\end{equation}
with $ V = V(g)$ positive diagonal as usual.
We remark that from corollary \ref{corollary:upperbound} and the automorphy of $\Theta_f$, theorem \ref{theorem:thetaautomorphy}, we have $\Theta_f(h, g) \ll D(\Gamma g)^{\frac{1}{4}}$ for all $(h, g) \in H \rtimes G$ with the implied constant depending only on $f$.
We also remark that the logarithm of $D$ is a distance-like function in the sense of \cite{KleinbockMargulis1999}, see also \cite{ConsentinoFlaminio2015}. 

We begin by estimating the measure of the set on which $D$ is large, thus verifying one of the required properties for the logarithm of $D$ to be a $\frac{n+1}{2}$-DL function, see \cite{KleinbockMargulis1999}.
This estimate is also found in \cite{ConsentinoFlaminio2015} and the relevant change of variables in \cite{Klingen1990}. 

\begin{lemma}
  \label{lemma:volumecalculation}
  Let $\mu$ be Haar measure on $G$ and $R > 0$.
  We have
  \begin{equation}
    \label{eq:volumebound}
    \mu( \{ \Gamma g \in  \Gamma \backslash G : D(\Gamma g) \geq R \}) \ll R^{- \frac{n+1}{2}}
  \end{equation}
  with the implied constant depending only on $n$.
\end{lemma}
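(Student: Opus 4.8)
The plan is to compute the measure of the set where $D(\Gamma g)\geq R$ directly, using the fact that $D$ is realised on the fundamental domain $\mathcal{D}$. Since $D(\Gamma g)=\det V(\gamma_0 g)$ with $\gamma_0 g\in\mathcal{D}$, and $\mathcal{D}$ is a fundamental domain, we have
\begin{equation}
  \label{eq:volplan1}
  \mu(\{\Gamma g\in\Gamma\backslash G: D(\Gamma g)\geq R\}) = \mu\big(\{g\in\mathcal{D}: \det V(g)\geq R\}\big).
\end{equation}
So the task reduces to integrating the Haar density \eqref{eq:Haarmeasure} over the part of $\mathcal{D}$ where $\det V=v_1\cdots v_n$ is large. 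I would work in the Iwasawa coordinates $(X,U,V,Q)$ of \eqref{eq:gXUVQ}. On $\mathcal{D}$ the $X$-entries are bounded by $\tfrac12$ and $Q$ ranges over the compact group $\mathrm{U}(n)$, so those factors contribute only a bounded constant. The entries of $U$ are likewise bounded on $\mathcal{D}$ by a constant depending on $n$ (this is exactly the observation used in the proof of Proposition \ref{proposition:Dnproperties}, cf.\ the bound preceding \eqref{eq:r1normbound}), so the $U$-integral is also $O_n(1)$. Hence everything comes down to
\begin{equation}
  \label{eq:volplan2}
  \int_{\{v_1\cdots v_n\geq R\}\cap\mathcal{D}'\text{-region}} \prod_{j=1}^n v_{jj}^{-n+j-2}\,\dd v_{jj},
\end{equation}
where the region of integration incorporates the constraints from Proposition \ref{proposition:Dnproperties}(i), namely $v_n\geq\tfrac{\sqrt3}{2}$ and $v_j\geq\tfrac34 v_{j+1}$ for $1\leq j\leq n-1$.

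The key step is then a clean change of variables adapted to these chain constraints. I would set $w_n=v_n$ and $w_j=v_j/v_{j+1}$ for $1\leq j\leq n-1$, so that $v_j=w_j w_{j+1}\cdots w_n$ and the constraints become $w_n\geq\tfrac{\sqrt3}{2}$, $w_j\geq\tfrac34$ for $j<n$. One computes $\det V=\prod_j v_j=\prod_{j=1}^n w_j^{\,j}$, and the Jacobian of the substitution together with the weight $\prod_j v_{jj}^{-n+j-2}$ turns \eqref{eq:volplan2} into a product of one-dimensional integrals in the $w_j$, each of the form $\int w_j^{-c_j}\dd w_j$ over a half-line, with exponents $c_j$ large enough to converge; imposing the single global constraint $\prod_j w_j^{\,j}\geq R$ then forces at least one $w_j$ to be of size $\gtrsim R^{1/(j\cdot\#)}$ and produces the power saving. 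The bookkeeping here is routine but must be done carefully to extract the exponent $\tfrac{n+1}{2}$; this is essentially the computation of the volume of a neighbourhood of the cusp, and the exponent $\tfrac{n+1}{2}$ reflects the top of the spectrum / the growth of the Eisenstein-type height function (it is the same number appearing in the $\tfrac{n+1}{2}$-DL property and in the convergence exponent $2n+2$ of \eqref{eq:psiconvergence}, since $\tfrac14\cdot(2n+2)\cdot\tfrac{1}{?}$ matches up in the Borel--Cantelli application).

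The main obstacle I anticipate is not any single hard inequality but rather organising the change of variables so that the constraint $\det V\geq R$ interacts cleanly with the chain conditions $v_j\geq\tfrac34 v_{j+1}$: a naive integration without exploiting these conditions would diverge (the integral of $\prod v_{jj}^{-n+j-2}$ over $\det V\geq R$ alone need not be finite because individual $v_j$ can be small), so it is essential that on $\mathcal{D}$ the vector $(v_1,\dots,v_n)$ is confined to a sector where the coordinates are comparable up to the ordering, and one must verify that within this sector the weight makes each $w_j$-integral convergent. An alternative, perhaps cleaner, route is to avoid the explicit fundamental domain for $\mathrm{GL}(n,\mathbb{Z})$ altogether and instead bound \eqref{eq:volplan1} by a sum over $\Gamma$-translates / use reduction theory directly, invoking the known asymptotics of Siegel domains (this is the "relevant change of variables in \cite{Klingen1990}" alluded to in the text); I would fall back on that if the recursive-domain computation gets unwieldy. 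In either case the output is the stated bound $R^{-(n+1)/2}$ with an $n$-dependent implied constant and no dependence on anything else.
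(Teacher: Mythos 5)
Your reduction is exactly the paper's: pass to the fundamental domain, note that on $\mathcal{D}$ the entries of $X$ and $U$ are confined to compacta and $Q$ ranges over the compact group $\mathrm{U}(n)$, and reduce everything to integrating $v_1^{-n-1}v_2^{-n}\cdots v_n^{-2}\,\dd v_1\cdots \dd v_n$ over the region $v_j\geq \frac{3}{4}v_{j+1}$, $v_1\cdots v_n\geq R$ (the paper in fact only uses the chain conditions, not $v_n\geq\frac{\sqrt{3}}{2}$). Your remark that the chain conditions are essential (the integral over $\det V\geq R$ alone diverges) is also correct. The gap is in the one step that actually produces the exponent. In your coordinates $w_j=v_j/v_{j+1}$ for $j<n$, $w_n=v_n$, the integrand becomes $\prod_j w_j^{e_j}\,\dd w_j$ with $e_j+1=-\frac{j(2n-j+1)}{2}$, and the constraint $\prod_j w_j^{\,j}\geq R$ couples all the variables. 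The mechanism you sketch --- ``at least one $w_j\gtrsim R^{1/(jn)}$'' with the remaining one-dimensional integrals bounded by constants --- yields only $\sum_j R^{(e_j+1)/(jn)}\asymp R^{-\frac{n+1}{2n}}$ (the $j=n$ term dominates), which for $n\geq 2$ falls far short of $R^{-\frac{n+1}{2}}$. So the bookkeeping you defer is not routine in the form you set it up: a union bound over which coordinate is large cannot recover the stated exponent.

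The fix is available inside your own setup: integrate in $w_n$ first, over $w_n\geq \big(R\prod_{j<n}w_j^{-j}\big)^{1/n}$, which gives $\ll R^{-\frac{n+1}{2}}\prod_{j<n}w_j^{\frac{j(n+1)}{2}}$, and the remaining exponents are $e_j+\frac{j(n+1)}{2}=-\frac{j(n-j)}{2}-1<-1$, so those integrals converge on $[\frac{3}{4},\infty)$. The paper sidesteps the issue by choosing $\log\det V$ itself as the last coordinate: with $u_j=\log v_j$, $s_j=u_j-u_{j+1}$ for $j<n$ and $s_n=u_1+\cdots+u_n$, the integral factors completely, the constraint becomes the half-line condition $s_n\geq\log R$, and the exponent is $-\frac{n+1}{2}s_n-\sum_{j<n}\frac{j(n-j)}{2}s_j$, so the power $R^{-\frac{n+1}{2}}$ drops out immediately; this, rather than a ``some coordinate is large'' argument, is where $\frac{n+1}{2}$ comes from.
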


\begin{proof}
  We recall that $g \in \mathcal{D}$ is written as
  \begin{equation}
    \label{eq:gUVXQ}
    g =
    \begin{pmatrix}
      U & X\transpose{U}^{-1} \\
      0 & \transpose{U}^{-1} 
    \end{pmatrix}
    \begin{pmatrix}
      V^{\frac{1}{2}} & 0 \\
      0 & V^{-\frac{1}{2}}
    \end{pmatrix}
    k(Q)
  \end{equation}
  for $U$ upper-triangular unipotent, $X$ symmetric, $Q \in \mathrm{U}(n)$, and
  \begin{equation}
    \label{eq:Vrecal}
    V = V(g) =
    \begin{pmatrix}
      v_1 & \cdots & 0 \\
      \vdots & \ddots & \vdots \\
      0 & \cdots & v_n
    \end{pmatrix}
  \end{equation}
  positive diagonal.
  The Haar measure $\mu$ is then Lebesgue measure with respect to the entries of $X$ and the off-diagonal entries of $U$, $\mathrm{U}(n)$-Haar measure on $Q$, and the measure given by
  \begin{equation}
    \label{eq:VHaarmeasure}
    v_1^{-n-1} v_2^{-n} \cdots v_n^{-2} \dd v_1 \dd v_2 \cdots \dd v_n
  \end{equation}
  on $V$.

  From the construction of $\mathcal{D}$, it is clear that the entries of $U$ and $X$ are constrained to a compact region.
  Since $\mathrm{U}(n)$ is also compact, we have by proposition \ref{proposition:Dnproperties} that
  \begin{equation}
    \label{eq:volumebound2}
    \mu( \{ g \in \mathcal{D}_n : \det V(g) \geq R \}) \\
    \ll \underset{ \substack{ v_j \geq \frac{3}{4} v_{j+1} \\ v_1 \cdots v_n  \geq R}}{\int \cdots \int} v_1^{-n-1} v_2^{-n} \cdots v_n^{-2} \dd v_1 \dd v_2 \cdots \dd v_n.
  \end{equation}


  Changing variables $v_j = \exp(u_j)$, the integral in (\ref{eq:volumebound2}) is
  \begin{equation}
    \label{eq:ujintegral}
    \underset{ \substack{ u_j - u_{j+1} \geq \log \frac{3}{4} \\ u_1 + \cdots + u_n \geq \log R}}{\int \cdots \int} \exp( -nu_1 - (n-1)u_2 - \cdots - u_n) \dd u_1 \dd u_2 \cdots \dd u_n. 
  \end{equation}
  We now make the linear change of variables $s_j = u_j - u_{j+1}$ for $ j < n$ and $s_n = u_1 + \cdots + u_n$.
  This transformation has determinant $n$ and its inverse is given by
  \begin{equation}
    \label{eq:ujequals}
    u_j = - \frac{1}{n} \sum_{1\leq i < j} i s_i + \frac{1}{n} \sum_{j\leq i < n} (n-i) s_i + \frac{1}{n} s_n. 
  \end{equation}
  We find that the exponent in (\ref{eq:ujintegral}) is then
  \begin{equation}
    \label{eq:exponent}
    - \sum_{1\leq j \leq n} (n - j + 1) u_j = - \frac{n+1}{2} s_n - \sum_{1 \leq j < n} \frac{j(n-j)}{2} s_j. 
  \end{equation}
  As $ \frac{j(n-j)}{2} > 0$ for $j < n$, the bound (\ref{eq:volumebound}) follows.
\end{proof}

We now control the change in the height function $D$ under a geodesic flow by a fixed distance.
This estimate should be compared to the requirement in \cite{KleinbockMargulis1999} that distance-like functions be uniformly continuous.

\begin{lemma}
  \label{lemma:heightcontinuity}
  For $\Gamma g \in \Gamma \backslash G$ and $|s| \leq 1$, we have
  \begin{equation}
    \label{eq:heightcontinuity}
    D\bigg( \Gamma g
    \begin{pmatrix}
      \e^{s} I & 0 \\
      0 & \e^{-s} I 
    \end{pmatrix}
    \bigg) \asymp D(\Gamma g)
  \end{equation}
  with implied constants depending only on $n$. 
\end{lemma}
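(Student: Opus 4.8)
The plan is to show that flowing by the diagonal geodesic for time $|s|\le 1$ changes $\det V$ by a bounded multiplicative factor, uniformly over the fundamental domain, and that this persists after reprojecting into $\mathcal{D}$. First I would reduce to a computation inside $\mathcal{D}$: writing $g_0 = \gamma_0 g \in \mathcal{D}$ with $\det V(g_0) = D(\Gamma g)$, and setting $a_s = \left(\begin{smallmatrix} \e^s I & 0 \\ 0 & \e^{-s} I\end{smallmatrix}\right)$, the point $g_0 a_s$ need not lie in $\mathcal{D}$, but $D(\Gamma g a_s) = \max_{\gamma\in\Gamma} \det V(\gamma g_0 a_s) \ge \det V(g_0 a_s)$, so one direction of the inequality will follow once we control $\det V(g_0 a_s)$ from below in terms of $\det V(g_0)$. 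For the reverse direction, one uses the definition of $D$ as a maximum together with the fact that $a_s^{-1} = a_{-s}$ also has $|-s|\le 1$, so the two bounds are symmetric: it suffices to prove $\det V(g' a_s) \ll_n \det V(g')$ for \emph{all} $g'\in G$ and $|s|\le 1$, and then apply this with $g' = g_0$ and with $g'$ a near-minimal representative for $\Gamma g a_s$.

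The core estimate is therefore: for any $g'\in G$ with Iwasawa $Y$-part $Y' = Y(g')$, and $|s|\le1$, one has $\det V(g' a_s) \asymp_n \det V(g')$. Here I would use the explicit action of $a_s$ on the Iwasawa coordinates. From (\ref{eq:XYQ}), right multiplication by $a_s$ sends the $(C,D)$ block of $g'$ to $(\e^{-s} C', \e^{-s} D')$ when $s$ acts on the lower block --- more carefully, $g' a_s = \left(\begin{smallmatrix} A' & B'\\ C' & D'\end{smallmatrix}\right)\left(\begin{smallmatrix}\e^s I & 0\\ 0 & \e^{-s}I\end{smallmatrix}\right) = \left(\begin{smallmatrix} \e^s A' & \e^{-s}B'\\ \e^s C' & \e^{-s}D'\end{smallmatrix}\right)$, so $Y(g' a_s) = (\e^{2s} C'\transpose{\!C'} + \e^{-2s} D'\transpose{\!D'})^{-1}$. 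Since $\e^{-2}(C'\transpose{\!C'} + D'\transpose{\!D'}) \le \e^{2s}C'\transpose{\!C'} + \e^{-2s}D'\transpose{\!D'} \le \e^{2}(C'\transpose{\!C'}+D'\transpose{\!D'})$ in the sense of symmetric matrices for $|s|\le1$, taking determinants of inverses gives $\e^{-2n}\det Y(g') \le \det Y(g' a_s)\le \e^{2n}\det Y(g')$. Finally $\det V = \det Y$ because $Y = UV\transpose{U}$ with $U$ unipotent, so $\det V(g' a_s)\asymp_n \det V(g')$ with implied constant $\e^{2n}$.

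It remains to promote this from $\det V$ of a fixed representative to the maximum over $\Gamma$, i.e. to $D$. Here the key point --- and the main obstacle --- is that $\gamma_0$ realizing the maximum for $\Gamma g$ need not be related simply to the maximizer for $\Gamma g a_s$. But this is exactly handled by the $\Gamma$-invariance of $D$ and the uniformity of the core estimate over \emph{all} $g'$: writing $D(\Gamma g a_s) = \det V(\gamma_1 g a_s)$ for the maximizer $\gamma_1$, we have $\det V(\gamma_1 g a_s) \le \e^{2n}\det V(\gamma_1 g) \le \e^{2n} D(\Gamma g)$; conversely $D(\Gamma g) = \det V(\gamma_0 g) = \det V(\gamma_0 g a_s a_{-s}) \le \e^{2n}\det V(\gamma_0 g a_s)\le \e^{2n} D(\Gamma g a_s)$. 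Combining gives $\e^{-2n} D(\Gamma g) \le D(\Gamma g a_s)\le \e^{2n}D(\Gamma g)$, which is (\ref{eq:heightcontinuity}) with implied constants depending only on $n$. The only subtlety worth spelling out is that $D$ is well-defined on $\Gamma\backslash G$ precisely because $\det V(\gamma g)$ is maximized on the $\Gamma$-orbit, which the excerpt has already established via the $|\det(C(X+\mathrm{i}Y)+D)|\ge 1$ condition defining $\mathcal{D}$; no further input is needed.
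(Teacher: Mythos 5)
Your proposal is correct, and its overall skeleton is the same as the paper's: both reduce the lemma to the uniform estimate $\det V(g'a_s) \asymp_n \det V(g')$ for \emph{every} $g' \in G$ and $|s|\leq 1$, and then transfer it to $D$ by exactly the symmetric maximizer argument you give (apply the estimate to $\gamma_1 g$ for the maximizer of $\Gamma g a_s$, and, using $a_s^{-1}=a_{-s}$, to $\gamma_0 g a_s$ for the maximizer of $\Gamma g$). Where you genuinely differ is in the proof of the core estimate. The paper stays in Iwasawa coordinates: writing $Q = R + \i S$ it computes $Y(g_s) = Y^{\frac{1}{2}}\left(\e^{2s} S\transpose{\!S} + \e^{-2s} R\transpose{\!R}\right)^{-1}\transpose{Y}^{\frac{1}{2}}$, so the determinant ratio is $\det(\e^{2s} S\transpose{\!S} + \e^{-2s} R\transpose{\!R})$, and it bounds this by simultaneously diagonalizing $R$ and $S$ (reusing the diagonalization argument from the proof of lemma \ref{lemma:uniformSchwartz}), obtaining $\prod_j(\e^{2s}\sin^2\phi_j + \e^{-2s}\cos^2\phi_j) \asymp 1$. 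You instead apply \eqref{eq:XYQ} directly to the blocks of $g'a_s$, giving $Y(g'a_s) = (\e^{2s}C'\transpose{\!C'} + \e^{-2s}D'\transpose{\!D'})^{-1}$, and sandwich $\e^{2s}C'\transpose{\!C'} + \e^{-2s}D'\transpose{\!D'}$ between $\e^{-2}(C'\transpose{\!C'}+D'\transpose{\!D'})$ and $\e^{2}(C'\transpose{\!C'}+D'\transpose{\!D'})$ in the Loewner order, then use monotonicity of the determinant on positive definite matrices. This is more elementary (no diagonalization needed), yields the explicit constant $\e^{2n}$, and is uniform in $g'$ just as required; the paper's route has the incidental benefit of setting up the $R+\i S$ computation that it reuses in lemma \ref{lemma:heightcontinuity2}. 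Your closing remark on why the maximum in the definition of $D$ is attained (via the condition $|\det(C(X+\i Y)+D)|\geq 1$ defining $\mathcal{D}$ and \eqref{eq:Ygammag}) matches the paper's setup, so no gap remains.
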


\begin{proof}
  For arbitrary $g \in G$ and $|s|\leq 1$, we set
  \begin{equation}
    \label{eq:gsdef}
    g_s = g
    \begin{pmatrix}
      \e^s I & 0 \\
      0 & \e^{-s} I
    \end{pmatrix}
    .
  \end{equation}
  We first claim that
    \begin{equation}
    \label{eq:detcontinuity}
    \det V(g_s) \asymp \det  V(g)
  \end{equation}
  for all $g \in G$.

  As usual we have
  \begin{equation}
    \label{eq:gXYkQ}
    g =
    \begin{pmatrix}
      I & X \\
      0 & I 
    \end{pmatrix}
    \begin{pmatrix}
      Y^{\frac{1}{2}} & 0 \\
      0 & \transpose{Y}^{-\frac{1}{2}}
    \end{pmatrix}
    k(Q),
  \end{equation}
  and we note that $\det Y = \det V(g)$.
  Writing
  \begin{equation}
    \label{eq:QRealImaginary}
    Q = R + \mathrm{i} S,
  \end{equation}
  we have
  \begin{equation}
    \label{eq:gflow}
    g_s
    =
    \begin{pmatrix}
      I & X \\
      0 & I
    \end{pmatrix}
    \begin{pmatrix}
      \e^s Y^{\frac{1}{2}} R & - \e^{-s} Y^{\frac{1}{2}} S \\
      \e^s \transpose{Y}^{-\frac{1}{2}} S & \e^{-s} \transpose{Y}^{-\frac{1}{2}} R
    \end{pmatrix}
    ,
  \end{equation}
  so in view of (\ref{eq:XYQ})
  \begin{equation}
    \label{eq:detflow}
    Y(g_s) = Y^{\frac{1}{2}} \left(\e^{2s}  S\transpose{\!S} + \e^{-2s} R \transpose{\!R}  \right)^{-1} \transpose{Y}^{\frac{1}{2}}.
  \end{equation}
  The ratio of the right to the left side of (\ref{eq:detcontinuity}) is then
  \begin{equation}
    \label{eq:detratio}
    \det ( \e^{2s} S \transpose{\!S} + \e^{-2s} R \transpose{\!R}).
  \end{equation}

  Using the diagonalization argument from the proof of lemma \ref{lemma:uniformSchwartz}, we can multiply by orthogonal matrices to make $R$ and $S$ diagonal with entries $\cos \phi_j$ and $\sin \phi_j$.
  The determinant (\ref{eq:detratio}) is then
  \begin{equation}
    \label{eq:detdiagonal}
    \prod_{1\leq j \leq n} ( \e^{2s} \sin^2 \phi_j + \e^{-2s} \cos^2 \phi_j).
  \end{equation}
  Since $|s|\leq 1$, this is clearly bounded from above by a constant depending on $n$, and since $\sin^2 \phi_j$ and $\cos^2 \phi_j$ cannot both be less than $\frac{1}{2}$, it is also bounded away from $0$.
  This establishes (\ref{eq:detcontinuity}).

  Now we have
  \begin{equation}
    \label{eq:maxgamma}
    D(\Gamma g_s) = \max_{\gamma \in \Gamma} \det V(\gamma g_s) = \det V (\gamma_0 g_s)
  \end{equation}
  for some $\gamma_0 \in \Gamma$. 
  By (\ref{eq:detcontinuity}) we have $D(\Gamma g_s) \ll \det V(\gamma_0 g) \leq D(\Gamma g)$. 
  The same reasoning with $g_s$ replaced by $g$ leads to the reverse inequality, establishing (\ref{eq:heightcontinuity}). 
\end{proof}

The following lemma is similar to lemma \ref{lemma:heightcontinuity} in that we control the change in $D$ under a particular action.
Here the action is more general, however we only need to consider small neighbourhoods in $\Gamma \backslash G$.

\begin{lemma}
  \label{lemma:heightcontinuity2}
  There exists a constant $\epsilon_n > 0$ depending only on $n$ such that for all $\Gamma g \in \Gamma \backslash G$, $A \in \mathrm{GL}(n, \mathbb{R})$ satisfying $|| A - I || \leq \epsilon_n$, and symmetric $T$ satisfying $|| T || \leq \epsilon_n$, we have
  \begin{equation}
    \label{eq:heightcontinuity2}
    D \bigg( \Gamma g
    \begin{pmatrix}
      A & 0 \\
      0 & \transpose{\!A}^{-1}
    \end{pmatrix}
    \begin{pmatrix}
      I & 0 \\
      T & I
    \end{pmatrix}
    \bigg) \asymp D( \Gamma g). 
  \end{equation}
\end{lemma}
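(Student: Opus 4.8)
The plan is to reduce Lemma~\ref{lemma:heightcontinuity2} to a compactness argument, exploiting that $D$ is continuous on $\Gamma\backslash G$ and that the perturbations involved range over a compact neighbourhood of the identity in $G$. First I would note that, just as in the proof of Lemma~\ref{lemma:heightcontinuity}, it suffices to control the ratio $\det V(g\delta)/\det V(g)$ as $g$ ranges over a fundamental domain $\mathcal{D}$ and $\delta = \begin{pmatrix} A & 0 \\ 0 & \transpose{\!A}^{-1} \end{pmatrix}\begin{pmatrix} I & 0 \\ T & I \end{pmatrix}$ ranges over a small neighbourhood $N_{\epsilon_n}$ of the identity; indeed, picking $\gamma_0$ with $\gamma_0 g \in \mathcal{D}$ and writing $D(\Gamma g\delta) = \max_\gamma \det V(\gamma g\delta) \geq \det V(\gamma_0 g \delta)$, a two-sided bound $\det V(\gamma_0 g\delta) \asymp \det V(\gamma_0 g) = D(\Gamma g)$ together with the symmetric argument (swapping the roles of $g$ and $g\delta$, using $\delta^{-1} \in N_{\epsilon_n'}$ for a possibly smaller $\epsilon_n'$) yields \eqref{eq:heightcontinuity2}.

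The core claim is therefore: there is $\epsilon_n>0$ such that $\det V(g\delta) \asymp_n \det V(g)$ for all $g\in G$ and all $\delta\in N_{\epsilon_n}$. Using the Iwasawa coordinates $g = \begin{pmatrix} I & X \\ 0 & I\end{pmatrix}\begin{pmatrix} Y^{\frac12} & 0 \\ 0 & \transpose{Y}^{-\frac12}\end{pmatrix}k(Q)$ as in Lemma~\ref{lemma:heightcontinuity}, I would use the formula \eqref{eq:XYQ}, namely $Y(g') = (C'\transpose{\!C'} + D'\transpose{\!D'})^{-1}$ where $\begin{pmatrix} A' & B' \\ C' & D'\end{pmatrix} = g'$. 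Writing out the bottom block row of $g\delta$ in terms of the bottom block row $(C\ D)$ of $g$ and the entries of $A$, $T$, one gets $C' = CA + D\transpose{\!A}^{-1}T$, $D' = D\transpose{\!A}^{-1}$, so
\begin{equation}
  \label{eq:CDprimeformula}
  C'\transpose{\!C'} + D'\transpose{\!D'} = (CA+D\transpose{\!A}^{-1}T)\transpose{\!(CA+D\transpose{\!A}^{-1}T)} + D\transpose{\!A}^{-1}\transpose{\!A}^{-1}\transpose{\!D}.
\end{equation}
When $A = I$, $T = 0$ this is exactly $C\transpose{\!C}+D\transpose{\!D}$, and the map $\delta \mapsto$ (the linear transformation $M \mapsto$ the quadratic form in \eqref{eq:CDprimeformula} evaluated with $C,D$ replaced by the rows of an orthogonal frame) is continuous. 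The key point, which replaces the explicit diagonalisation used in Lemma~\ref{lemma:heightcontinuity}, is that the matrix $\begin{pmatrix} C & D \end{pmatrix}$ always has rank $n$ (since $g\in G$ forces $C\transpose{\!D}=D\transpose{\!C}$ and $A\transpose{\!D}-B\transpose{\!C}=I$, so the rows of $(C\ D)$ are linearly independent), hence $C\transpose{\!C}+D\transpose{\!D}$ is positive definite; moreover the map $g \mapsto (C\transpose{\!C}+D\transpose{\!D})/\|(C\ D)\|^2$, being a continuous function on the compact projective space of admissible frames, has eigenvalues bounded away from $0$ and $\infty$ uniformly. This uniformity lets one compare $\det(C'\transpose{\!C'}+D'\transpose{\!D'})$ with $\det(C\transpose{\!C}+D\transpose{\!D})$ by a uniform constant depending only on how close $\delta$ is to $I$.

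Concretely, I would normalise: write $C\transpose{\!C}+D\transpose{\!D} = P\transpose{\!P}$ with $P$ invertible, set $\widetilde C = P^{-1}C$, $\widetilde D = P^{-1}D$ so that $\widetilde C\transpose{\!\widetilde C}+\widetilde D\transpose{\!\widetilde D} = I$, and observe
\begin{equation}
  \label{eq:detratioheight2}
  \frac{\det V(g\delta)}{\det V(g)} = \det\!\Big( (\widetilde C A + \widetilde D\transpose{\!A}^{-1}T)\transpose{\!(\widetilde C A + \widetilde D\transpose{\!A}^{-1}T)} + \widetilde D\transpose{\!A}^{-1}\transpose{\!A}^{-1}\transpose{\!\widetilde D}\Big).
\end{equation}
The bracketed matrix is a continuous function of $(\widetilde C, \widetilde D, A, T)$ that equals $I$ at $(\widetilde C, \widetilde D, I, 0)$, and $(\widetilde C, \widetilde D)$ ranges over the compact set of $n\times 2n$ matrices with $\widetilde C\transpose{\!\widetilde C}+\widetilde D\transpose{\!\widetilde D}=I$. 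Hence by uniform continuity on this compact set there is $\epsilon_n>0$ so that $\|A-I\|\leq\epsilon_n$, $\|T\|\leq\epsilon_n$ forces \eqref{eq:detratioheight2} to lie in $[\tfrac12, 2]$, say. This gives $\det V(g\delta)\asymp_n \det V(g)$ for all $g$, and then the fundamental-domain/maximisation argument of the first paragraph upgrades this to \eqref{eq:heightcontinuity2}.

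The main obstacle, and the only place the argument needs care beyond Lemma~\ref{lemma:heightcontinuity}, is precisely the \emph{uniformity} in $g$: unlike the diagonal flow in Lemma~\ref{lemma:heightcontinuity}, the perturbation $\delta$ here does not act by a simple scaling on the pair $(R,S)$, so one cannot diagonalise away the $g$-dependence. The right fix is to observe that the relevant data $(\widetilde C, \widetilde D)$ lives in a compact space (the condition $\widetilde C\transpose{\!\widetilde C}+\widetilde D\transpose{\!\widetilde D}=I$ is compact), so a compactness-plus-continuity argument gives a single $\epsilon_n$ working for all $g$. One should double-check that $\transpose{\!A}^{-1}$ stays bounded for $\|A - I\|\leq\epsilon_n$ (true if $\epsilon_n<\tfrac12$, say, by Neumann series), which is needed to keep \eqref{eq:detratioheight2} bounded above.
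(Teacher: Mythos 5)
Your proposal is correct and is essentially the paper's own argument: the paper likewise reduces to the uniform pointwise claim $\det V(g\delta)\asymp\det V(g)$ for all $g\in G$ and proves it by expressing the bottom block row of $g$ in normalized form --- its pair $(S,R)$ coming from the unitary part of the Iwasawa decomposition, with $R\transpose{\!R}+S\transpose{\!S}=I$, is exactly your normalized $(\widetilde{C},\widetilde{D})$ --- and then observing that the perturbed Gram matrix is $I+O(\epsilon_n)$ uniformly over this compact set of frames, so its determinant is bounded above and below by constants depending only on $n$. The only blemish is that your displayed ratio $\det V(g\delta)/\det V(g)$ is actually the reciprocal of the determinant you wrote down, which is immaterial for the two-sided bound.
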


\begin{proof}
  As in the proof of lemma \ref{lemma:heightcontinuity}, it suffices to show that for all $g \in G$,
  \begin{equation}
    \label{eq:detcontinuity2}
    \det V(gg_Ag_T) \asymp \det V(g),
  \end{equation}
  where
  \begin{equation}
    \label{eq:gXdef}
    g_A =
    \begin{pmatrix}
      A & 0 \\
      0 & \transpose{\!A}^{-1}
    \end{pmatrix}
    ,\quad g_T = g
    \begin{pmatrix}
      I & 0 \\
      T & I
    \end{pmatrix}
    .
  \end{equation}
  We write
  \begin{equation}
    \label{eq:iwasawa}
    g =
    \begin{pmatrix}
      I & X \\
      0 & I
    \end{pmatrix}
    \begin{pmatrix}
      Y^{\frac{1}{2}} & 0 \\
      0 & \transpose{Y}^{-\frac{1}{2}}
    \end{pmatrix}
    \begin{pmatrix}
      R & -S \\
      S & R
    \end{pmatrix}
    ,
  \end{equation}
  where $R + \mathrm{i}S \in \mathrm{U}(n)$.
  We compute
  \begin{equation}
    \label{eq:YgTinverse}
    Y(g g_A g_T)^{-1} \\
    = \transpose{Y}^{-\frac{1}{2}} \left( ( SA + R\transpose{\!A}^{-1}T) \transpose{\!( SA + R\transpose{\!A}^{-1}T)} + R\transpose{\!A}^{-1} A^{-1} \transpose{\!R} \right) Y^{-\frac{1}{2}},
  \end{equation}
  so the ratio of the left and right sides of (\ref{eq:detcontinuity2}) is
  \begin{align}
    \label{eq:detratio1}
    & \det(  ( SA + R\transpose{\!A}^{-1}T) \transpose{\!( SA + R\transpose{\!A}^{-1}T)} + R\transpose{\!A}^{-1} A^{-1} \transpose{\!R} ) \nonumber \\
    & \quad = \det \big( SA \transpose{\!A}\transpose{\!S} + R\transpose{\!A}^{-1} A^{-1} \transpose{\!R} \nonumber \\
    & \qquad\qquad + R \transpose{\!A}^{-1} T \transpose{\!A} \transpose{\!S} + S A T A^{-1} \transpose{\!R} + R \transpose{\!A}^{-1}  T^2 A^{-1} \transpose{\!R} \big).
  \end{align}

  Recalling that $R\transpose{\!R} + S\transpose{\!S} = I$, we have
  \begin{equation}
    \label{eq:AnearI}
    SA \transpose{\!A}\transpose{\!S} + R\transpose{\!A}^{-1} A^{-1} \transpose{\!R} = I + O ( \epsilon_n)
  \end{equation}
  if $|| A - I || \leq \epsilon_n$.
  It follows that $\epsilon_n$ can be made sufficiently small so that the symmetric matrix
  \begin{equation}
    \label{eq:Iminusmatrix}
    - I + SA \transpose{\!A}\transpose{\!S} + R\transpose{\!A}^{-1} A^{-1} \transpose{\!R} \\
    + R \transpose{\!A}^{-1} T \transpose{\!A} \transpose{\!S} + S A T A^{-1} \transpose{\!R} + R \transpose{\!A}^{-1}  T^2 A^{-1} \transpose{\!R} 
  \end{equation}
  has all eigenvalues less than $\frac{1}{n}$, say, in absolute value, and (\ref{eq:detcontinuity2}) follows. 
\end{proof}

\subsection{Proof of theorem \ref{theorem:thetasumboundssmooth}}
\label{sec:bound00}

In this section we sketch a proof of theorem \ref{theorem:thetasumboundssmooth}, appealing to the method in \cite{KleinbockMargulis1999}.
A complete proof of theorem \ref{theorem:thetasumboundssmooth} can be obtained from the proof of theorem \ref{theorem:thetasumbounds} in the following section by only considering the $\bm{j}=(0,\ldots,0)$ term in the dyadic expansion, for a general Schwartz function $f$ rather than the compactly supported function $f_n$ considered.

We have
\begin{equation}
  \label{eq:thetafThetaf}
  \theta_f(M, X, \bm{x}, \bm{y}) = M^{\frac{n}{2}} \Theta_f(h, g_{M, X})
\end{equation}
where $h = (\bm{x}, \bm{y}, 0)$ and
\begin{equation}
  \label{eq:gMX}
  g_{M, X}  =
  \begin{pmatrix}
    I & X \\
    0 & I
  \end{pmatrix}
  \begin{pmatrix}
    M^{-1} I & 0 \\
    0 & M I
  \end{pmatrix}
  .
\end{equation}
By corollary \ref{corollary:upperbound} we have
\begin{equation}
  \label{eq:thetabound0}
  \theta_f(M, X, \bm{x}, \bm{y}) \ll_f M^{\frac{n}{2}}D(\Gamma g_{M, X})^{\frac{1}{4}}.
\end{equation}

Now for the proof of the easy part of theorem 1.7 in \cite{KleinbockMargulis1999}, we observe that one only needs the upper bound in lemma \ref{lemma:volumecalculation} instead of the matching lower bound in the definition of $\frac{n+1}{2}$-DL functions.
In addition, one does not need that the function $\log D(\Gamma g)$ be uniformly continuous; lemma \ref{lemma:heightcontinuity} suffices.
We therefore have that
\begin{equation}
  \label{eq:Dbound}
  D\left(\Gamma g
  \begin{pmatrix}
    M^{-1} I & 0 \\
    0 & M I 
  \end{pmatrix}
  \right)^{\frac{1}{4}} \ll_g  \psi(\log M)
\end{equation}
for almost all $\Gamma g \in \Gamma \backslash G$ as long as
\begin{equation}
  \label{eq:psiconverge}
  \sum_{k\geq 0} \psi(k)^{-2n -2} < \infty
\end{equation}
with $ \psi: [0, \infty) \to [1, \infty)$ increasing.

To finish our proof sketch, we consider the set of $X \in \mathbb{R}^{n\times n}_{\mathrm{sym}}$ such that there exist $A\in \mathrm{GL}(n, \mathbb{R})$ with $|| A - I|| \leq \epsilon_n$ and $T \in \mathbb{R}^{n\times n}_{\mathrm{sym}}$ with $|| T|| \leq \epsilon_n$ so that
\begin{equation}
  \label{eq:nearbyg}
  \Gamma g = \Gamma
  \begin{pmatrix}
    I & X \\
    0 & I
  \end{pmatrix}
  \begin{pmatrix}
    A & 0 \\
    0 & \transpose{A}^{-1}
  \end{pmatrix}
  \begin{pmatrix}
    I & 0 \\
    T & I
  \end{pmatrix}
\end{equation}
satisfies the bound (\ref{eq:Dbound}).
From the Haar measure calculation (\ref{eq:XATHaar}) we see that this set of $X$ has full measure, and from lemma \ref{lemma:heightcontinuity2} and (\ref{eq:Dbound}) we see that
\begin{equation}
  \label{eq:Dbound1}
  D(\Gamma g_{M, X})^{\frac{1}{4}} \ll_X \psi(\log M)
\end{equation}
for all $X$ in this set.
Theorem \ref{theorem:thetasumboundssmooth} then follows. 


\subsection{Proof of theorem \ref{theorem:thetasumbounds}}
\label{sec:bound}


We record the following lemma that dyadically decomposes the indicator function of the open interval $(0, 1)$, noting that both the singularities at $0$ and $1$ need to be resolved.

\begin{lemma}
  \label{lemma:cutoff}
  There exists a smooth, compactly supported function $f_1 : (0,1) \to \mathbb{R}_{\geq 0}$ such that
  \begin{equation}
    \label{eq:chidecomp}
    \chi_1(x) = \sum_{j \geq 0} \left( f_1\left( 2^j x\right) +  f_1\left( 2^j( 1 -x)\right)\right),
  \end{equation}
  where $\chi_1$ is the indicator function of the open unit interval $(0,1)$. 
\end{lemma}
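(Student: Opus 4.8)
The plan is to build $f_1$ by a telescoping construction. Start from any smooth function $\phi:\mathbb{R}\to[0,1]$ that equals $1$ on $[1,\infty)$ and equals $0$ on $(-\infty,\tfrac12]$, and set $g(x) = \phi(x) - \phi(2x)$. Then $g$ is smooth, supported in $[\tfrac14,1]$, nonnegative, and for any $x>0$ the telescoping sum $\sum_{j\geq 0} g(2^j x) = \lim_{J\to\infty}\big(\phi(x) - \phi(2^{J+1}x)\big) = \phi(x)$, since $2^{J+1}x\to\infty$. So $\sum_{j\geq 0} g(2^j x)$ is $0$ for $x\leq 0$ and equals $\phi(x)$ for $x>0$; in particular it equals $1$ once $x\geq 1$. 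This gives a dyadic resolution of the singularity at the origin, but it does not yet produce the indicator of $(0,1)$ because the tail $\phi(x)=1$ persists for $x\geq 1$.

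To cut off the far end, I would instead aim for a function that resolves $\chi_1$ directly using the symmetry between the two endpoints. Define $\psi_0(x) = \phi(x)\,\phi(1-x)$ — wait, this is smooth and supported in $[\tfrac12, \tfrac12]$... in fact $\phi(x)=1$ needs $x\geq1$ while $\phi(1-x)=1$ needs $x\leq0$, so that product is not what we want. The cleaner route: work on the interval and use that $\chi_1(x) = \chi_{(0,1/2]}(x) + \chi_{(1/2,1)}(x)$, resolving the first piece by dyadic scaling toward $0$ and the second by dyadic scaling toward $1$. Concretely, pick a smooth $\eta:\mathbb{R}\to[0,1]$ with $\eta = 1$ on $[\tfrac12,\infty)$ and $\eta = 0$ on $(-\infty,\tfrac14]$, and set $f_1(x) = \big(\eta(x)-\eta(2x)\big)\,\mathbbm{1}_{(0,1/2]}(\text{region})$ — the issue is keeping $f_1$ smooth while its support stops at $x=1/2$. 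So the honest construction is: let $h(x) = \eta(x) - \eta(2x)$, supported in $[\tfrac18, \tfrac12]$, and note $\sum_{j\geq 0} h(2^j x)$ equals $\eta(x)$ for $x>0$ and $0$ for $x\leq 0$, hence equals $1$ for $x\in[\tfrac12,\infty)$ and is smooth and supported in $(0,\tfrac12]$ for the individual summands but the infinite sum reconstructs a function that is $1$ on $[\tfrac12,1)$ — not yet supported away from $1$. Combining with the reflected version $\sum_j h(2^j(1-x))$, which is $1$ for $x\in(0,\tfrac12]$, we get $\sum_{j\geq0}\big(h(2^jx)+h(2^j(1-x))\big) = \eta(x) + \eta(1-x)$, and for $x\in(0,1)$ at least one of $x,1-x$ lies in $[\tfrac12,1)$, so this sum is $1$ on all of $(0,1)$, is $0$ outside $[0,1]$, and each term is smooth with compact support in $(0,1)$. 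The only remaining point is $x\in\{0,1\}$, where every term vanishes and so does $\chi_1$. Taking $f_1 = h$ finishes it.

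The main obstacle is purely bookkeeping: verifying that each summand $h(2^j x)$ and $h(2^j(1-x))$ has support strictly inside $(0,1)$ (so that $f_1:(0,1)\to\mathbb{R}_{\geq0}$ is well-defined as stated), that the pointwise sum converges — which is automatic since for each fixed $x$ only finitely many terms are nonzero, because $\mathrm{supp}(h)\subset[\tfrac18,\tfrac12]$ forces $2^j x\in[\tfrac18,\tfrac12]$ for at most one $j$ — and that the reconstructed total equals $\eta(x)+\eta(1-x)\equiv 1$ on $(0,1)$. I would state the telescoping identity $\sum_{j=0}^{J} h(2^j x) = \eta(x) - \eta(2^{J+1}x)$ explicitly and let $J\to\infty$, using $\eta\equiv 0$ near $-\infty$ and $\eta\equiv 1$ near $+\infty$ to identify the limit; the endpoint check $x=0,1$ is immediate. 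No deep input is needed — the lemma is elementary — so the write-up is short, with the dyadic telescoping being the only idea.
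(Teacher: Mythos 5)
Your telescoping idea is viable, but as written the proof does not go through, for three concrete reasons. First, the sign is backwards: with $h(x)=\eta(x)-\eta(2x)$ and $\eta$ as you chose it, $h(1/4)=\eta(1/4)-\eta(1/2)=0-1=-1$, so $h$ is not nonnegative and cannot serve as $f_1$. Second, the telescoping limit is evaluated incorrectly: $\sum_{j=0}^{J}h(2^jx)=\eta(x)-\eta(2^{J+1}x)$, and since $\eta\to 1$ (not $0$) at $+\infty$, this tends to $\eta(x)-1$ for $x>0$, not to $\eta(x)$; in particular the partial resolution is $0$, not $1$, on $[\tfrac12,\infty)$. Third, and most importantly, the final identity fails: the combined sum on $(0,1)$ is $\eta(x)+\eta(1-x)-2$ (or $2-\eta(x)-\eta(1-x)$ after fixing the sign), and your justification that $\eta(x)+\eta(1-x)\equiv 1$ --- ``at least one of $x,1-x$ lies in $[\tfrac12,1)$'' --- only shows the sum is $\geq 1$, not $=1$. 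With your thresholds the construction genuinely breaks: $h$ is supported in $[\tfrac18,\tfrac12]$ and vanishes at and beyond $\tfrac12$, so at $x=\tfrac12$ every term $h(2^jx)$ and $h(2^j(1-x))$ is zero and the right-hand side of (\ref{eq:chidecomp}) equals $0$ while $\chi_1(\tfrac12)=1$; the identity in fact fails on a whole neighbourhood of $\tfrac12$, where the sum equals $1-\eta(x)$ or $1-\eta(1-x)<1$. (A small further slip: since $\mathrm{supp}\,h\subset[\tfrac18,\tfrac12]$ spans two octaves, up to three values of $j$, not one, can contribute at a given $x$; this is harmless but symptomatic.)

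The missing idea is exactly the complementarity condition the paper builds in: you need $\eta(x)+\eta(1-x)=1$ on $(0,1)$ so that the left and right dyadic resolutions interlock with neither gap nor overlap, and this is incompatible with requiring $\eta\equiv 1$ on $[\tfrac12,\infty)$ (it would force $\eta(\tfrac12)=\tfrac12$). Take instead $\eta$ smooth and nondecreasing with $\eta=0$ on $(-\infty,\tfrac13]$, $\eta=1$ on $[\tfrac23,\infty)$ and $\eta(x)+\eta(1-x)=1$, and set $f_1(x)=\eta(2x)-\eta(x)\geq 0$, supported in $[\tfrac16,\tfrac23]$. Then $\sum_{j\geq0}f_1(2^jx)=1-\eta(x)$ for $x>0$ and $0$ for $x\leq 0$, so the two-sided sum equals $2-\eta(x)-\eta(1-x)=1$ on $(0,1)$ and $0$ elsewhere, which proves the lemma. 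Once repaired in this way your argument coincides with the paper's proof: with the paper's $f_0$ satisfying (\ref{eq:f0condtion}) and $\eta(x)=f_0(3x-1)$, the difference $\eta(2x)-\eta(x)$ is precisely the $f_1$ of (\ref{eq:f1def}), and the paper's two-consecutive-scales computation is the same interlocking you need; so the telescoping route buys nothing different, and in its current form it rests on a false identity.
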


We note that \cite{CellarosiMarklof2016} has an explicit construction of a function $f_1$ satisfying (\ref{eq:chidecomp}) that is however only twice differentiable.
As we make no effort here to determine constants in our estimates, we sacrifice explicitness for smoothness. 

\begin{proof}
  We let $f_0$ be a non-negative, smooth function such that $f_0(x) = 0$ for $x \leq 0$, $f_0(x) = 1$ for $x \geq 1$, and
  \begin{equation}
    \label{eq:f0condtion}
    f_0(x) + f_0(1 -x) = 1
  \end{equation}
  for all $0\leq x \leq 1$.
  We then define the smooth function $f_1$ on the interval by
  \begin{equation}
    \label{eq:f1def}
    f_1(x) =
    \begin{cases}
      0 & \mathrm{if\ } x \leq \frac{1}{6} \\
      f_0( 6x - 1) & \mathrm{if\ } \frac{1}{6} \leq x \leq \frac{1}{3} \\
      f_0( 2 - 3x) & \mathrm{if\ } \frac{1}{3} \leq x \leq \frac{2}{3} \\
      0 & \mathrm{if\ } \frac{2}{3} \leq x. 
    \end{cases}
  \end{equation}

  Let us now consider the expression
  \begin{equation}
    \label{eq:f1sum}
    \sum_{j \geq 0} \left( f_1(2^j x) + f_1( 2^j(1 -x)) \right),
  \end{equation}
  which is clearly $0$ if $x \not\in (0,1)$.
  If $ 0 < x \leq \frac{1}{3}$ then $f_1(2^j(1 -x)) = 0$ for all $j \geq 0$, and $f_1(2^jx)$ is nonzero for exactly two values of $j\geq 0$, say $j_0$ and $j_0 +1$.
  We have
  \begin{equation}
    \label{eq:j0term}
    f_1(2^{j_0} x) + f_1(2^{j_0 + 1} x) = f_0( 2^{j_0} 6x - 1) + f_0( 2 - 2^{j_0} 6 x) = 1
  \end{equation}
  by (\ref{eq:f0condtion}).
  We similarly find that (\ref{eq:f1sum}) is $1$ for $\frac{2}{3} \leq x < 1$.
  When $\frac{1}{3} \leq x \leq \frac{2}{3}$, only the $j = 0$ term in (\ref{eq:f1sum}) is nonzero.
  We have
  \begin{equation}
    \label{eq:f1middle}
    f_1(x) + f_1(1 -x) = f_0( 2 - 3x) + f_0( 3x - 1) = 1
  \end{equation}
  by the condition (\ref{eq:f0condtion}). 
\end{proof}

For a subset $S \subset \{ 1, \dots, n\}$ and $\bm{j} = (j_1, \dots, j_n) \in \mathbb{Z}^n$ with $j_i \geq 0$, we define
\begin{equation}
  \label{eq:gjdef}
  g_{\bm{j}, S} =
  \begin{pmatrix}
    A_{\bm{j}}E_S & 0 \\
    0 & A_{\bm{j}}^{-1}E_S
  \end{pmatrix}
  \in G,
\end{equation}
where $E_S$ is diagonal with $(i,i)$ entry $-1$ if $i \in S$, $+1$ if $i \not\in S$ and 
\begin{equation}
  \label{eq:Ajdef}
  A_{\bm{j}} =
  \begin{pmatrix}
    2^{j_1} & \cdots & 0 \\
    \vdots & \ddots & \vdots \\
    0 & \cdots & 2^{j_n}
  \end{pmatrix}
  .
\end{equation}
We also set
\begin{equation}
  \label{eq:hSdef}
  h_S = ( \bm{x}_S, 0, 0) \in H
\end{equation}
where $\bm{x}_S$ has $i$th entry $-1$ if $i \in S$ and $0$ if $i \not\in S$.

We observe that from lemma \ref{lemma:cutoff} we have
\begin{equation}
  \label{eq:chidecomp1}
  \chi(\bm{x}) = \sum_{\bm{j} \geq 0} \sum_S f_n\left( ( \bm{x} + \bm{x}_S) E_S A_{\bm{j}} \right),
\end{equation}
where $\chi$ is the indicator function of the open unit cube $(0,1)^n$,
\begin{equation}
  \label{eq:fdef}
  f_n(x_1, \dots, x_n) = \prod_{1\leq j \leq n} f_1(x_j),
\end{equation}
and the sums are over $\bm{j} \in \mathbb{Z}^n$ with nonnegative entries and all subsets $S$ of $\{1, \dots, n\}$.
The characteristic function of the rectangular box $\mathcal{B}=(0,b_1)\times\cdots\times(0,b_n)$ is therefore
\begin{equation}
  \label{eq:chidecomp1bbb}
  \chi_\mathcal{B}(\bm{x}) = \chi(\bm{x} B^{-1}) = \sum_{\bm{j} \geq 0} \sum_S f_n\left( ( \bm{x}B^{-1} + \bm{x}_S) E_S A_{\bm{j}}  \right),
\end{equation}
where $B$ is the diagonal matrix with coefficients $b_1,\ldots,b_n$.

Recalling the Schr\"odinger representation $W$ and the Segal-Shale-Weil representation $R$, we have
\begin{equation}
  \label{eq:chirepdecomp}
  \chi(\bm{x}) = \sum_{\bm{j} \geq 0} \sum_{S} 2^{-\frac{1}{2}( j_1 + \cdots + j_n)} \left(W(h_S) R(g_{\bm{j}, S}) f_n\right) ( \bm{x})
\end{equation}
and
\begin{equation}
  \label{eq:chirepdecompbbb}
  \begin{split}
  \chi_\mathcal{B}(\bm{x}) & = (\det B)^{\frac12} (R(\begin{pmatrix} B^{-1} & 0 \\ 0 & B\end{pmatrix} ) \chi)(\bm{x})  
  \\ 
  & = (\det B)^{\frac12}  \sum_{\bm{j} \geq 0} \sum_{S} 2^{-\frac{1}{2}( j_1 + \cdots + j_n)} \left(R(\begin{pmatrix} B^{-1} & 0 \\ 0 & B\end{pmatrix}) W(h_S) R(g_{\bm{j}, S}) f_n\right) ( \bm{x}) .
  \end{split}
\end{equation}

We note that for $(h, g) \in H\rtimes G$,
\begin{equation}
  \label{eq:reprearrange}
  W(h) R(g) W(h_S) R(g_{\bm{j}, S}) = W( h h_S^{g^{-1}}) R(g g_{\bm{j}, S}),
\end{equation}
and so, as we are interested in the theta sums (\ref{eq:SMXxydef}) with the sharp cutoff $\chi$, it is natural to consider the expression
\begin{equation}
  \label{eq:thetachidef}
  \tilde{\Theta}_\chi (h, g) = \sum_{\bm{j}\geq 0} \sum_S 2^{-\frac{1}{2}( j_1 + \cdots + j_n)} \Theta_{f_n}\left( h h_S^{g^{-1}}, g g_{\bm{j}, S}\right).
\end{equation}
The convergence of this expression for almost every $g \in G$ is a corollary of lemma \ref{lemma:thetajbound} below.

Motivated by bounding (\ref{eq:thetachidef}) via corollary \ref{corollary:upperbound}, for $C > 0$ and $\psi : [0,\infty) \to [1, \infty)$ an increasing function, we define $\mathcal{G}_{\bm{j}}(\psi, C)$ to be the set of $\Gamma g \in \Gamma \backslash G$ such that
\begin{equation}
  \label{eq:GpsiCdef}
  D\bigg( \Gamma g g_{\bm{j}, S}
  \begin{pmatrix}
    \e^{-s} I & 0 \\
    0 & \e^{s}I 
  \end{pmatrix}
  \bigg) \leq C^4\psi(s)^4
\end{equation}
for all  $S \subset \{1, \cdots, n\}$, and $s \geq 1$. 

\begin{lemma}
  \label{lemma:thetajbound}
  Suppose $\psi$ satisfies
  \begin{equation}
    \label{eq:psisum}
    \sum_{k \geq 0} \psi(k)^{-(2n + 2)} \leq C_\psi
  \end{equation}
  for some $C_\psi > 0$.
  Then 
  \begin{equation}
    \label{eq:badgset}
    \mu \left( \Gamma \backslash G - \mathcal{G}_{\bm{j}}(\psi, C) \right) \ll C_\psi C^{-( 2n + 2)}
  \end{equation}
  with the implied constant depending only on $n$. 
\end{lemma}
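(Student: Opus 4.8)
The plan is to run the easy half of the Kleinbock--Margulis Borel--Cantelli argument. The only inputs are lemma \ref{lemma:volumecalculation} (the bound $\mu(\{D \geq R\}) \ll R^{-(n+1)/2}$), lemma \ref{lemma:heightcontinuity} (quasi-invariance of $D$ under the geodesic flow by a bounded amount), and the fact that the measure $\mu$ on $\Gamma\backslash G$ induced by Haar measure is invariant under right translation by $G$. Throughout, write $b_s = \begin{pmatrix} \e^{-s} I & 0 \\ 0 & \e^{s} I \end{pmatrix}$; this is the matrix appearing in the definition of $\mathcal{G}_{\bm{j}}(\psi,C)$ and, after reversing the sign of $s$, the one-parameter geodesic subgroup of lemma \ref{lemma:heightcontinuity}.

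First I would write the complement as a countable union. Directly from the definition of $\mathcal{G}_{\bm{j}}(\psi, C)$,
\begin{equation}
  \Gamma\backslash G - \mathcal{G}_{\bm{j}}(\psi, C) = \bigcup_{S \subset \{1, \dots, n\}} \bigcup_{s \geq 1} \{ \Gamma g : D( \Gamma g\, g_{\bm{j}, S} \,b_s ) > C^4 \psi(s)^4 \}.
\end{equation}
Since there are only $2^n$ subsets $S$, it suffices to bound the inner union for each fixed $S$ and absorb the factor $2^n$ into an $n$-dependent constant. For a fixed $S$ I would discretize the flow parameter: for an integer $k \geq 1$ and $s \in [k, k+1]$ write $s = k + s'$ with $s' \in [0,1]$ and factor $b_s = b_k b_{s'}$. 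Applying lemma \ref{lemma:heightcontinuity} to the point $\Gamma g\, g_{\bm{j},S}\, b_k$ gives $D(\Gamma g\, g_{\bm{j},S}\, b_s) \asymp_n D(\Gamma g\, g_{\bm{j},S}\, b_k)$, and since $\psi$ is increasing, $\psi(s) \geq \psi(k)$. Hence there is $c_n > 0$, depending only on $n$, with
\begin{equation}
  \bigcup_{s \geq 1} \{ \Gamma g : D(\Gamma g\, g_{\bm{j},S}\, b_s) > C^4 \psi(s)^4 \} \subset \bigcup_{k \geq 1} \{ \Gamma g : D(\Gamma g\, g_{\bm{j},S}\, b_k) \geq c_n C^4 \psi(k)^4 \}.
\end{equation}

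Next I would use right-invariance: since $\Gamma g \mapsto \Gamma g\, g_{\bm{j},S}\, b_k$ is a $\mu$-preserving transformation of $\Gamma\backslash G$, lemma \ref{lemma:volumecalculation} yields
\begin{multline}
  \mu\big( \{ \Gamma g : D(\Gamma g\, g_{\bm{j},S}\, b_k) \geq c_n C^4 \psi(k)^4 \} \big) = \mu\big( \{ \Gamma g : D(\Gamma g) \geq c_n C^4 \psi(k)^4 \} \big) \\ \ll_n \big( c_n C^4 \psi(k)^4 \big)^{-\frac{n+1}{2}} \ll_n C^{-(2n+2)} \psi(k)^{-(2n+2)}.
\end{multline}
Summing over $k \geq 1$ and invoking the hypothesis $\sum_{k \geq 0} \psi(k)^{-(2n+2)} \leq C_\psi$ bounds the inner union by $\ll_n C_\psi C^{-(2n+2)}$ for each $S$; summing over the $2^n$ subsets $S$ then gives \eqref{eq:badgset}.

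There is no genuine obstacle here; the one point to handle with care is that the discretization of $s \geq 1$ into unit intervals must be matched to lemma \ref{lemma:heightcontinuity}, and that every implied constant ($c_n$, those from lemmas \ref{lemma:volumecalculation} and \ref{lemma:heightcontinuity}, and the combinatorial factor $2^n$) depends on $n$ alone. In particular the estimate is uniform in $\bm{j}$, since $\bm{j}$ enters only through the measure-preserving right translation by $g_{\bm{j},S}\, b_k$ — this uniformity is precisely what is needed when the dyadic pieces indexed by $\bm{j}$ are reassembled in \eqref{eq:thetachidef}.
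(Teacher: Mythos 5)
Your proof is correct and follows essentially the same route as the paper: decompose the complement over the finitely many $S$ and discretize $s$ into integers via lemma \ref{lemma:heightcontinuity}, then apply lemma \ref{lemma:volumecalculation} together with right-invariance of $\mu$ and sum over $k$ using \eqref{eq:psisum}. The uniformity in $\bm{j}$ via measure-preserving right translation is exactly as in the paper's argument.
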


\begin{proof}
  Suppose $\Gamma g \not\in \mathcal{G}_{\bm{j}}(\psi, C)$, so
  \begin{equation}
    \label{eq:Dinequality1}
    D\bigg( \Gamma g g_{\bm{j}, S}
    \begin{pmatrix}
      \e^{-s} I & 0 \\
      0 & \e^s I
    \end{pmatrix}
    \bigg) \geq C^4 \psi(s)^4
  \end{equation}
  for some $S \subset \{1, \dots, n\}$ and $s \geq 1$.
  Applying lemma \ref{lemma:heightcontinuity} and the fact that $\psi$ is increasing, we find that there is an integer $k \geq 0$ such that
  \begin{equation}
    \label{eq:Dinequality2}
    D\bigg( \Gamma g g_{\bm{j}, S}
    \begin{pmatrix}
      \e^{-k} I & 0 \\
      0 & \e^k I
    \end{pmatrix}
    \bigg) \gg C^4  \psi(k)^4.
  \end{equation}  
  Applying lemma \ref{lemma:volumecalculation} together with the fact that right multiplication is volume preserving, we have that the volume of the set of $\Gamma g \in \Gamma \backslash G$ satisfying (\ref{eq:Dinequality2}) for a particular $S$ and $k$ is
  \begin{equation}
    \label{eq:volumebound1}
    \ll C^{- (2n + 2)} \psi(k)^{-(2n + 2)}.
  \end{equation}
  We obtain the required estimate by bounding the volume of the union of these sets over $S$ and $k$ by the sum of (\ref{eq:volumebound1}) over the relevant ranges.
\end{proof}

We now have all the ingredients for the proof of theorem \ref{theorem:thetasumbounds}. 

\begin{proof}[Proof of theorem \ref{theorem:thetasumbounds}]
  From (\ref{eq:chidecomp1}) we express $\theta_{\mathcal{B}}(M, X, \bm{x}, \bm{y})$ as 
  \begin{equation}
    \label{eq:Sdecomp}
    \sum_{S \subset \{1, \dots, n\}} \sum_{\bm{j} \geq 0} f_n \left( \frac{1}{M}(\bm{m} + \bm{x} + M\bm{x}_S B) B^{-1} E_S A_{\bm{j}}\right) \\
    \e\left( \frac{1}{2}( \bm{m} + \bm{x}) X \transpose{\!(\bm{m} + \bm{x})} + \bm{m} \transpose{\!\bm{y}} \right).
  \end{equation}
  Using (\ref{eq:chirepdecomp}), (\ref{eq:reprearrange}) we break the inner sum of (\ref{eq:Sdecomp}) as 
  \begin{multline}
    \label{eq:jsmalllarge}
    M^{\frac{n}{2}} (\det B)^{\frac12} \sum_{\substack{\bm{j} \geq 0 \\ 2^{j_i} b_{j_i}^{-1} \leq  M}}  2^{-\frac{1}{2}( j_1 + \cdots + j_n)} \Theta_{f_n}\left( h h_S^{g_{MB,X}^{-1}},  g_{MB,X} g_{\bm{j}, S} \right) \\
    +  \sum_{\substack{ \bm{j} \geq 0 \\ \max_i 2^{j_i} b_{j_i}^{-1} > M }} f_n \left( \frac{1}{M}(\bm{m} + \bm{x}+ M\bm{x}_S B) B^{-1} E_S A_{\bm{j}}\right) \e\left( \frac{1}{2}( \bm{m} + \bm{x}) X \transpose{\!(\bm{m} + \bm{x})} + \bm{m} \transpose{\!\bm{y}} \right),
  \end{multline}
  where $h = (\bm{x}, \bm{y}, 0)$ and
  \begin{equation}
    \label{eq:gXdef1}
    g_{B,X} =
    \begin{pmatrix}
      I & X \\
      0 & I
    \end{pmatrix}
    \begin{pmatrix}
      B^{-1} & 0 \\
      0 & B
    \end{pmatrix}
    .
  \end{equation}
  
  We first consider the second line of (\ref{eq:jsmalllarge}).
  Suppose that $L \subset \{1, \dots, n\}$ is not empty and that $2^{j_l} > b_{j_l} M$ for all $l \in L$.
  Then the compact support of $f_1$ implies that the sum over $\bm{m}^{(L)}$, the vector of entries of $\bm{m}$ with index in $L$, has a bounded number of terms.
  We write
  \begin{multline}
    \label{eq:miform}
    (\bm{m} + \bm{x}) X \transpose{\!( \bm{m} + \bm{x})} = (\bm{m}^{(L)} + \bm{x}^{(L)}) X^{(L,L)} \transpose{\!(\bm{m}^{(L)} + \bm{x}^{(L)})} \\
    + 2 (\bm{m}^{(L)} + \bm{x}^{(L)}) X^{(L,L')} \transpose{\!(\bm{m}^{(L')} + \bm{x}^{(L')})} +  (\bm{m}^{(L')} + \bm{x}^{(L')}) X^{(L',L')} \transpose{\!(\bm{m}^{(L')} + \bm{x}^{(L')})},
  \end{multline}
  where $L'$ is the complement of $L$, and $X^{(L_1,L_2)}$ is the matrix of entries of $X$ with row and column indices in $L_1$ and $L_2$ respectively.
  We have that $ f_n\left( \frac{1}{M} (\bm{m} + \bm{x} + M \bm{x}_S B) B^{-1} E_S A_{\bm{j}} \right) $ factors as
  \begin{multline}
    \label{eq:fLfLprime}
    f_{\# L} \left( \frac{1}{M} ( \bm{m}^{(L)} + \bm{x}^{(L)} + M\bm{x}_S^{(L)} ) (B^{(L,L)})^{-1} E_S^{(L,L)} A_{\bm{j}}^{(L,L)} \right) \\
    \times f_{\# L'} \left( \frac{1}{M}( \bm{m}^{(L')} + \bm{x}^{(L')} + M\bm{x}_S^{(L')} ) (B^{(L',L')})^{-1} E_S^{(L',L')} A_{\bm{j}}^{(L',L')} \right),
  \end{multline}
  and so, by inclusion-exclusion and the boundedness of $f_{\#L}$, the terms $\bm{j}$ of (\ref{eq:Sdecomp}) with $\bm{j}_l > b_{j_i} M$ for some $i$ is at most a constant (depending only on $n$) times
  \begin{equation}
    \label{eq:jlarge}
    \sum_{\substack{L \subset \{1, \dots, n\} \\ L \neq \emptyset}} \sum_{S \subset L} \sum_{\bm{m}^{(L)}} \big|\theta_{\mathcal{B}^{(L')}}(M, X^{L',L'}, \bm{x}^{(L')}, \bm{y}^{(L')} + (\bm{m}^{(L)} + \bm{x}^{(L)}) X^{(L,L')})\big|,
  \end{equation}
  where the sum over $\bm{m}^{(L)}$ has a bounded number of terms, $\mathcal{B}^{(L')}$ is the edge of $\mathcal{B}$ associatated to $L'$, and we have used the decomposition (\ref{eq:chidecomp1}) to express $\theta_{\mathcal{B}^{(L')}}(M, X^{L',L'}, \bm{x}^{(L')}, \bm{y}^{(L')} + (\bm{m}^{(L)} + \bm{x}^{(L)}) X^{(L,L')})$ as
  \begin{multline}
    \label{eq:reversedecomp}
    \sum_{S' \subset L'} \sum_{\bm{j}_{L'}} \sum_{\bm{m}_{L'}} f_{\# L'} \left( \frac{1}{M}( \bm{m}^{(L')} + \bm{x}^{(L')} + M\bm{x}_S^{(L')} ) (B^{(L',L')})^{-1} E_S^{(L',L')} A_{\bm{j}}^{(L',L')} \right) \\
    \times \e\left( \tfrac{1}{2}(\bm{m}^{(L')} + \bm{x}^{(L')}) X^{(L',L')} \transpose{\!(\bm{m}^{(L')} + \bm{x}^{(L')})} + \bm{m}^{(L')} \transpose{( \bm{y}^{(L')} + (\bm{m}^{(L)} + \bm{x}^{(L)}) X^{(L,L')})} \right).
  \end{multline}

  When $L = \{1, \dots, n\}$ or $n =1$, the corresponding part of (\ref{eq:jlarge}) is clearly bounded.
  Proceeding by induction on $n > 1$, for any other $L$, there are full measure subsets $\mathcal{X}^{(n - \#L)}$ such that if $X^{(L',L')} \in \mathcal{X}^{(n- \#L)}$, the corresponding part of (\ref{eq:jlarge}) is $\ll M^{ \frac{n - \#L}{2} + \epsilon}$ for any $\epsilon > 0$.
  It follows that (\ref{eq:jlarge}) is $\ll M^{\frac{n}{2}}$ assuming that $X$ is such that $X^{(L',L')} \in \mathcal{X}^{(n - \#L)}$ for all nonempty $L \subset \{1, \dots, n\}$.

  We now consider the part of (\ref{eq:jsmalllarge}) with $\bm{j}$ such that $2^{j_i} b_{j_i}^{-1} \leq  M$.
    We set $\mathcal{X}_{\bm{j}}(\psi, C)$ to be the set of $X \in \mathbb{Z}^{n\times n}_{\mathrm{sym}} \backslash \mathbb{R}^{n\times n}_{\mathrm{sym}}$ such that there exist $A \in \mathrm{GL}(n, \mathbb{R})$ and $T \in \mathbb{R}^{n\times n}_{\mathrm{sym}}$ satisfying $\sup_{B\in\mathcal{K}}|| A_{\bm{j}}^{-1} B A A_{\bm{j}}  - I || \leq \epsilon_n$, $||T|| \leq \epsilon_n$, and
  \begin{equation}
    \label{eq:ATcondition}
    \Gamma g = \Gamma
    \begin{pmatrix}
      I & X \\
      0 & I
    \end{pmatrix}
    \begin{pmatrix}
      A & 0 \\
      0 & \transpose{\!A}^{-1} 
    \end{pmatrix}
    \begin{pmatrix}
      I & 0 \\
      T & I
    \end{pmatrix}
    \in \mathcal{G}_{\bm{j}}(\psi, C).
  \end{equation}
  Here $\mathcal{K}$ is the compact subset in theorem \ref{theorem:thetasumbounds} identified with the compact subset of diagonal matrices $B$ in $\mathrm{GL}(n, \mathbb{R})$ in the obvious way.
  We then set
  \begin{multline}
    \label{eq:Xpsidef}
    \mathcal{X}(\psi) = \left( \mathbb{Z}^{n\times n}_{\mathrm{sym}}  + \bigcup_{C > 0} \bigcap_{\bm{j} \geq 0} \mathcal{X}_{\bm{j}}(\psi, C 2^{\frac{1}{4}( j_1 + \cdots + j_n)}) \right) \\
    \cap \bigcap_{L \subset \{1, \dots, n\} }  \{ X \in \mathbb{R}^{n\times n}_{\mathrm{sym}} : X^{(L', L')}  \in \mathcal{X}^{(n-\# L)} \} \subset \mathbb{R}^{n\times n}_{\mathrm{sym}}.
  \end{multline}

  We now verify that with $\psi$ satisfying the conditions of theorems \ref{theorem:thetasumboundssmooth}, \ref{theorem:thetasumbounds}, $\mathcal{X}(\psi)$ has full measure, noting (again by induction on $n$) that it is enough to show that
  \begin{equation}
    \label{eq:settobound}
    \bigcup_{C > 0} \bigcap_{\bm{j} \geq 0} \mathcal{X}_{\bm{j}}(\psi, C 2^{\frac{1}{4}( j_1 + \cdots + j_n)})
  \end{equation}
  has full measure in $\mathbb{Z}^{n\times n}_{\mathrm{sym}} \backslash \mathbb{R}^{n\times n}_{\mathrm{sym}}$. 
  First we suppose that the Lebesgue measure of the complement of $\mathcal{X}_{\bm{j}}(\psi, C)$ is greater than some $\epsilon > 0$.
  Then, using the expression (\ref{eq:XATHaar}) for the Haar measure on $G$, we have
  \begin{equation}
    \label{eq:complementmeasure}
    \mu\left( \Gamma \backslash G - \mathcal{G}_{\bm{j}}(\psi, C) \right) \gg \epsilon,
  \end{equation}
  with implied constant depending only on $n$ and $\mathcal{K}$.
  From lemma \ref{lemma:thetajbound} it follows that
  \begin{equation}
    \label{eq:complementmeasurebound}
    \mathrm{meas} \left( \mathbb{Z}^{n\times n}_{\mathrm{sym}} \backslash \mathbb{R}^{n\times n}_{\mathrm{sym}} - \mathcal{X}_{\bm{j},}(\psi, C) \right) \ll C_{\psi} C^{-2n -2},
  \end{equation}
  and we find that
  \begin{multline}
    \label{eq:complementmeasurebound1}
    \mathrm{meas}\left( \mathbb{Z}^{n\times n}_{\mathrm{sym}} \backslash \mathbb{R}^{n\times n}_{\mathrm{sym}} - \bigcup_{C > 0} \bigcap_{\bm{j} \geq 0} \mathcal{X}_{\bm{j}}(\psi, C 2^{\frac{1}{4}(j_1 + \cdots + j_n)}) \right) \\
    \ll  \lim_{C \to \infty}  \sum_{\bm{j} \geq 0} C_\psi C^{-2n-2} 2^{- \frac{n+1}{2}( j_1 + \cdots + j_n)} = 0
  \end{multline}
  as required.

  Now let us suppose that $X \in \mathcal{X}(\psi)$, so in particular the coset $\mathbb{Z}^{n\times n}_{\mathrm{sym}} + X$ is in $\mathcal{X}_{\bm{j}}(\psi, C 2^{\frac{1}{4}(j_1 + \cdots + j_n)})$ for some $C > 0$ (independent of $\bm{j}$) and all $\bm{j}\geq 0$.  
  We have from corollary \ref{corollary:upperbound} and the definition of the height function $D$ that 
  \begin{equation}
    \label{eq:thetabound}
    \ll M^{\frac{n}{2}} \sum_{S \subset \{1, \dots, n\}}  \sum_{\substack{\bm{j} \geq 0 \\ 2^{j_i} b_{j_i}^{-1} \leq  M}}  2^{-\frac{1}{2}( j_1 + \cdots + j_n)} D( \Gamma g_{MB, X} g_{\bm{j}, S})^{\frac{1}{4}}
  \end{equation}
  bounds the first line of (\ref{eq:jsmalllarge}). 
  Now for all $\bm{j} \geq 0$ there is a $g \in \mathcal{G}_{\bm{j}}(\psi, C 2^{\frac{1}{4}(j_1 + \cdots + j_n)})$ having the form
  \begin{equation}
    \label{eq:gXAT}
    g =
    \begin{pmatrix}
      I & X \\
      0 & I
    \end{pmatrix}
    \begin{pmatrix}
      A & 0 \\
      0 & \transpose{\!A}^{-1} 
    \end{pmatrix}
    \begin{pmatrix}
      I & 0 \\
      T & I
    \end{pmatrix}
  \end{equation}
  with $|| A_{\bm{j}}^{-1} B A A_{\bm{j}} - I || \leq \epsilon_n$ and $|| T || \leq \epsilon_n$.
  We have
  \begin{multline}
    \label{eq:grearrange}
    g g_{\bm{j}, S}
    \begin{pmatrix}
      \frac{1}{M} I  & 0 \\
      0 & M I
    \end{pmatrix}
    \\
    = g_{MB, X} g_{\bm{j}, S}
    \begin{pmatrix}
      E_S A_{\bm{j}}^{-1} B A A_{\bm{j}} E_S  & 0 \\
      0 &  E_S A_{\bm{j}} B^{-1} \transpose{\!A}^{-1} A_{\bm{j}}^{-1} E_S 
    \end{pmatrix}
    \begin{pmatrix}
      I & 0 \\
      \frac{1}{M^2} A_{\bm{j}} T A_{\bm{j}} & I 
    \end{pmatrix}
    ,
  \end{multline}
  and so lemma \ref{lemma:heightcontinuity2} implies
  \begin{equation}
    \label{eq:DgDgMX}
    D( g_{MB, X} g_{\bm{j}, S}) \asymp D\left(  g g_{\bm{j}, S}
      \begin{pmatrix}
        \frac{1}{M} I & 0 \\
        0 & M I 
      \end{pmatrix}
\right) \ll C 2^{\frac{1}{4}(j_1 + \cdots + j_n)} \psi(\log M)
  \end{equation}
  since $g \in \mathcal{G}_{\bm{j}}(\psi, C 2^{\frac{1}{4}(j_1 + \cdots + j_n)})$ and $2^{j_i} \leq M b_i$ gives
  \begin{equation}
    \label{eq:AjTinequality}
    \frac{1}{M^2} || A_{\bm{j}} T A_{\bm{j}}  || \leq || T || \leq \epsilon_n. 
  \end{equation}
  It follows that (\ref{eq:thetabound}) is bounded by
  \begin{equation}
    \label{eq:thetabound1}
    \ll C M^{\frac{n}{2}}\psi(\log M) \sum_{\bm{j} \geq 0} 2^{- \frac{1}{4}(j_1 + \cdots + j_n)}  \ll C M^{\frac{n}{2}} \psi( \log M),
  \end{equation}
  and theorem \ref{theorem:thetasumbounds} follows.
\end{proof}

\bibliographystyle{plain}
\bibliography{references.bib}

\end{document}